\newcommand*{\mailto}[1]{\href{mailto:#1}{\nolinkurl{#1}}}
\newcommand{\arxiv}[1]{\href{http://arxiv.org/abs/#1}{arXiv:#1}}
\newcommand{\msc}[1]{\href{http://www.ams.org/msc/msc2010.html?t=&s=#1}{#1}}
\newtheorem{theorem}{Theorem}[section]
\newtheorem{lemma}[theorem]{Lemma}
\newtheorem{proposition}[theorem]{Proposition}
\newtheorem{corollary}[theorem]{Corollary}
\newtheorem{remark}[theorem]{Remark}
\newtheorem{example}{Example}[section]
\newcommand{\R}{{\mathbb R}}
\newcommand{\N}{{\mathbb N}}
\newcommand{\C}{{\mathbb C}}
\newcommand{\spr}[2]{\langle #1 , #2 \rangle}
\newcommand{\Qr}{\mathsf{q}}
\newcommand{\Vr}{\mathsf{v}}
\newcommand{\Wr}{\mathsf{w}}
\newcommand{\T}{\mathrm{T}}
\newcommand{\Sr}{\mathrm{s}}
\newcommand{\F}{\mathcal{F}}
\newcommand{\E}{\mathrm{e}}
\newcommand{\I}{\mathrm{i}}
\newcommand{\supp}{\mathrm{supp}}
\newcommand{\im}{\mathrm{Im}}
\newcommand{\re}{\mathrm{Re}}
\newcommand{\loc}{{\mathrm{loc}}}
\newcommand{\cc}{{\mathrm{c}}}
\newcommand{\OO}{\mathcal{O}}
\newcommand{\ledot}{\,\cdot\,}
\newcommand{\redot}{\cdot\,}
\newcommand{\NL}{(0-)}
\newcommand{\NLz}{(z,0-)}
\newcommand{\qd}{{[1]}}
\newcommand{\Hast}{\dot{H}^1[0,L)}
\newcommand{\Hasto}{\dot{H}^1_0[0,L)}
\newcommand{\cH}{\mathcal{H}}
\newcommand{\Lmu}{L^2(\R;\mu)}
\newcommand{\dip}{\upsilon}
\numberwithin{equation}{section}
\begin{document}

\title[Absolutely continuous spectrum]{On the absolutely continuous spectrum of~generalized~indefinite strings}
 
\author[J.\ Eckhardt]{Jonathan Eckhardt}
\address{Department of Mathematical Sciences\\ Loughborough University\\ Loughborough LE11 3TU \\ UK\\  and  Faculty of Mathematics\\ University of Vienna\\ Oskar-Morgenstern-Platz 1\\ 1090 Wien\\ Austria}
\email{\mailto{J.Eckhardt@lboro.ac.uk};\ \mailto{jonathan.eckhardt@univie.ac.at}}
\urladdr{\url{http://homepage.univie.ac.at/jonathan.eckhardt/}}

\author[A.\ Kostenko]{Aleksey Kostenko}
\address{Faculty of Mathematics and Physics\\ University of Ljubljana\\ Jadranska 19\\ 1000 Ljubljana\\ Slovenia\\ and Faculty of Mathematics\\ University of Vienna\\ Oskar-Morgenstern-Platz 1\\ 1090 Wien\\ Austria}
\email{\mailto{Aleksey.Kostenko@fmf.uni-lj.si};\ \mailto{Oleksiy.Kostenko@univie.ac.at}}
\urladdr{\url{http://www.mat.univie.ac.at/~kostenko/}}

\thanks{{\it Research supported by the Austrian Science Fund (FWF) under Grants No.~P29299 (J.E.) and P28807 (A.K.) as well as by the Slovenian Research Agency (ARRS) under Grant No.\ J1-9104 (A.K.)}}

\keywords{Absolutely continuous spectrum, generalized indefinite strings}
\subjclass[2010]{Primary \msc{34L05}, \msc{34B07}; Secondary \msc{34L25}, \msc{37K15}}

\begin{abstract}
We investigate absolutely continuous spectrum of generalized indefinite strings. 
By following an approach of Deift and Killip, we establish stability of the absolutely continuous spectra of two model examples of generalized indefinite strings under rather wide perturbations. 
In particular, one of these results allows us to prove that the absolutely continuous spectrum of the isospectral problem associated with the conservative Camassa--Holm flow in the dispersive regime is essentially supported on the interval $[1/4,\infty)$.
\end{abstract}

\maketitle

\section{Introduction}

It is well known that the study of spectral types of self-adjoint operators in Hilbert spaces is crucial for understanding the corresponding quantum dynamics. 
For this reason, even a huge amount of effort has been put in understanding only one of the most basic models -- one-body Schr\"odinger operators with (in a certain sense) decaying potentials. 
The standard framework to investigate such Schr\"odinger operators is perturbation theory, where one considers a given operator as an additive (or maybe singular, that is, in the sense of resolvents) perturbation of another operator (for example, the free Hamiltonian) whose spectral properties are rather well understood. 
According to the Aronszajn--Donoghue theory, the absolutely continuous spectrum of a self-adjoint operator is the most stable part and theorems by Rosenblum--Kato and Birman--Krein provide stability of the absolutely continuous spectrum under trace class perturbations. 
It turns out that these results are sharp since by the Weyl--von Neumann--Kuroda theorem, absolutely continuous spectrum may be turned  into pure point spectrum by a perturbation of any Schatten class weaker than trace class. 

On the other hand, by restricting to particular classes of perturbations (for Schr\"o\-din\-ger operators, the considered perturbations are usually multiplication operators) one can hope for better results. 
For example, in their seminal work \cite{deki99}, Deift and Killip proved that the absolutely continuous spectrum of a one-dimensional Schr\"odinger operator 
\begin{align}
\label{eqnHV}
  -\frac{d^2}{dx^2} + V(x)
\end{align}
acting in $L^2(\R)$ coincides with $[0,\infty)$ as long as $V$ is a real-valued potential in $L^2(\R)$ (see also \cite{hs14,ki02,mnv01,ry05} for further results and references). 
A similar statement for multi-dimensional operators is known as {\em Simon's conjecture} \cite[Conjecture~20.2]{sim17} and still open, although this area has seen significant progress in the 1990s and 2000s.   
We are not attempting to give an overview of this topic (which would be a rather difficult task) but only point to the review articles \cite{dk05} and \cite{sim00, sim18, sim17}, where further references can be found. 

Our aim here is to investigate spectral types, or more precisely, the absolutely continuous part of the spectrum of {\em generalized indefinite strings}, introduced recently in \cite{IndefiniteString}. 
We recall briefly (see Section~\ref{sec:prelim} for further details) that a generalized indefinite string is a triple $(L,\omega,\dip)$ such that $L\in(0,\infty]$, $\omega$ is a real-valued distribution in $H^{-1}_{\loc}[0,L)$ and $\dip$ is a non-negative Borel measure on $[0,L)$. 
 Associated with such a triple is the ordinary differential equation of the form  
  \begin{align}\label{eqnDEIndStr}
  -f'' = z\, \omega f + z^2 \dip f
 \end{align}
 on $[0,L)$, where $z$ is a complex spectral parameter. 
 Spectral problems of this type are of interest for at least two reasons. 
Firstly, they constitute a canonical model for operators with simple spectrum; see \cite{IndefiniteString, IndMoment}. 
Secondly, they are of relevance in connection with certain completely integrable nonlinear wave equations  (most prominently, the Camassa--Holm equation \cite{caho93}), for which these kinds of spectral problems  arise as isospectral problems.  
 
 Results on spectral types, even for the special case of a Krein string
   \begin{align}\label{eqnClaS}
  -f'' = z\, \omega f,
 \end{align}
that is, when $\omega$ is a non-negative Borel measure and $\dip$ vanishes identically, are rather scarce (however, let us mention the recent paper by Bessonov and Denisov \cite{bd17}). 
 The main reason for this lies in the fact that the spectral parameter appears in the {\em wrong} place, which does not allow to view~\eqref{eqnClaS} as an additive perturbation immediately. 
 One of the standard approaches here is to transform the differential equation~\eqref{eqnClaS} into Schr\"odinger form by means of a Liouville transformation and then apply the well-developed spectral theory for one-dimensional Schr\"odinger operators. 
 However, this immediately requires strictly positive and sufficiently smooth weights $\omega$, assumptions which are too restrictive for our applications. 
 In this context, let us also mention that spectral theory of {\em indefinite strings}, that is, when the coefficient $\omega$ in \eqref{eqnClaS} is a real-valued Borel measure, is significantly more intricate when compared to the case of Krein strings (see \cite{fl96,ko13} for example).

Our approach to the absolutely continuous spectrum of generalized indefinite strings follows the elegant ideas of Deift and Killip \cite{deki99}. 
In order to implement their approach, we need two main ingredients. 
The first ingredient is a continuity property for the correspondence between generalized indefinite strings and their associated Weyl--Titchmarsh functions  (see Section \ref{sec:prelim} for more details) obtained in \cite[Proposition~6.2]{IndefiniteString}. 
The second ingredient is a so-called {\em dispersion relation} or {\em trace formula}, which provides a relation between the spectral/scattering data and the coefficients in the differential equation, and hence allows to {\em control} the spectral measure by means of the coefficients. 
For Schr\"odinger operators \eqref{eqnHV}, such relations have been discovered by Faddeev and Zakharov \cite{faza71} (see also \cite{bf60}) in connection with the Korteweg--de Vries equation, where they give rise to conserved quantities. 
Generalized indefinite strings on the other side are of the same importance for the conservative Camassa--Holm flow \cite{chlizh06, coiv08, ConservCH, LagrangeCH, ConservMP, hoiv11} as the one-dimensional Schr\"odinger operator~\eqref{eqnHV} for the Korteweg--de Vries equation.
In particular, dispersion relations for~\eqref{eqnDEIndStr} are of special interest in this connection because they give rise to conserved quantities of the flow. 
Under the rather restrictive assumptions that $\dip$ vanishes identically and $\omega$ is a sufficiently smooth and positive function, such relations have been established before (see, for example, \cite{coiv06}). 
Removing the positivity assumption however immediately creates an (in general) infinite number of negative eigenvalues for \eqref{eqnDEIndStr} and drastically complicates the situation. 
For this reason, the derivation of corresponding dispersion relations for generalized indefinite strings will require more efforts (see Lemma \ref{lemTF0} and Lemma \ref{lemTF2}).

Our main result in regards to the conservative Camassa--Holm flow can be deduced readily from Theorem~\ref{thmApp} by standard arguments:

\begin{theorem}\label{thmCHac}
Let $u$ be a real-valued function on $\R$ such that $u-1$ belongs to $H^1(\R)$ and let $\dip$ be a non-negative finite Borel measure on $\R$.  
Then the essential spectrum of the spectral problem
\begin{align}\label{eq:CHspec}
 - g'' + \frac{1}{4}g & = z\,\omega\, g + z^2 \dip\, g,  & \omega & = u - u'',
\end{align}
 coincides with the interval $[1/4,\infty)$ and the absolutely continuous spectrum is of multiplicity two and essentially supported on $[1/4,\infty)$. 
\end{theorem}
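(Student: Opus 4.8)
The plan is to deduce the statement from Theorem~\ref{thmApp} by transforming the spectral problem~\eqref{eq:CHspec} into the canonical form~\eqref{eqnDEIndStr} of a generalized indefinite string. Since the potential on the left-hand side is the constant $\frac14$, the function $h(s)=\E^{-s/2}$ is a positive solution of $-h''+\tfrac14 h=0$ on all of $\R$, so the substitution $g=h\,f$ together with the change of variable $x=\int_0^s h(r)^{-2}\,dr=\E^{s}-1$ absorbs the term $\tfrac14 g$ into the geometry of the string and turns~\eqref{eq:CHspec}, on the ray $[0,\infty)$, into an equation $-f''=z\,\omega_+ f+z^2\dip_+ f$ on $[0,\infty)$ in the variable $x$; the analogous transformation with $h(s)=\E^{s/2}$ handles the ray $(-\infty,0]$. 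Crucially, the background geometry is the \emph{same} for every $u$ (it only sees the constant potential), so the free coefficient $u\equiv 1$ produces a fixed model string, and a general $u$ appears as a perturbation of it in the weight alone. In this way~\eqref{eq:CHspec} becomes unitarily equivalent (up to the change of variables) to an orthogonal sum of two generalized indefinite strings $(\infty,\omega_\pm,\dip_\pm)$, and the threshold $1/4$ corresponds to the exponentially structured model weight; this correspondence is the one recorded in \cite{IndefiniteString, ConservCH}.

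Next I would verify that the transformed coefficients meet the hypotheses of Theorem~\ref{thmApp}. Tracking the weak formulation through the transformation, the measures $\dip_\pm$ are the pushforwards of $h^2\dip$; since $h^2\le 1$ on each ray one has $\int h^2\,d\dip\le\dip(\R)<\infty$, so finiteness of $\dip$ passes to $\dip_\pm$. For the weights, the deviation of $\omega_\pm$ from the free model is governed by $\E^{-s}\bigl((u-1)-(u-1)''\bigr)$, and the claim to be checked is that the corresponding normalized antiderivative differs from that of the model by an $L^2$ function in the variable $x$. Establishing this $L^2$ bound from the assumption $u-1\in H^1(\R)$ — moving the two derivatives onto the decaying factor $\E^{-s}$ by integration by parts and controlling the resulting terms in $u-1$ and $(u-1)'$, despite $\omega=u-u''$ being only a distribution in $H^{-1}_{\loc}$ — is the technical heart of the argument and the step I expect to be the main obstacle.

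Granting this, Theorem~\ref{thmApp} applies to each string $(\infty,\omega_\pm,\dip_\pm)$ and shows that their absolutely continuous spectra are essentially supported on $[1/4,\infty)$, each with multiplicity one. Undoing the unitary equivalence and recombining the two rays, I would conclude that $\sigma_{\ac}$ of~\eqref{eq:CHspec} is essentially supported on $[1/4,\infty)$. The multiplicity is two because the full-line operator differs from the decoupled half-line operators (obtained by imposing a boundary condition at $s=0$) by a finite-rank perturbation in the resolvent sense: by the Kato--Rosenblum theorem the absolutely continuous parts are unitarily equivalent, so the two contributions of multiplicity one add up to multiplicity two.

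It remains to identify the essential spectrum. Since $u\to 1$ at both ends and $\dip$ is finite, the problem is a decaying perturbation of the free problem $-g''+\tfrac14 g=z\,g$, whose essential spectrum is $[1/4,\infty)$; a decomposition (Weyl sequence) argument near the two ends then yields $\sigma_{\ess}=[1/4,\infty)$. Combined with the inclusion $\sigma_{\ac}\subseteq\sigma_{\ess}=[1/4,\infty)$, this pins the essential support of the absolutely continuous spectrum to be exactly $[1/4,\infty)$ rather than a proper subset, completing the proof.
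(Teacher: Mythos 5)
Your proposal is correct and takes essentially the paper's own route: your substitution $g=hf$ with $h(s)=\E^{-s/2}$ and $x=\E^{s}-1$ is precisely the paper's transformation $f(t)=g(\Sr(t))\sqrt{1+t}$, $\Sr(t)=\log(1+t)$, from Section~\ref{secAPP}, and the full-line statement is then obtained, as in the paper, by applying the half-line result (Theorem~\ref{thmApp}) to both rays and decoupling at the origin through a finite-rank resolvent perturbation, which preserves the essential spectrum and, by Kato--Rosenblum, the absolutely continuous parts, yielding multiplicity two. One clarification on the step you single out as the main obstacle: it dissolves in the paper's formulation, because the transformation is carried out at the level of normalized anti-derivatives, so no second derivative of $u$ ever appears --- one computes explicitly $\Wr_\Sr(t)=u(0)-\frac{u(\Sr(t))+u'(\Sr(t))}{1+t}$, and the relevant hypothesis (note that what must be verified is the \emph{weighted} condition \eqref{eqnCondSalpha} of Theorem~\ref{thmalpha} with measure $x\,dx$, applied with $c=u(0)-1$, $\alpha=1/4$, $\eta=1$, not a plain $L^2$ bound) follows from the one-line substitution $\int_0^\infty \bigl|\Wr_\Sr(t)+1-u(0)-\tfrac{t}{1+t}\bigr|^2 t\,dt \leq \int_0^\infty |u(x)+u'(x)-1|^2dx < \infty$, together with $\int_{[0,\infty)} t\,d\dip_\Sr(t)\leq \dip[0,\infty)<\infty$; no integration by parts against $\E^{-s}(u-1)''$ is needed. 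Your closing Weyl-sequence argument is likewise superfluous, since the decoupled direct sum already has essential spectrum $[1/4,\infty)$ by Theorem~\ref{thmApp} and finite-rank resolvent perturbations leave the essential spectrum invariant.
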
  

\noindent 
In the special case when $\omega-1$ is a measure with strong enough decay and $\dip$ vanishes identically, this result has been established before by Bennewitz, Brown and Weikard \cite{bebrwe08, bebrwe12}. 
Under these additional assumptions the essential spectrum is in fact purely absolutely continuous, which will not continue to hold for our set of coefficients. 
Theorem~\ref{thmCHac} is relevant because it covers the entire natural phase space for the conservative Camassa--Holm flow in the dispersive regime (see \cite{brco07,grhora12, hora07}), which comprises the Camassa--Holm equation \cite{co01} as well  as the two-component Camassa--Holm system \cite{chlizh06, coiv08, hoiv11}. 
Note here that the additional coefficient $\dip$ is related to the energy measure $\mu$ from \cite{brco07,grhora12, hora07} in a simply way, essentially via 
\begin{align}\label{eq:CHmu}
  \mu(B) =  \dip(B) + \int_B  (u(x)-1)^2 + u'(x)^2 dx
\end{align}
for every Borel set $B\subseteq\R$. 
The function $\rho$ that appears in the formulation of the two-component Camassa--Holm system is simply the square root of the Radon--Nikodym derivative of the absolutely continuous part of the measure $\dip$. 

In conclusion, let us sketch the content of the article. 
Section~\ref{sec:prelim} is of preliminary character and collects necessary notions and facts from the spectral theory of generalized indefinite strings. 
Section \ref{secMR} contains the statements of our main results (Theorem~\ref{thm0} and Theorem~\ref{thmalpha}) about the absolutely continuous spectrum for certain classes of generalized indefinite strings, which are rather strong perturbations of explicitly solvable models (Example~\ref{exa0} and Example~\ref{exaalpha}).
Although we will not present the necessary details here, these perturbations can indeed be interpreted in a certain way as additive perturbations, which are only of Hilbert--Schmidt class in general however. 
Our main theorems seem to be new even for the special case of Krein strings and will be proved in Sections \ref{secPr1} and \ref{secPr2} respectively, after deriving some useful auxiliary facts in Section~\ref{secES}. 
 Even though we will generally follow the simple method by Deift and Killip from \cite{deki99}, the necessary ingredients are not as readily available for our spectral problem and need to be established first. 
In Section~\ref{secAPP}, we will then show how corresponding results for the spectral problem~\eqref{eq:CHspec} can be derived readily from Theorem~\ref{thmalpha}. 
The final section provides another application of our results to one-dimensional Hamiltonians with $\delta'$-interactions.

\section{Generalized indefinite strings}\label{sec:prelim} 

 We will first introduce several spaces of functions and distributions.  
 For every fixed $L\in(0,\infty]$, we denote with $H^1_{\loc}[0,L)$, $H^1[0,L)$ and $H^1_{\cc}[0,L)$ the usual Sobolev spaces. 
 To be precise, this means  
\begin{align}
H^1_{\loc}[0,L) & =  \lbrace f\in AC_{\loc}[0,L) \,|\, f'\in L^2_{\loc}[0,L) \rbrace, \\
 H^1[0,L) & = \lbrace f\in H^1_{\loc}[0,L) \,|\, f,\, f'\in L^2[0,L) \rbrace, \\ 
 H^1_{\cc}[0,L) & = \lbrace f\in H^1[0,L) \,|\, \supp(f) \text{ compact in } [0,L) \rbrace.
\end{align}
The space of distributions $H^{-1}_{\loc}[0,L)$ is the topological dual of $H^1_{\cc}[0,L)$. 
One notes that the mapping $\Qr\mapsto\chi$, defined by
 \begin{align}
    \chi(h) = - \int_0^L \Qr(x)h'(x)dx, \quad h\in H^1_{\cc}[0,L),
 \end{align} 
 establishes a one-to-one correspondence between $L^2_{\loc}[0,L)$ and $H^{-1}_{\loc}[0,L)$. 
The unique function $\Qr\in L^2_{\loc}[0,L)$ corresponding to some distribution $\chi\in H^{-1}_{\loc}[0,L)$ in this way will be referred to as {\em the normalized anti-derivative} of $\chi$.
 We say that a distribution in $H^{-1}_{\loc}[0,L)$ is real-valued if its normalized anti-derivative is real-valued almost everywhere on $[0,L)$. 

A particular kind of distribution in $H^{-1}_{\loc}[0,L)$ arises from Borel measures on the interval $[0,L)$.
 In fact, if $\chi$ is a complex-valued Borel measure on $[0,L)$, then we will identify it with the distribution in $H^{-1}_{\loc}[0,L)$ given by  
 \begin{align}
  h \mapsto \int_{[0,L)} h\,d\chi. 
 \end{align}
The normalized anti-derivative $\Qr$ of such a $\chi$ is simply given by the left-continuous distribution function 
 \begin{align}
 \Qr(x)=\int_{[0,x)}d\chi
 \end{align}
 for almost all $x\in [0,L)$, as an integration by parts (use, for example, \cite[Exercise~5.8.112]{bo07}, \cite[Theorem~21.67]{hest65}) shows.  

 In order to obtain a self-adjoint realization of the differential equation~\eqref{eqnDEIndStr} in a suitable Hilbert space later, we also introduce the function space  
\begin{align}
\Hast & = \begin{cases} \lbrace f\in H^1_{\loc}[0,L) \,|\, f'\in L^2[0,L),~ \lim_{x\rightarrow L} f(x) = 0 \rbrace, & L<\infty, \\ \lbrace f\in H^1_{\loc}[0,L) \,|\, f'\in L^2[0,L) \rbrace, & L=\infty, \end{cases} 
\end{align}
 as well as the linear subspace  
\begin{align}
 \Hasto & = \lbrace f\in \Hast \,|\, f(0) = 0 \rbrace, 
\end{align}
 which turns into a Hilbert space when endowed with the scalar product 
 \begin{align}\label{eq:normti}
 \spr{f}{g}_{\Hasto} = \int_0^L f'(x) g'(x)^\ast dx, \quad f,\, g\in\Hasto.
 \end{align}
Here and henceforth, we will use a star to denote complex conjugation. 
The space $\Hasto$ can be viewed as a completion with respect to the norm induced by~\eqref{eq:normti} of the space of all smooth functions which have compact support in $(0,L)$. 
In particular, the space $\Hasto$ coincides algebraically and topologically with the usual Sobolev space $H^1_0[0,L)$ when $L$ is finite. 

 A generalized indefinite string is a triple $(L,\omega,\dip)$ such that $L\in(0,\infty]$, $\omega$ is a real-valued distribution in $H^{-1}_{\loc}[0,L)$ and $\dip$ is a non-negative Borel measure on the interval $[0,L)$.  
 Associated with such a generalized indefinite string is the inhomogeneous differential equation
 \begin{align}\label{eqnDEinho}
  -f''  = z\, \omega f + z^2 \dip f + \chi, 
 \end{align}
 where $\chi$ is a distribution in $H^{-1}_{\loc}[0,L)$ and $z$ is a complex spectral parameter. 
 Of course, this differential equation has to be understood in a weak sense:   
  A solution of~\eqref{eqnDEinho} is a function $f\in H^1_{\loc}[0,L)$ such that 
 \begin{align}
  \Delta_f h(0) + \int_{0}^L f'(x) h'(x) dx = z\, \omega(fh) + z^2 \dip(fh) +  \chi(h), \quad h\in H^1_{\cc}[0,L),
 \end{align}
 for some constant $\Delta_f\in\C$. 
 In this case, the constant $\Delta_f$ is uniquely determined and will always be denoted with $f'\NL$ for apparent reasons. 
 With this notion of solution, common basic existence and uniqueness results for the inhomogeneous differential equation~\eqref{eqnDEinho} are available; see \cite[Section~3]{IndefiniteString}. 

 The differential equation~\eqref{eqnDEinho} gives rise to a self-adjoint linear relation in a suitable Hilbert space. 
 In order to introduce this object, we consider the space 
 \begin{align}
 \cH = \Hasto\times L^2([0,L);\dip),
\end{align}
which turns into a Hilbert space when endowed with the scalar product
\begin{align}
 \spr{f}{g}_{\cH} = \int_0^L f_1'(x) g_1'(x)^\ast dx + \int_{[0,L)} f_2(x) g_2(x)^\ast d\dip(x), \quad f,\, g\in \cH.
\end{align} 
 The respective components of some vector $f\in\cH$ are hereby always denoted by adding subscripts, that is, with $f_1$ and $f_2$.  
Now the linear relation $\T$ in the Hilbert space $\cH$ is defined by saying that some pair $(f,g)\in\cH\times\cH$ belongs to $\T$ if and only if  the two equations 
\begin{align}\label{eqnDEre1}
-f_1'' & =\omega g_{1} + \dip g_{2}, &  \dip f_2 & =\dip g_{1},
\end{align}
hold. 
In order to be precise, the right-hand side of the first equation in~\eqref{eqnDEre1} has to be understood as the $H^{-1}_{\loc}[0,L)$ distribution given by 
\begin{align}
 h \mapsto \omega(g_1h) + \int_{[0,L)} g_2 h\, d\dip. 
\end{align} 
 Moreover, the second equation in~\eqref{eqnDEre1} holds if and only if $f_2$ is equal to $g_1$ almost everywhere on $[0,L)$ with respect to the measure $\dip$. 
  The linear relation $\T$ turns out to be self-adjoint in the Hilbert space $\cH$; see \cite[Theorem~4.1]{IndefiniteString}.
 
   A central object in the spectral theory for the linear relation $\T$ is the associated {\em Weyl--Titchmarsh function} $m$. 
   This function can be defined on $\C\backslash\R$ by 
 \begin{align}\label{eqnmdef}
  m(z) =  \frac{\psi'\NLz}{z\psi(z,0)},\quad z\in\C\backslash\R,
 \end{align} 
 where $\psi(z,\redot)$ is the unique (up to a constant multiples) non-trivial solution of the homogeneous differential equation
  \begin{align}\label{eqnDEho}
  -f'' = z\, \omega f + z^2 \dip f
 \end{align}
 which lies in $\Hast$ and $L^2([0,L);\dip)$, guaranteed to exist by \cite[Lemma~4.2]{IndefiniteString}. 
 It has been shown in \cite[Lemma~5.1]{IndefiniteString} that the Weyl--Titchmarsh function $m$ is a Herglotz--Nevanlinna function, that is, it is analytic, maps the upper complex half-plane $\C_+$ into the closure of the upper complex half-plane and satisfies the symmetry relation
 \begin{align}\label{eqnHNsym}
  m(z)^\ast = m(z^\ast), \quad z\in\C\backslash\R. 
 \end{align}
For this reason, the Weyl--Titchmarsh function $m$ admits an integral representation, which takes the form 
\begin{align}\label{eqnWTmIntRep}
 m(z) = c_1 z + c_2 - \frac{1}{Lz} +  \int_\R \frac{1}{\lambda-z} - \frac{\lambda}{1+\lambda^2}\, d\mu(\lambda), \quad z\in\C\backslash\R, 
\end{align}
for some constants $c_1$, $c_2\in\R$ with $c_1\geq0$ and a non-negative Borel measure $\mu$ on $\R$ with $\mu(\lbrace0\rbrace)=0$ for which the integral  
\begin{align}
 \int_\R \frac{d\mu(\lambda)}{1+\lambda^2} 
\end{align}
is finite. 
Here we employ the convention that whenever an $L$ appears in a denominator, the corresponding fraction has to be interpreted as zero if $L$ is infinite.

The measure $\mu$ turns out to be a {\em spectral measure} for the linear relation $\T$ in the sense that the operator part of $\T$ is unitarily equivalent to multiplication with the independent variable in $\Lmu$; see \cite[Theorem~5.8]{IndefiniteString}.
 Of course, this establishes an immediate connection between the spectral properties of the linear relation $\T$ and the measure $\mu$. 
 For example, the spectrum of $\T$ coincides with the topological support of $\mu$ and thus can be read off the singularities of $m$ (more precisely, the function $m$ admits an analytic continuation away from the spectrum of $\T$). 
 
 For the sake of simplicity, we shall always mean the spectrum of the corresponding linear relation when we speak of the spectrum of a generalized indefinite string in the following.
 The same convention applies to the various spectral types.

 \section{Absolutely continuous spectrum}\label{secMR}

  In general, any kind of (simple) spectrum can arise from a generalized indefinite string; see \cite[Theorem~6.1]{IndefiniteString}.
  Here, we are interested in the absolutely continuous spectrum of a particular class of generalized indefinite strings, which are suitable perturbations of the following explicitly solvable case. 
  
  \begin{example}\label{exa0}
   Let $S_0$ be the generalized indefinite string $(L_0,\omega_0,\dip_0)$ such that $L_0$ is infinite, the distribution $\omega_0$ is given via its normalized anti-derivative $\Wr_0$ by 
   \begin{align}
     \Wr_0(x) = x, \quad x\in[0,\infty),
   \end{align}
       and the measure $\dip_0$ vanishes identically.
       We note that under these assumptions, the corresponding differential equation~\eqref{eqnDEho} simply reduces to 
       \begin{align}
       - f'' = zf.
       \end{align}
       For every $z\in\C\backslash[0,\infty)$, the function $\psi_0(z,\redot)$  given by\footnote{In the following, we will always take the branch of the square root $\sqrt{\cdot}$ with cut along the positive semi-axis $[0,\infty)$ defined by $\sqrt{z}=\sqrt{|z|}\E^{\I \arg(z)/2}$ with $\arg(z)\in [0,2\pi)$.}
    \begin{align}
      \psi_0(z,x) = \E^{\I\sqrt{z}x}, \quad x\in[0,\infty),
    \end{align}   
    is a solution of this differential equation which lies in $\dot{H}^1[0,\infty)$.
   Consequently,  the corresponding Weyl--Titchmarsh function $m_0$ is given explicitly by 
    \begin{align}
      m_0(z) = \frac{\psi_0'\NLz}{z\psi_0(z,0)} = \frac{\I}{\sqrt{z}}, \quad z\in\C\backslash\R.
    \end{align}
    This guarantees that the spectrum of $S_0$ is purely absolutely continuous and coincides with the interval $[0,\infty)$. 
  \end{example}
  
 In particular, the essential spectrum of $S_0$ coincides with the interval $[0,\infty)$ and the absolutely continuous spectrum of $S_0$ is essentially supported on $[0,\infty)$.
 The latter means that every subset of $[0,\infty)$ with positive Lebesgue measure has positive measure with respect to the corresponding spectral measure $\mu_0$.
  It turns out that these two properties continue to hold under a rather wide class of perturbations. 
  
    \begin{theorem}\label{thm0}
    Let $S$ be a generalized indefinite string $(L,\omega,\dip)$ such that  $L$ is infinite and 
    \begin{align}\label{eqnCondS0}
      \int_0^\infty \left| \Wr(x) - c - \eta x\right|^2 dx  +  \int_{[0,\infty)}  d\dip& < \infty
    \end{align}
    for a real constant $c$ and a positive constant $\eta$, where $\Wr$ is the normalized anti-derivative of $\omega$.
    Then the essential spectrum of $S$ coincides with the interval $[0,\infty)$ and the absolutely continuous spectrum of $S$ is essentially supported on $[0,\infty)$. 
    \end{theorem}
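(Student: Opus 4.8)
The plan is to follow the strategy of Deift and Killip~\cite{deki99}, reducing both assertions to the behaviour of the boundary values $\operatorname{Im} m(\lambda+\I 0)$ of the Weyl--Titchmarsh function on the positive half-axis. Before anything else I would dispose of the parameters $c$ and $\eta$ by comparison with the explicitly solvable string $S_\eta$ whose normalized anti-derivative is $x\mapsto\eta x$ (so that $S_1=S_0$ from Example~\ref{exa0}). Since $S_\eta$ corresponds to $\omega_\eta=\eta\,dx$, one computes $m_\eta=\sqrt{\eta}\,m_0$, so that $S_\eta$ has the same purely absolutely continuous spectrum $[0,\infty)$ as $S_0$, with reference density $\operatorname{Im} m_\eta(\lambda+\I 0)=\sqrt{\eta}/\sqrt{\lambda}$ for $\lambda>0$. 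The constant $c$ is harmless: a shift of $\Wr$ by $c$ amounts to adding the point mass $c\,\delta_0$ at the origin to $\omega$, a boundary perturbation that merely subtracts the real constant $c$ from $m$ and hence leaves $\operatorname{Im} m$ unchanged. It thus remains to prove that $\sigma_{\ess}(S)=[0,\infty)$ and that $\operatorname{Im} m(\lambda+\I 0)>0$ for Lebesgue-almost every $\lambda>0$, the latter being equivalent to the stated essential support because the density of the absolutely continuous part of the spectral measure $\mu$ equals $\tfrac1\pi\operatorname{Im} m(\lambda+\I 0)$.

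For the essential spectrum I would exploit that \eqref{eqnCondS0} forces the coefficients of $S$ to approach those of $S_\eta$ at $+\infty$: the deviation $\Wr-c-\eta x$ lies in $L^2[0,\infty)$ and $\dip$ is finite, so $S$ differs from $S_\eta$ only by a perturbation that is concentrated, in an $L^2$ sense, near the origin. Since the essential spectrum of a generalized indefinite string is governed by the tail of its coefficients, I expect $\sigma_{\ess}(S)=\sigma_{\ess}(S_\eta)=[0,\infty)$; to make this rigorous I would show that the resolvents of the self-adjoint relation $\T$ and of the relation associated with $S_\eta$ differ by a compact operator (using the auxiliary estimates of Section~\ref{secES}) and invoke Weyl's theorem on the invariance of the essential spectrum.

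The core of the argument is a trace formula, the analogue of the Faddeev--Zakharov dispersion relation, which controls the finite quantity in \eqref{eqnCondS0} by the spectral data. Writing $K$ for the left-hand side of \eqref{eqnCondS0}, I would aim to establish an identity of the form
\begin{align*}
 \frac1\pi\int_0^\infty \log\!\left(\frac{\operatorname{Im} m_\eta(\lambda+\I 0)}{\operatorname{Im} m(\lambda+\I 0)}\right)\sqrt{\lambda}\,d\lambda \;+\; (\text{eigenvalue contributions}) \;=\; C\,K,
\end{align*}
with a positive constant $C$, where the eigenvalue contributions are non-negative and, by a relative-entropy (Jensen) estimate, so is the entropy integral on the left. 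The weight $\sqrt{\lambda}$ is dictated by the substitution $z=k^2$, exactly as the weight $k^2$ appears in the $\int V^2$ sum rule for Schr\"odinger operators. Granting this identity, both non-negative terms on the left are bounded by $CK<\infty$; in particular the entropy integral is finite, which is impossible if $\operatorname{Im} m(\lambda+\I 0)$ were to vanish on a set of positive Lebesgue measure. Hence $\operatorname{Im} m(\lambda+\I 0)>0$ almost everywhere on $(0,\infty)$, which is precisely the desired essential support.

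The main obstacle is the rigorous derivation of this trace formula, and it is genuinely harder here than for Schr\"odinger operators for two reasons: the coefficient $\omega$ is only a distribution, so there is no classical potential to expand at high energy, and abandoning positivity of $\omega$ produces in general infinitely many negative eigenvalues, whose contributions to the sum rule must be shown to converge with the correct sign. My strategy would be to first prove the identity for a dense, regularized class of strings --- with smooth, strictly positive weights and with $\Wr-c-\eta x$ of compact support --- where $\sqrt{z}\,m(z)/\I$ admits a genuine Jost-type factorization and $\log m$ may be expanded term by term at high energy to read off $K$. I would then pass to the general case by approximation, using the continuity of the correspondence between generalized indefinite strings and their Weyl--Titchmarsh functions from \cite[Proposition~6.2]{IndefiniteString} together with the weak lower semicontinuity of the entropy integral: if $S_n\to S$ with $K_n\to K$, the regularized identities give a uniform bound $\frac1\pi\int_0^\infty\log(\operatorname{Im} m_{\eta}/\operatorname{Im} m_n)\sqrt\lambda\,d\lambda\le CK_n$, and lower semicontinuity then yields the same bound in the limit, which is all that the conclusion of the previous paragraph requires.
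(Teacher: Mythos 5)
Your global scheme---scaling away $c$ and $\eta$ through the exact relation $m(z)=\eta\,\tilde m(\eta z)+c$, then a dispersion relation whose eigenvalue and entropy terms are non-negative, Jensen's inequality, the continuity of the string--Weyl-function correspondence from \cite[Proposition~6.2]{IndefiniteString}, and a passage to the limit---is precisely the paper's. But one of your concrete steps fails: your sum rule is anchored at the wrong end of the spectrum. The quantity $K$ in \eqref{eqnCondS0} is read off at $z=0$, not at high energy: using the derivatives of the fundamental system at the origin (Proposition~\ref{propFS}) one finds $a(k)=1-\frac{\I}{2}Kk^{3}+\OO(k^{4})$ as $k\to0$, and the paper's trace formula (Lemma~\ref{lemTF0}) reads
\begin{align*}
\frac{4}{3}\sum_{n=1}^N \frac{1}{\kappa_n^{3}}+\frac{2}{\pi}\int_\R \frac{1}{k^{4}}\log|a(k)|\,dk=K,
\end{align*}
so the resulting entropy estimate (Corollary~\ref{corACest}) carries the weight $\sqrt\lambda/\lambda^{3}$, which is integrable at infinity. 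Your weight $\sqrt\lambda$ transplants the high-energy $\int V^2$ rule of Deift--Killip, but for a string there is no potential to expand: the $|z|\to\infty$ asymptotics of $m$ are governed by the local behavior of $\Wr$ and $\dip$ near $x=0$, which \eqref{eqnCondS0} does not constrain beyond $L^2$, so ``expanding $\log m$ term by term at high energy to read off $K$'' is impossible, and a $\sqrt\lambda$-weighted entropy integral cannot be bounded by $K$ (it would require decay of $\im\, m(\lambda+\I 0)-\sqrt{\eta}/\sqrt{\lambda}$ at high energy that the hypothesis simply does not provide).

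Second, your regularized class is not dense. Strings with smooth, strictly positive weights are Krein strings: their anti-derivatives are non-decreasing (and your class omits $\dip$ altogether), whereas a general $\Wr$ satisfying \eqref{eqnCondS0} is merely an $L^2$ perturbation of $\eta x$ and may fail to be monotone on every interval, so no sequence from your class converges to such an $S$ in the $L^2_{\loc}$ sense that \cite[Proposition~6.2]{IndefiniteString} requires. The paper's class $\F_0$ retains the indefiniteness---$\Wr$ piecewise constant with arbitrary real values, $\dip$ supported on a finite set---so that $a$ is still explicitly a polynomial with finitely many, purely imaginary zeros, and the same trace formula simultaneously yields the Lieb--Thirring bound of Corollary~\ref{corEVest}. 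That bound, combined with the vague convergence of the spectral measures, is what excludes essential spectrum in $(-\infty,0)$; your alternative---a compact resolvent difference plus Weyl's theorem---is unsubstantiated: the two relations act in different Hilbert spaces (the measure $\dip$ enters the space $\cH$ itself), and the paper's remark that the perturbations are ``only of Hilbert--Schmidt class in general'' refers to an additive interpretation it deliberately does not develop. Working with positive approximating weights would moreover leave you with no control over the (possibly infinitely many) negative eigenvalues of the limit string, the very difficulty you correctly identified at the outset.
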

    
    A proof for this result will be given in Section~\ref{secPr1}. 
        In view of the applications we have in mind (see Section~\ref{secAPP}), we are furthermore interested in perturbations of another explicitly solvable case involving a positive parameter $\alpha$.
  
  \begin{example}\label{exaalpha}
    Let $S_\alpha$ be the generalized indefinite string $(L_\alpha,\omega_\alpha,\dip_\alpha)$ such that $L_\alpha$ is infinite, the distribution $\omega_\alpha$ is given via its normalized anti-derivative $\Wr_\alpha$ by   
    \begin{align}
      \Wr_\alpha(x) = \frac{x}{1+2\sqrt{\alpha}x}, \quad x\in[0,\infty),
    \end{align} 
    and the measure $\dip_\alpha$ vanishes identically. 
    We note that under these assumptions, the corresponding differential equation~\eqref{eqnDEho} simply reduces to 
    \begin{align}\label{eq:SP2}
    - f''(x) = \frac{z}{(1+2\sqrt{\alpha}x)^2}  f(x),\quad x\in[0,\infty).
    \end{align}
    For every $z\in\C\backslash[\alpha,\infty)$, the function $\psi_\alpha(z,\redot)$ given by 
    \begin{align}\label{eq:psiC}
      \psi_\alpha(z,x) = (1+2\sqrt{\alpha}x)^{\I\frac{\sqrt{z-\alpha}}{2\sqrt{\alpha}} + \frac{1}{2}}, \quad x\in[0,\infty),
    \end{align}   
    is a solution of this differential equation which lies in $\dot{H}^1[0,\infty)$.
    Consequently, the corresponding Weyl--Titchmarsh function $m_\alpha$ is given explicitly by  
    \begin{align}
      m_\alpha(z) = \frac{\psi_\alpha'\NLz}{z\psi_\alpha(z,0)} = \frac{\I}{\sqrt{z-\alpha}+\I\sqrt{\alpha}}, \quad z\in\C\backslash\R.
    \end{align}
    This guarantees that the spectrum of $S_\alpha$ is purely absolutely continuous and coincides with the interval $[\alpha,\infty)$. 
  \end{example}
  
  In particular, the essential spectrum of $S_\alpha$ coincides with the interval $[\alpha,\infty)$ and the absolutely continuous spectrum of $S_\alpha$ is essentially supported on $[\alpha,\infty)$.
  These two properties are again preserved under a rather wide class of perturbations. 
  
  \begin{theorem}\label{thmalpha}
    Let $S$ be a generalized indefinite string $(L,\omega,\dip)$ such that  $L$ is infinite and 
    \begin{align}\label{eqnCondSalpha}
      \int_0^\infty   \Bigl| \Wr(x) - c - \frac{\eta x}{1+2\sqrt{\alpha}x}\Bigr|^2 x\, dx + \int_{[0,\infty)}  x\, d\dip(x) & < \infty
    \end{align}
    for a real constant $c$ and positive constants $\alpha$ and $\eta$, where $\Wr$ is the normalized anti-derivative of $\omega$.
    Then the essential spectrum of $S$ coincides with the interval $[\alpha/\eta,\infty)$ and the absolutely continuous spectrum of $S$ is essentially supported on $[\alpha/\eta,\infty)$. 
  \end{theorem}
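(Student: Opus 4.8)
The plan is to follow the Deift--Killip strategy exactly as for Theorem~\ref{thm0}, now measuring the perturbation against the explicitly solvable string $S_\alpha$ from Example~\ref{exaalpha}. The appearance of the threshold $\alpha/\eta$ is explained by a simple scaling observation: the reference coefficient $c + \eta x/(1+2\sqrt{\alpha}\,x)$ differs from $\Wr_\alpha$ only by the additive constant $c$, which is irrelevant for $\omega$, and by the factor $\eta$, which amounts to replacing the spectral parameter $z$ by $\eta z$ in~\eqref{eqnDEho}; since the spectrum of $S_\alpha$ is $[\alpha,\infty)$, the corresponding threshold for $S$ becomes $\alpha/\eta$. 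Granting the auxiliary facts of Section~\ref{secES}, the weighted conditions in~\eqref{eqnCondSalpha} guarantee that $S$ is a mild enough (resolvent-compact) perturbation of this reference string for Weyl's theorem to yield that the essential spectrum $\sigma_{\ess}(S)$ equals $[\alpha/\eta,\infty)$.

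The heart of the matter is a dispersion relation of the type furnished by Lemma~\ref{lemTF2}, which ties the finite coefficient norm in~\eqref{eqnCondSalpha} to the spectral data of $S$. Comparing $m$ with the model function $m_\alpha$ and analysing a suitable moment of $\log(m/m_\alpha)$ as $z$ approaches the branch point, one expects to arrive at a trace formula whose absolutely continuous contribution is an entropy integral over $[\alpha/\eta,\infty)$ and whose discrete contribution, coming from the negative spectrum, is nonnegative. Discarding the latter leaves an inequality of the schematic form
\[
\int_{\alpha/\eta}^\infty \log^{-}\!\bigl( \im m(\lambda + \I 0) \bigr)\, w(\lambda)\, d\lambda \le \kappa \left( \int_0^\infty \Bigl| \Wr(x) - c - \frac{\eta x}{1+2\sqrt{\alpha}\,x} \Bigr|^2 x\, dx + \int_{[0,\infty)} x\, d\dip(x) \right) + C,
\]
where $\log^{-}(t) = \max(-\log t, 0)$, the weight $w$ is positive on $(\alpha/\eta,\infty)$, and $\kappa$, $C$ are finite constants depending only on $\alpha$ and $\eta$.

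Granting such an estimate, the conclusion is immediate. Since the right-hand side is finite, $\im m(\lambda+\I0)$ cannot vanish on a subset of $(\alpha/\eta,\infty)$ of positive Lebesgue measure, so that $\im m(\lambda+\I0) > 0$ for almost every $\lambda > \alpha/\eta$. Because the density of the absolutely continuous part of the spectral measure $\mu$ is $\pi^{-1}\im m(\redot+\I0)$, every subset of $[\alpha/\eta,\infty)$ of positive Lebesgue measure then carries positive $\mu$-mass, which is precisely the assertion that the absolutely continuous spectrum of $S$ is essentially supported on $[\alpha/\eta,\infty)$. Since the trace formula is most naturally derived first for regularized coefficients, I would finally remove any such restriction by truncation, passing to the limit via the continuity of the correspondence $S\mapsto m$ from \cite[Proposition~6.2]{IndefiniteString} and exploiting the upper semicontinuity of the entropy integral under the ensuing weak convergence of spectral measures.

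The main obstacle is exactly the one flagged in the introduction: in the absence of a positivity assumption on $\omega$ the spectral problem carries a possibly infinite family of negative eigenvalues, and the real work lies in establishing the dispersion relation of Lemma~\ref{lemTF2} with these discrete contributions appearing with a definite, nonnegative sign, so that they may be safely dropped from the trace formula.
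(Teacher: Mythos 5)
Your roadmap matches the paper's in outline (approximation by a class of explicitly computable strings, trace formulas for those, a Lieb--Thirring-type bound plus an entropy bound, passage to the limit via \cite[Proposition~6.2]{IndefiniteString} and Jensen's inequality, and the final rescaling through $m(z)=\eta\,\tilde{m}(\eta z)+c$ as in \eqref{eqnmtilde}), but it has a genuine gap in the essential-spectrum step. You assert that \eqref{eqnCondSalpha} makes $S$ a resolvent-compact perturbation of the model string, so that Weyl's theorem gives $\sigma_{\ess}(S)=[\alpha/\eta,\infty)$. Nothing in the paper, and nothing in your sketch, supports this: the two self-adjoint relations act in \emph{different} Hilbert spaces ($\cH$ contains the component $L^2([0,\infty);\dip)$, and the model has $\dip_\alpha\equiv 0$), so there is no operator difference to which Weyl's theorem applies directly, and the paper explicitly remarks that the additive-perturbation interpretation, when it can be set up at all, is only Hilbert--Schmidt and is \emph{not} developed there. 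The actual proof obtains $\sigma_{\ess}(S)\subseteq[\alpha,\infty)$ with no compactness input: Corollary~\ref{cor:LTalpha} bounds, uniformly along the approximating sequence in $\F_\alpha$, the eigenvalues in any compact $I\subset(-\infty,\alpha)$, and the weak convergence \eqref{eqnmuconvalpha} of the spectral measures then forces the limit measure to be supported on a finite set in $I$. Without this (or an actual proof of relative compactness), your argument for the inclusion $\sigma_{\ess}(S)\subseteq[\alpha/\eta,\infty)$ does not close.

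The second issue is the sign of the discrete contribution, which you correctly flag as ``the real work'' but describe too optimistically. It is not a matter of the \emph{negative} spectrum entering with a nonnegative sign: for threshold $\alpha>0$ the perturbation creates eigenvalues both in $(-\infty,0)$ and in $(0,\alpha)$, the zeros $\I\kappa_n$ of $a$ lie on both sides of $\I\sqrt{\alpha}$, and in the key identity \eqref{eq:trace02alpha} the discrete terms include the \emph{negative} quantities $\log\bigl|(\kappa_n-\sqrt{\alpha})/(\kappa_n+\sqrt{\alpha})\bigr|$. Nonnegativity holds only for the combination, which the paper identifies as $\tfrac{1}{4\alpha^{3/2}}\sum_n F(\kappa_n/\sqrt{\alpha})$ with $F$ as in \eqref{eq:funtrace}; proving $F>0$ together with the quantitative bound $F(s)\geq 16/(3s^3)$ (which, via the interlacing of the $\kappa_n$ with the points $\sqrt{\alpha-\lambda_i^{\pm}}$, also yields Corollary~\ref{cor:LTalpha}) is precisely the content your sketch defers. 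One further detail worth correcting: the paper does not analyze $\log(m/m_\alpha)$ ``as $z$ approaches the branch point''; since $a$ may have a simple pole at $k=0$ (i.e.\ at $z=\alpha$), the expansion there is unavailable, and instead one normalizes $a(\I\sqrt{\alpha})=1$ and differentiates the Nevanlinna factorization at $k=\I\sqrt{\alpha}$, that is, at the interior point $z=0$ --- in contrast to the proof of Theorem~\ref{thm0}, where the expansion is at the branch point and requires the extra asymptotics $\log|a(k)|=\OO(k^4)$.
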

  
   Although the proof of this result is quite similar to the one for Theorem~\ref{thm0} in principle, it will be carried out separately in Section~\ref{secPr2} due to differences in details.   
  
     \begin{remark}
   When the constant $\eta$ in the assumptions of Theorem~\ref{thm0} and Theorem~\ref{thmalpha} is negative, then the resulting spectral picture is simply reflected across the imaginary axis.  
     In this case, the essential spectrum of  $S$ coincides with the interval $(-\infty,0]$ and  $(-\infty,\alpha/\eta]$, respectively, and the absolutely continuous spectrum of $S$ is essentially supported on $(-\infty,0]$ and $(-\infty,\alpha/\eta]$, respectively. 
    \end{remark}
    
  As a conclusion to this section, let us mention that although we restricted to generalized indefinite strings on infinite intervals here, one can also consider perturbations of similar explicitly solvable cases on finite intervals.

 \section{Auxiliary facts about fundamental systems}\label{secES}

 Let the triple $(L,\omega,\dip)$ be an arbitrary generalized indefinite string and denote with $\Wr$ the normalized anti-derivative of $\omega$. 
 For every $z\in\C$, we introduce the fundamental system of solutions $\theta(z,\redot)$, $\phi(z,\redot)$ of the differential equation~\eqref{eqnDEho} satisfying the initial conditions
 \begin{align}
  \theta(z,0)& = \phi'\NLz =1, &  \theta'\NLz & = \phi(z,0) =0.
 \end{align}
 As the derivatives of these functions are only locally square integrable in general, we introduce the left-continuous {\em quasi-derivatives} $\theta^\qd(z,\redot)$, $\phi^\qd(z,\redot)$ on $[0,L)$ by 
 \begin{align}
 \label{eqnQDtheta} \theta^\qd(z,x) & = \theta'\NLz + z\int_0^x \Wr(t)\theta'(z,t)dt - z^2 \int_{[0,x)} \theta(z,t)d\dip(t), \\
  \label{eqnQDphi} \phi^\qd(z,x) & = \phi'\NLz + z\int_0^x \Wr(t)\phi'(z,t)dt - z^2 \int_{[0,x)} \phi(z,t)d\dip(t),
  \end{align}
 for $x\in[0,L)$, such that    
 \begin{align}\label{eqnQD}
   \theta^\qd(z,x) & = \theta'(z,x) + z\Wr(x)\theta(z,x), & \phi^\qd(z,x) & = \phi'(z,x) + z\Wr(x)\phi(z,x),
 \end{align}
 for almost all $x\in[0,L)$;  see \cite[Equation~(4.12)]{IndefiniteString}.
  It follows from \cite[Corollary~3.5]{IndefiniteString} that the functions 
   \begin{align}
   z & \mapsto \theta(z,x), & z & \mapsto \theta^\qd(z,x), & z & \mapsto \phi(z,x), & z & \mapsto \phi^\qd(z,x), 
 \end{align}
 are real entire for every fixed $x\in[0,L)$.
  At the origin, when $z$ is zero, one readily infers that our fundamental system is given explicitly by
 \begin{align}\label{eqncsatzero}
   \theta(0,x) & = 1, & \theta^\qd(0,x) & = 0, & \phi(0,x) & = x, & \phi^\qd(0,x) & = 1, 
 \end{align}
 for all $x\in[0,L)$. 
 We now seek to determine the derivatives of these functions with respect to the spectral parameter at the origin. 
 In order to state the following result, let us note that differentiation with respect to the spectral parameter will be denoted with a dot and is always meant to be done after taking quasi-derivatives.
 
 \begin{proposition}\label{propFS}
  For every $x\in[0,L)$, we have   
  \begin{align}
    \label{eqndtheta0} \dot{\theta}(0,x) & =  - \int_0^x \Wr(t)dt, &   \dot{\theta}^\qd(0,x) & =  0, \\
    \label{eqndphi0} \dot{\phi}(0,x) & = \int_0^x \int_0^t \Wr(s)ds\, dt - \int_0^x \Wr(t) t\, dt, &   \dot{\phi}^\qd(0,x) & = \int_0^x \Wr(t)dt,
  \end{align}
  as well as  
  \begin{align}
    \label{eqnddtheta0}  \ddot{\theta}(0,x) & = \biggl(\int_0^x \Wr(t)dt\biggr)^2 - 2 \int_0^x \int_0^t \Wr(s)^2 ds\,dt - 2 \int_0^x \int_{[0,t)} d\dip\,dt,                                                   \\
    \label{eqnddthetap0} \ddot{\theta}^\qd(0,x) & = -2 \int_0^x \Wr(t)^2 dt -2 \int_{[0,x)} d\dip,                                                    \\
    \label{eqnddphip0} \ddot{\phi}^\qd(0,x) & = \biggl(\int_0^x \Wr(t)dt\biggr)^2 - 2 \int_0^x \Wr(t)^2 t\, dt -2 \int_{[0,x)} t\, d\dip(t).                                                 
  \end{align}
 \end{proposition}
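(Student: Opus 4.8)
The plan is to recast the defining relations as a closed system of coupled Volterra integral equations for the pairs $(\theta(z,\redot),\theta^\qd(z,\redot))$ and $(\phi(z,\redot),\phi^\qd(z,\redot))$ involving only $\Wr$ and $\dip$, and then to read off the derivatives at the origin by differentiating this system in $z$. Integrating $\theta'$ from the initial value $\theta(z,0)=1$ and using~\eqref{eqnQD} to eliminate $\theta'$ in favour of $\theta^\qd$, one is led to
\begin{align}
 \theta(z,x) &= 1 + \int_0^x \theta^\qd(z,t)\,dt - z\int_0^x \Wr(t)\theta(z,t)\,dt, \\
 \theta^\qd(z,x) &= z\int_0^x \Wr(t)\theta^\qd(z,t)\,dt - z^2\int_0^x \Wr(t)^2\theta(z,t)\,dt - z^2\int_{[0,x)}\theta(z,t)\,d\dip(t),
\end{align}
and to the analogous system for $\phi$ obtained by interchanging the roles of the initial data, so that $\phi(z,x)=\int_0^x\phi^\qd(z,t)\,dt - z\int_0^x\Wr(t)\phi(z,t)\,dt$ while the second equation picks up an additional constant $1$ from the initial condition $\phi'\NLz=1$. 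The benefit of substituting $\theta'=\theta^\qd - z\Wr\theta$ is that the resulting kernels contain only $\Wr\in L^2_\loc[0,L)$ and the measure $\dip$, so that every integral is well defined.

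Setting $z=0$ recovers~\eqref{eqncsatzero}, and I would then differentiate the integral equations once and twice with respect to $z$ and evaluate at $z=0$. Since the functions $z\mapsto\theta(z,x)$, $z\mapsto\theta^\qd(z,x)$, $z\mapsto\phi(z,x)$ and $z\mapsto\phi^\qd(z,x)$ are entire by \cite[Corollary~3.5]{IndefiniteString}, the dotted quantities are precisely the corresponding $z$-derivatives. Differentiating the equation for $\theta^\qd$ once and inserting $\theta^\qd(0,\redot)=0$ gives $\dot\theta^\qd(0,x)=0$ at once; feeding this into the equation for $\theta$ yields $\dot\theta(0,x)=-\int_0^x\Wr(t)\,dt$. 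The same two steps applied to the $\phi$-system produce $\dot\phi^\qd(0,x)=\int_0^x\Wr(t)\,dt$ and then $\dot\phi(0,x)$. Differentiating twice and substituting the zeroth- and first-order values already found gives $\ddot\theta^\qd(0,x)$ and $\ddot\phi^\qd(0,x)$ directly, and $\ddot\theta(0,x)$ after one further integration in $x$. (Equivalently, one may expand each solution in a power series in $z$ and match coefficients, the two procedures producing the same recursion.)

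The only step that is more than bookkeeping is the quadratic term: both $\ddot\theta(0,x)$ and $\ddot\phi^\qd(0,x)$ involve $\int_0^x \Wr(t)\int_0^t\Wr(s)\,ds\,dt$, which equals $\frac12\bigl(\int_0^x\Wr(t)\,dt\bigr)^2$ since the integrand is the $t$-derivative of $\frac12\bigl(\int_0^t\Wr(s)\,ds\bigr)^2$; this is the source of the squared brackets in~\eqref{eqnddtheta0} and~\eqref{eqnddphip0}. I expect the main obstacle to be technical rather than conceptual, namely justifying the interchange of the $z$-differentiation with the integration in $t$ and with the integration against $\dip$ when passing to $z=0$. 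This is exactly where the analyticity from \cite[Corollary~3.5]{IndefiniteString} does the work, as it furnishes local uniform bounds for the relevant difference quotients on a compact neighbourhood of the origin, so that dominated convergence applies against both Lebesgue measure and $\dip$.
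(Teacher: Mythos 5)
Your proposal is correct and follows essentially the same route as the paper: the paper packages your two coupled Volterra systems into a single matrix integral equation for $Y(z,x)=\bigl(\begin{smallmatrix}\theta & -z\phi\\ -z^{-1}\theta^\qd & \phi^\qd\end{smallmatrix}\bigr)$, whose components are exactly your equations after eliminating $\theta'$, $\phi'$ via the quasi-derivatives, and likewise obtains the formulas by differentiating in $z$ at zero (the $z$-twisting merely makes the kernel linear in $z$, so one differentiation of $Y$ already encodes your second-order computations, including the same integration by parts producing the squared terms). All of your evaluated derivatives, the identity $\int_0^x \Wr(t)\int_0^t\Wr(s)\,ds\,dt=\frac{1}{2}\bigl(\int_0^x\Wr(t)\,dt\bigr)^2$, and the appeal to \cite[Corollary~3.5]{IndefiniteString} to justify the interchange of limits check out.
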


 \begin{proof}
  We first note that the second equality in~\eqref{eqndtheta0} follows from~\eqref{eqnQDtheta} since the first integral there is zero when $z$ is equal to zero. 
 Now consider the matrix function
  \begin{align*}
    Y(z,x) = \begin{pmatrix} \theta(z,x) & -z\phi(z,x) \\ -z^{-1}\theta^\qd(z,x) & \phi^\qd(z,x) \end{pmatrix}, \quad  z\in\C,~x\in[0,L),
  \end{align*}
  which is well-defined since the function $\theta^\qd(\ledot,x)$ has a root at zero. 
  It is an immediate consequence of \eqref{eqnQDtheta}, \eqref{eqnQDphi} and~\eqref{eqnQD}  that $Y$ satisfies the integral equation 
  \begin{align*}
   Y(z,x) = \begin{pmatrix} 1 & 0 \\ 0 & 1 \end{pmatrix} & + z \int_0^x  \begin{pmatrix} - \Wr(t) & -1 \\ \Wr(t)^2 & \Wr(t) \end{pmatrix} Y(z,t) dt \\
                                                                                     & + z \int_{[0,x)}  \begin{pmatrix} 0 & 0 \\ 1 & 0 \end{pmatrix}Y(z,t) d\dip(t), \quad x\in[0,L),~z\in\C.
  \end{align*}    
  For fixed $x\in[0,L)$, differentiating with respect to $z$ gives 
  \begin{align}\begin{split}\label{eqnIPYIEd}
   \dot{Y}(z,x) & =  \int_0^x  \begin{pmatrix} - \Wr(t) & -1 \\ \Wr(t)^2 & \Wr(t) \end{pmatrix} Y(z,t) dt + z \int_0^x  \begin{pmatrix} - \Wr(t) & -1 \\ \Wr(t)^2 & \Wr(t) \end{pmatrix} \dot{Y}(z,t) dt \\
                                                                                     & \qquad + \int_{[0,x)}  \begin{pmatrix} 0 & 0 \\ 1 & 0 \end{pmatrix}Y(z,t) d\dip(t) + z \int_{[0,x)}  \begin{pmatrix} 0 & 0 \\ 1 & 0 \end{pmatrix} \dot{Y}(z,t) d\dip(t), \quad z\in\C.
  \end{split}\end{align}  
  Evaluating at zero, we end up with 
  \begin{align*}
    \dot{Y}(0,x) =  \int_0^x  \begin{pmatrix} - \Wr(t) & -1 \\ \Wr(t)^2 & \Wr(t) \end{pmatrix} dt + \int_{[0,x)}  \begin{pmatrix} 0 & 0 \\ 1 & 0 \end{pmatrix} d\dip(t),
  \end{align*}
  which yields the first equality in~\eqref{eqndtheta0}, the second equality in~\eqref{eqndphi0} as well as~\eqref{eqnddthetap0}. 
  Differentiating~\eqref{eqnIPYIEd} once more and evaluating at zero, we obtain  
    \begin{align*}
   \ddot{Y}(0,x)  =  2 \int_0^x  \begin{pmatrix} - \Wr(t) & -1 \\ \Wr(t)^2 & \Wr(t) \end{pmatrix} \dot{Y}(0,t) dt   + 2 \int_{[0,x)}  \begin{pmatrix} 0 & 0 \\ 1 & 0 \end{pmatrix} \dot{Y}(0,t) d\dip(t),
  \end{align*}   
  which yields the first equality in~\eqref{eqndphi0}, \eqref{eqnddtheta0} as well as~\eqref{eqnddphip0} after performing some integrations by parts. 
 \end{proof}

 Although one could also compute the second derivative of $\phi(\ledot,x)$ at zero, we omitted to include it because the expression is somewhat lengthy and will not be needed in what follows. 
 In fact, to this end one just needs to note that 
  \begin{align}
    \ddot{\phi}(z,x) = \int_0^x \ddot{\phi}^\qd(z,t) dt - 2 \int_0^x \Wr(t) \dot{\phi}(z,t) dt - z \int_0^x \Wr(t) \ddot{\phi}(z,t)dt, \quad z\in\C,
  \end{align} 
  and plug in the expressions from Proposition~\ref{propFS} upon evaluating at zero.

 \section{Proof of Theorem~\ref{thm0}}\label{secPr1}

  To begin with, let us consider a particular class of generalized indefinite strings. 
  We assume that $(L,\omega,\dip)$ is a generalized indefinite string such that $L$ is infinite, there is an $R>0$ such that the normalized anti-derivative $\Wr$ of $\omega$ satisfies 
  \begin{align}
    \Wr(x) = x   
  \end{align}
  for almost all $x$ in $[R,\infty)$ and the measure $\dip$ vanishes on $[R,\infty)$. 
  In addition, let us also suppose that $\Wr$ is equal to a piecewise constant function almost everywhere on the interval $[0,R]$ and that the support of the measure $\dip$ is a finite set.
  The set of generalized indefinite strings with these properties will be denoted by $\F_0$. 
  Under our assumptions, for every $k$ in the upper complex half-plane $\C_+$, there is a {\em Jost solution} $f(k,\redot)$  of the differential equation~\eqref{eqnDEho} with $z=k^2 \in\C\backslash[0,\infty)$  
  such that 
  \begin{align}
    f(k,x) =  \E^{\I k x}, \quad x\in[R,\infty). 
  \end{align}   
  We note that since the function $f(k,\redot)$ clearly lies in $\dot{H}^1[0,\infty)$ and $L^2([0,\infty);\dip)$, the corresponding Weyl--Titchmarsh function $m$ is given by 
  \begin{align}\label{eqnmJost}
    m(k^2) = \frac{f'(k,0-)}{k^2f(k,0)}
  \end{align}
  as long as $k^2\in\C\backslash\R$. 
   Furthermore, we define the function $a$ on $\C_+$  via 
   \begin{align}\label{eq:a_0}
     a(k) = \frac{\I k f(k,0)+f'(k,0-)}{2\I k}, \quad k\in\C_+,
   \end{align}
  which can be viewed as the reciprocal transmission coefficient when the differential equation is suitably extended to the full line.

\begin{lemma}\label{lem:wronski}
The function $a$ has a unique continuation (denoted with $a$ as well for simplicity) to an entire function.
\end{lemma}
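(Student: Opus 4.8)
The plan is to reduce the function $a$ to an explicit combination of the fundamental system $\theta$, $\phi$ evaluated at the fixed point $R$, where the dependence on the spectral parameter is already understood. Since the Jost solution $f(k,\redot)$ solves the homogeneous equation~\eqref{eqnDEho} with $z=k^2$, and since its value and quasi-derivative at $R$ are explicit (from the known form of $f$ on $[R,\infty)$), I would first transport these data back from $R$ to $0$ along the fundamental system. Writing
\begin{align*}
 f(k,\redot) = f(k,0)\,\theta(k^2,\redot) + f'(k,0-)\,\phi(k^2,\redot),
\end{align*}
which holds because $f^\qd(k,0)=f'(k,0-)$ and the initial conditions normalize $\theta$, $\phi$, and evaluating both this identity and its quasi-derivative at $x=R$ yields a $2\times2$ linear system for the unknowns $f(k,0)$ and $f'(k,0-)$, with coefficients $\theta(k^2,R)$, $\phi(k^2,R)$, $\theta^\qd(k^2,R)$, $\phi^\qd(k^2,R)$.

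For the right-hand side I use $f(k,R)=\E^{\I k R}$ together with $f^\qd(k,R)=(\I k + k^2 R)\E^{\I k R}$; the latter holds because $\Wr(x)=x$ and $\dip$ vanishes on $[R,\infty)$, so that $f^\qd(k,x)=(\I k + k^2 x)\E^{\I k x}$ there, and because $\dip(\{R\})=0$ forces the quasi-derivative to be continuous at $R$. The determinant of the coefficient matrix is the Wronskian of $\theta(k^2,\redot)$ and $\phi(k^2,\redot)$, which is constant in $x$ and equal to $1$ by the normalization of the fundamental system, so Cramer's rule gives
\begin{align*}
 f(k,0) & = \bigl(\phi^\qd(k^2,R) - (\I k + k^2 R)\,\phi(k^2,R)\bigr)\E^{\I k R}, \\
 f'(k,0-) & = \bigl((\I k + k^2 R)\,\theta(k^2,R) - \theta^\qd(k^2,R)\bigr)\E^{\I k R}.
\end{align*}
Because $z\mapsto\theta(z,R)$, $z\mapsto\phi(z,R)$ and their quasi-derivatives are entire by \cite[Corollary~3.5]{IndefiniteString}, both $f(k,0)$ and $f'(k,0-)$ are entire functions of $k$, and hence so is the numerator $N(k)=\I k\,f(k,0)+f'(k,0-)$ appearing in the definition~\eqref{eq:a_0} of $a$.

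The only possible obstruction to $a=N/(2\I k)$ being entire is therefore the simple pole of $1/(2\I k)$ at the origin, and this is the point that needs care, since $k=0$ lies on the boundary of the half-plane $\C_+$ on which $a$ was originally defined. To remove it I would verify $N(0)=0$: at $z=0$ the values~\eqref{eqncsatzero} give $\theta(0,R)=1$, $\theta^\qd(0,R)=0$, $\phi(0,R)=R$ and $\phi^\qd(0,R)=1$, whence $f(0,0)=1$ and $f'(0,0-)=0$ (equivalently, $f(0,\redot)$ is constant equal to $1$), so that $N(0)=\I\cdot0\cdot f(0,0)+f'(0,0-)=0$. Since $N$ is entire and vanishes at $k=0$, the quotient $N(k)/(2\I k)$ has a removable singularity there and defines an entire function agreeing with $a$ on $\C_+$; uniqueness of the continuation is then immediate from the identity theorem, as $\C_+$ is open. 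I expect the quasi-derivative bookkeeping at $R$ and the verification $N(0)=0$ to be the only genuinely delicate steps, the entireness of all remaining ingredients being inherited directly from the cited analyticity of the fundamental system.
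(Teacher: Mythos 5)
Your proposal is correct and follows essentially the same route as the paper: both transport the Jost data from $R$ back to $0$ via the fundamental system (arriving at exactly the formulas~\eqref{eqnJosttp} and~\eqref{eqnJostdtp}), and both remove the pole of $1/(2\I k)$ at the origin using the values~\eqref{eqncsatzero} — the paper phrases this as $\theta^\qd(\ledot,R)$ having a root at zero in the representation~\eqref{eqnaasthephi}, which is the same cancellation as your direct verification that $N(0)=0$.
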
   

\begin{proof}
If $\theta$, $\phi$ denotes the fundamental system of solutions of the differential equation~\eqref{eqnDEho} as in Section~\ref{secES}, then we may write
\begin{align*}
  f(k,x) = f(k,0)\theta(k^2,x) + f'(k,0-)\phi(k^2,x), \quad x\in[0,\infty),~k\in\C_+.
\end{align*}
Upon evaluating this function and its derivative at the point $R$, we get  
\begin{align*}
  \E^{\I k R} & = f(k,0)\theta(k^2,R) + f'(k,0-)\phi(k^2,R), \\
  \I k \E^{\I k R} + k^2 R \E^{\I k R} & = f(k,0)\theta^\qd(k^2,R) + f'(k,0-)\phi^\qd(k^2,R),
\end{align*}
which we can solve for $f(k,0)$ and $f'(k,0-)$ to obtain
\begin{align}
  \label{eqnJosttp} f(k,0) & =  \E^{\I k R}\phi^\qd(k^2,R) - (\I k + k^2 R)\E^{\I k R} \phi(k^2,R), \\
  \label{eqnJostdtp} f'(k,0-) & = -\E^{\I k R}\theta^\qd(k^2,R) + (\I k + k^2 R)\E^{\I k R}\theta(k^2,R).
\end{align}
Plugging this into the definition of $a$ shows that 
\begin{align}\begin{split}\label{eqnaasthephi}
     \E^{-\I k R} a(k) & =\frac{1-\I k R}{2} \theta(k^2,R) - \frac{1}{2\I k} \theta^\qd(k^2,R) \\
       & \qquad -\frac{\I k+ k^2 R}{2} \phi(k^2,R) +\frac{1}{2} \phi^\qd(k^2,R), \quad k\in\C_+. 
   \end{split}\end{align}
     In view of the second equality in~\eqref{eqncsatzero}, this identity guarantees that the function $a$ has a unique continuation to an entire function. 
\end{proof}
   
   Even more, the right-hand side of~\eqref{eqnaasthephi} is actually a polynomial in $k$ due to our assumptions on the supports of $\omega$ and $\dip$ on $[0,R]$. 
   This entails that the function $a$ has only a finite number of zeros, none of which lie on the real axis. 
   In fact, in order to verify this, we first define the function $b$ on $\C_+$ via 
  \begin{align}\label{eq:b_0}
    b(k) = \frac{\I k f(k,0) - f'(k,0-)}{2\I k}, \quad k\in\C_+.
  \end{align} 
  Similarly as before, we see that $b$ can be continued to an entire function because of the identity 
   \begin{align}\begin{split}\label{eqnbasthephi}
     \E^{-\I k R} b(k) & = - \frac{1 - \I k R}{2} \theta(k^2,R) + \frac{1}{2\I k} \theta^\qd(k^2,R) \\
       & \quad\qquad - \frac{\I k+ k^2 R}{2} \phi(k^2,R) + \frac{1}{2} \phi^\qd(k^2,R), \quad k\in\C_+. 
   \end{split}\end{align}
  Now it is a straightforward computation to verify that for real $k$, we have  
    \begin{align}\label{eq:absym}
    a(k)^\ast & = a(-k), &    b(k)^\ast & = b(-k),
     \end{align}
  as well as
  \begin{align}\label{eqnabone}
    |a(k)|^2  = |b(k)|^2 + 1,
  \end{align}
  which guarantees that $a$ has no zeros on the real axis. 
  Furthermore, all zeros in the upper complex half-plane $\C_+$ necessarily have to lie on the imaginary axis. 
  In fact, if $k$ was a zero in $\C_+$ that does not lie on the imaginary axis, then $k^2\in\C\backslash\R$ and 
  \begin{align}
    f'(k,0-) = -\I k f(k,0).
  \end{align}
  This would allow us to compute the imaginary part  
   \begin{align}
   \im\, m(k^2) = \im\, \frac{f'(k,0-)}{k^2f(k,0)} = \im\, \frac{1}{\I k} = -\frac{\re\, k}{|k|^2} = - \frac{\im\, k^2}{2|k|^2\im\,k},
   \end{align}
   which is a contradiction to the fact that $m$ is a Herglotz--Nevanlinna function.
  As this proves that all zeros in $\C_+$ indeed lie on the imaginary axis, we may enumerate them, repeated according to multiplicity (it can be shown that they are simple but we do not need this here), by $\I\kappa_1,\ldots,\I\kappa_N$ for some positive constants $\kappa_1,\ldots,\kappa_N$. 
  With this notation, let us state the following result. 
 
 \begin{lemma}\label{lemTF0}
   We have the identity 
  \begin{align}\label{eqnTF0}
      \frac{4}{3} \sum_{n=1}^N \frac{1}{\kappa_n^3} + \frac{2}{\pi} \int_{\R} \frac{1}{k^4} \log|a(k)| dk  =\int_0^\infty |\Wr(x) - x |^2 dx + \int_{[0,\infty)} d\dip. 
  \end{align}
 \end{lemma}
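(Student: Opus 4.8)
The plan is to evaluate the coefficient of $k^3$ in the Taylor expansion of $\log a$ about the origin in two independent ways and to compare them. Since $a(0)=1$ (as \eqref{eqnaasthephi} together with the second equality in \eqref{eqncsatzero} shows), the function $\log a$ is analytic near $k=0$ with vanishing constant term, and because $|a(k)|=|a(-k)|$ by \eqref{eq:absym} the function $\log|a|$ is even, so that all odd Taylor coefficients of $\log a$ at the origin are purely imaginary; the target coefficient is therefore of the form $\I\times(\text{real})$.

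First I would compute this coefficient explicitly. Writing $a(k)=\E^{\I kR}Q(k)$, where $Q$ denotes the function on the right-hand side of \eqref{eqnaasthephi} (a polynomial under our standing assumptions), we have $\log a(k)=\I kR+\log Q(k)$, so it suffices to expand $Q$. Feeding the expansions of $\theta(\ledot,R)$, $\theta^\qd(\ledot,R)$, $\phi(\ledot,R)$ and $\phi^\qd(\ledot,R)$ at zero from \eqref{eqncsatzero} and Proposition~\ref{propFS} into \eqref{eqnaasthephi} (recall that differentiation is with respect to $z=k^2$) and collecting powers of $k$, a direct if somewhat lengthy computation — in which the $\E^{\I kR}$ prefactor and an integration by parts conspire to reorganize the $\Wr$-contributions into $\int_0^\infty|\Wr(x)-x|^2\,dx$ — shows that the coefficient of $k^3$ in $\log a(k)$ equals
\[ -\frac{\I}{2}\left( \int_0^\infty |\Wr(x)-x|^2\,dx + \int_{[0,\infty)} d\dip \right). \]

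Second I would recover the same coefficient by complex-analytic means. As all zeros of $a$ in $\C_+$ lie at $\I\kappa_1,\dots,\I\kappa_N$, I divide them out with Blaschke factors and write, for $k\in\C_+$,
\[ \log a(k)=\I kR+\sum_{n=1}^N \log\frac{k-\I\kappa_n}{k+\I\kappa_n}+\Phi(k), \]
where $\Phi$ is analytic and zero-free on $\C_+$, continuous up to $\R$, and satisfies $\re\,\Phi(k)=\log|a(k)|$ for real $k$ (the Blaschke factors are unimodular there and $\re(\I kR)=0$). A short expansion shows that each Blaschke term contributes $-\tfrac{2\I}{3\kappa_n^3}$ to the coefficient of $k^3$, while $\I kR$ contributes nothing beyond first order. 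For the remaining function $\Phi$ I would invoke a Schwarz-type dispersion relation, reconstructing $\Phi$ on $\C_+$ from its boundary real part $\log|a|$; expanding the Cauchy kernel $(\,\ledot\,-k)^{-1}$ in powers of $k$ then identifies the coefficient of $k^3$ in $\Phi$ as $-\tfrac{\I}{\pi}\int_\R k^{-4}\log|a(k)|\,dk$. Equating the two evaluations of the $k^3$ coefficient and multiplying by $2\I$ yields exactly \eqref{eqnTF0}.

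The main obstacle is the justification of the dispersion relation for $\Phi$, which rests on two growth estimates. At infinity $a(k)$ is $\E^{\I kR}$ times a polynomial, so $\log|a(k)|=\OO(\log|k|)$, which keeps $\int_\R k^{-4}\log|a|\,dk$ convergent at large $|k|$ and makes the large-semicircle contribution vanish. At the origin the crucial point is that $\log|a(k)|=\OO(k^4)$: indeed \eqref{eqnbasthephi} together with \eqref{eqncsatzero} and Proposition~\ref{propFS} gives $b(k)=\OO(k^2)$ as $k\to0$, whence $\log|a(k)|=\tfrac12\log(1+|b(k)|^2)=\OO(k^4)$ by \eqref{eqnabone}. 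This vanishing is precisely what renders $\int_\R k^{-4}\log|a|\,dk$ convergent at the origin and legitimizes reading off the $k^3$ coefficient from the kernel expansion, while the symmetry \eqref{eq:absym} is what allows the reconstruction to be phrased in terms of the real-valued $\log|a|$ rather than $\log a$ itself.
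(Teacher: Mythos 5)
Your proposal is correct and takes essentially the same route as the paper: there, too, one computes from Proposition~\ref{propFS} that $a(0)=1$, $a'(0)=a''(0)=0$ and $a'''(0)=-3\I\bigl(\int_0^\infty|\Wr(x)-x|^2dx+\int_{[0,\infty)}d\dip\bigr)$, invokes the Blaschke--outer (Nevanlinna) factorization of the bounded-type function $a$ with boundary data $\log|a|$, and obtains~\eqref{eqnTF0} by differentiating the logarithm of this factorization three times and letting $k\to0$ --- which is precisely your matching of $k^3$ Taylor coefficients, with the same bound $\log|a(k)|=\OO(k^4)$ legitimizing the limit. The only cosmetic differences are that you derive this bound from $b(k)=\OO(k^2)$ via~\eqref{eqnabone} rather than from the purely imaginary cubic coefficient of $a$, and that you justify the outer part by a contour-integral dispersion argument instead of citing the factorization theorem for functions of bounded type; both variants are sound.
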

  
  \begin{proof}
   From the representation~\eqref{eqnaasthephi} for $a$, together with the formulas in Proposition~\ref{propFS} for the fundamental system $\theta$, $\phi$, we see that 
    \begin{align*}
      a(0) & = 1, & a'(0) & = 0, & a''(0) & = 0, 
    \end{align*}
    and after a cumbersome but straightforward computation furthermore that 
    \begin{align}\label{eqndddazero}
      a'''(0) = -3\I \biggl(\int_0^\infty |\Wr(x)-x|^2 dx +  \int_{[0,\infty)} d\dip\biggr).
    \end{align}
    In particular, this yields the Taylor expansion 
      \begin{align*}
        a(k) = 1 - k^3 \frac{\I}{2} \biggl(\int_0^\infty |\Wr(x)-x|^2 dx +  \int_{[0,\infty)} d\dip\biggr) + \OO(k^4), \qquad k\rightarrow0,
      \end{align*}
      around zero, which entails that       
        \begin{align}\label{eqnlogazero}
           \log|a(k)| = \OO(k^4)
       \end{align}
      as $k\rightarrow0$ on the real line. 
  
   Since the function $a$ is of bounded type in the upper complex half-plane, it admits a Nevanlinna factorization \cite[Theorem~6.13]{roro94} of the form 
   \begin{align*}
    a(k) = C  \prod_{n=1}^N \frac{\I \kappa_n - k}{\I \kappa_n + k}\exp\biggl\{-\I \beta k + \frac{1}{\pi\I} \int_\R \biggl(\frac{1}{t-k}-\frac{t}{1+t^2}\biggr)\log|a(t)|dt\biggr\}, \quad k\in\C_+,
   \end{align*}
  for some real constant $\beta\in\R$ (in fact, it is not difficult to show that $\beta = - R$) and a complex constant $C\in\C$ with modulus one.  
  Upon differentiating this, we obtain
  \begin{align*}
    \frac{a'(k)}{a(k)} = -\I\beta + \frac{1}{\pi\I} \int_\R \frac{1}{(t-k)^2} \log|a(t)|dt + \sum_{n=1}^N \frac{2\I \kappa_n}{\kappa_n^2+k^2}
  \end{align*}
  for all $k\in\C_+$ close enough to zero (so that $a(k)$ is non-zero). 
  After differentiating two more times, we get 
  \begin{align*}
    & \frac{a'''(k)}{a(k)} - 3\frac{a'(k)a''(k)}{a(k)^2}  + 2\frac{a'(k)^3}{a(k)^3} \\
      & \qquad\qquad =  \frac{6}{\pi\I} \int_\R \frac{1}{(t-k)^4} \log|a(t)|dt - {4}{\I}\sum_{n=1}^N \frac{\kappa_n(\kappa_n^2 - k^2)^2 - 4\kappa_n k^4}{(\kappa_n^2+k^2)^4},
  \end{align*}
  again, as long as $k\in\C_+$ is close enough to zero. 
   Now we obtain identity~\eqref{eqnTF0} upon letting $k$ tend to zero, employing~\eqref{eqndddazero} and noting that the limit of the integral on the right-hand side exists because of the asymptotics~\eqref{eqnlogazero}.
  \end{proof}

  Note that both terms on the left-hand side of the identity~\eqref{eqnTF0} are non-negative in view of~\eqref{eqnabone}. 
  In particular, this observation will allow us to obtain an estimate on the negative eigenvalues of the corresponding self-adjoint realization.
  To this end, we first point out that there are only finitely many such eigenvalues.
   More precisely, we see from~\eqref{eqnJosttp} and~\eqref{eqnJostdtp} that the right-hand side of~\eqref{eqnmJost} is a rational function. 
   This implies that the Weyl--Titchmarsh function $m$ has a continuation to a meromorphic function on $\C\backslash[0,\infty)$ with only finitely many poles. 
   As a consequence, the negative spectrum of $(L,\omega,\dip)$ consists only of finitely many eigenvalues. 
  Upon enumerating these eigenvalues by $\lambda_1,\ldots,\lambda_{K}$ with increasing modulus, we obtain the following Lieb--Thirring-type bound. 

  \begin{corollary}\label{corEVest}
   We have the estimate 
  \begin{align}
      \frac{4}{3} \sum_{i=1}^{K} \frac{1}{|\lambda_i|^{3/2}}  \leq \int_0^\infty |\Wr(x) - x |^2 dx + \int_{[0,\infty)} d\dip. 
  \end{align}
 \end{corollary}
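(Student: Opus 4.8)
The plan is to deduce the corollary directly from the trace formula~\eqref{eqnTF0} by discarding the integral term and re-indexing the sum over the zeros of $a$ in terms of the negative eigenvalues. The key point is to match the zeros $\I\kappa_1,\ldots,\I\kappa_N$ of $a$ in the upper half-plane with the negative eigenvalues $\lambda_1,\ldots,\lambda_K$ of the string.

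First I would observe that each negative eigenvalue $\lambda$ of $(L,\omega,\dip)$ corresponds to a pole of the Weyl--Titchmarsh function $m$ on $(-\infty,0)$, and hence, via~\eqref{eqnmJost}, to a value of $k$ for which $f(k,0)=0$ with $k^2=\lambda$. Writing $\lambda = (\I\kappa)^2 = -\kappa^2$ for a suitable $\kappa>0$ (so that $k=\I\kappa$ lies on the positive imaginary axis, consistent with the choice of branch and with $k\in\C_+$), the condition $f(\I\kappa,0)=0$ forces $f'(\I\kappa,0-)=\I(\I\kappa)f(\I\kappa,0)$ trivially, but more to the point, from the definitions~\eqref{eq:a_0} and~\eqref{eq:b_0} one sees that $f(k,0)=a(k)-b(k)$ and $f'(k,0-)=\I k(a(k)+b(k))$, so a zero of $f(\redot,0)$ at $k=\I\kappa$ together with the requirement that $f$ be the decaying (Jost) solution singles out exactly the zeros of $a$ on the imaginary axis. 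Thus the negative eigenvalues $\lambda_i$ are precisely the numbers $-\kappa_n^2$, giving $K=N$ and $\kappa_n = |\lambda_n|^{1/2}$ after matching the enumerations.

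With this identification, $\kappa_n^{-3} = |\lambda_n|^{-3/2}$, so the first sum on the left-hand side of~\eqref{eqnTF0} becomes exactly $\frac{4}{3}\sum_{i=1}^{K}|\lambda_i|^{-3/2}$. Since $|a(k)|^2 = |b(k)|^2 + 1 \geq 1$ for real $k$ by~\eqref{eqnabone}, we have $\log|a(k)| \geq 0$ on $\R$, and therefore the integral term $\frac{2}{\pi}\int_\R k^{-4}\log|a(k)|\,dk$ is non-negative. Dropping this non-negative term from~\eqref{eqnTF0} immediately yields the claimed inequality.

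The main obstacle is the eigenvalue--zero correspondence in the second paragraph: one must be careful to justify that the poles of $m$ on the negative real axis are in exact bijection with the zeros of $a$ on the imaginary axis (accounting for multiplicity), using the already-established facts that all zeros of $a$ in $\C_+$ lie on the imaginary axis and that $m$ is meromorphic on $\C\backslash[0,\infty)$ with finitely many poles. Once this bookkeeping is settled, the estimate is a one-line consequence of~\eqref{eqnTF0} and the positivity noted after its proof.
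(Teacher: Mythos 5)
Your final step (dropping the integral term from~\eqref{eqnTF0}, which is legitimate since $|a(k)|\geq 1$ on $\R$ by~\eqref{eqnabone}) is fine and matches the paper, but the eigenvalue--zero correspondence at the heart of your argument is false, and you flag it as the main obstacle with good reason. First, the algebra is swapped: from~\eqref{eq:a_0} and~\eqref{eq:b_0} one gets $f(k,0)=a(k)+b(k)$ and $f'(k,0-)=\I k\,(a(k)-b(k))$, not the other way around. Consequently, a pole of $m$ at $\lambda=-\kappa^2<0$, i.e.\ $f(\I\kappa,0)=0$, is the condition $a(\I\kappa)=-b(\I\kappa)$, whereas $a(\I\kappa)=0$ is the condition $f'(\I\kappa,0-)=\kappa f(\I\kappa,0)$, equivalently $m(-\kappa^2)=-1/\kappa$. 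These two sets are in fact \emph{disjoint}: if $a(\I\kappa)=0$ then $m(-\kappa^2)=-1/\kappa$ is finite, so $-\kappa^2$ is not a pole of $m$; conversely, at a pole one has $f(\I\kappa,0)=0$ and $f'(\I\kappa,0-)\neq 0$ (a nontrivial solution cannot have both data vanish), so $a(\I\kappa)=f'(\I\kappa,0-)/(-2\kappa)\neq 0$. This is consistent with $a$ being the reciprocal transmission coefficient of a full-line extension: its zeros are bound states of the full-line problem, not the negative eigenvalues of the half-line string. So the identification $K=N$, $\kappa_n=|\lambda_n|^{1/2}$ cannot be repaired, and with it the claim that the first sum in~\eqref{eqnTF0} \emph{equals} $\frac{4}{3}\sum_i|\lambda_i|^{-3/2}$ collapses.

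What is true, and what the paper proves, is an \emph{interlacing} statement obtained from the Herglotz property: the function $\kappa\mapsto m(-\kappa^2)=f'(\I\kappa,0-)/(-\kappa^2 f(\I\kappa,0))$ is real, continuous and strictly monotone in $\kappa$ between consecutive poles $\sqrt{-\lambda_{i-1}}$ and $\sqrt{-\lambda_i}$ (and on $(0,\sqrt{-\lambda_1})$), so by the intermediate value theorem it crosses the curve $-1/\kappa$ there; each crossing is a zero $\I\kappa_{n(i)}$ of $a$ with $\kappa_{n(i)}<\sqrt{-\lambda_i}$, and distinct eigenvalues yield distinct indices $n(i)$. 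This gives $\sum_{i=1}^{K}|\lambda_i|^{-3/2}\leq\sum_{i=1}^{K}\kappa_{n(i)}^{-3}\leq\sum_{n=1}^{N}\kappa_n^{-3}$ (an inequality, not an equality), after which your concluding step via Lemma~\ref{lemTF0} and $\log|a|\geq 0$ goes through verbatim. To fix your write-up, replace the bijection claim by this monotonicity/interlacing argument.
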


 \begin{proof}
  We may assume that there are negative eigenvalues. 
  Since the function $m$ is a Herglotz--Nevanlinna function, we see from~\eqref{eqnmJost} that the function 
  \begin{align*}
    \kappa\mapsto -\frac{f'(\I\kappa,0-)}{\kappa^2 f(\I\kappa,0)}
  \end{align*}
  is real-valued, continuous and strictly decreasing for positive $\kappa$ away from the poles $\sqrt{-\lambda_1},\ldots,\sqrt{-\lambda_K}$. 
  Because of this, we can find a positive $\kappa<\sqrt{-\lambda_1}$ such that 
  \begin{align}\label{eqnfindkappa}
    -\frac{f'(\I\kappa,0-)}{\kappa^2 f(\I\kappa,0)} = -\frac{1}{\kappa}.
  \end{align}  
  Since this means that $\I\kappa$ is a zero of $a$, there is an index $n(1)\in\lbrace1,\ldots,N\rbrace$ such that $\kappa=\kappa_{n(1)}$ and thus $\kappa_{n(1)}<\sqrt{-\lambda_{1}}$. 
   If $K>1$ and $\lambda_{i-1}$, $\lambda_{i}$ are two consecutive eigenvalues for some $i\in\{2,\ldots,K\}$, then we can find a positive $\kappa$ between $\sqrt{-\lambda_{i-1}}$ and $\sqrt{-\lambda_{i}}$ such that~\eqref{eqnfindkappa} holds true.
  As before, we see that $\I\kappa$ is a zero of $a$ so that there is an index $n(i)\in\lbrace1,\ldots,N\rbrace$ such that $\kappa=\kappa_{n(i)}$ and thus also $\kappa_{n(i)}<\sqrt{-\lambda_{i}}$. 
  In conclusion, this shows that 
  \begin{align*}
    \frac{4}{3} \sum_{i=1}^{K} \frac{1}{|\lambda_i|^{3/2}} \leq \frac{4}{3} \sum_{i=1}^{K} \frac{1}{\kappa_{n(i)}^3} \leq \frac{4}{3} \sum_{n=1}^N \frac{1}{\kappa_n^3},
  \end{align*}
  which yields the claim upon invoking Lemma~\ref{lemTF0}. 
 \end{proof}

  The next ingredient for our proof will be an estimate on the absolutely continuous spectrum of $(L,\omega,\dip)$.
  To this end, we first note that we have    
\begin{align}\label{eq:m=ab}
   \I k\, m(k^2) = \frac{b(k)-a(k)}{b(k) + a(k)}
  \end{align}
  for all $k\in\C_+$ with $k^2\in\C\backslash\R$. 
Since the functions $a$ and  $b$ are entire and satisfy the properties  \eqref{eq:absym} and \eqref{eqnabone} on the real line, one can conclude that the spectrum of $(L,\omega,\dip)$ on the interval $[0,\infty)$ is purely absolutely continuous with the corresponding spectral measure $\mu$  given by 
  \begin{align}
    \mu(B) = \int_B \varrho(\lambda) d\lambda
  \end{align}
  for every Borel set $B\subseteq[0,\infty)$, where $\varrho$ is defined by 
   \begin{align}\label{eqnrho}
    \varrho(\lambda) = \lim_{\varepsilon\rightarrow0} \frac{1}{\pi}\im\, m(\lambda+\I\varepsilon) = \frac{1}{\pi\sqrt{\lambda}|b(\sqrt{\lambda}) + a(\sqrt{\lambda})|^2},
    \quad \lambda\in(0,\infty). 
   \end{align}
   We note that the function $\varrho$ is continuous and positive on $(0,\infty)$.
  
    \begin{corollary}\label{corACest}
   For every compact subset $\Omega$ of $(0,\infty)$, we have the estimate 
  \begin{align}\label{eqnTFest0}
      - \frac{1}{\pi}\int_\Omega \log\biggl(\varrho(\lambda) \frac{C_\Omega\lambda^3}{\sqrt{\lambda}}\biggl) \frac{\sqrt{\lambda}}{\lambda^3} d\lambda  \leq \int_0^\infty |\Wr(x) - x |^2 dx + \int_{[0,\infty)} d\dip,
  \end{align}
  where $C_\Omega = 4\pi (\min\,\Omega)^{-2}$ is a positive constant. 
 \end{corollary}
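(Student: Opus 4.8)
The plan is to bound the left-hand side of~\eqref{eqnTFest0} from above by the integral term that appears in the trace formula~\eqref{eqnTF0}, and then to invoke Lemma~\ref{lemTF0} together with the non-negativity of the remaining eigenvalue sum there.

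First I would record the algebraic identity $a(k) + b(k) = f(k,0)$, which is immediate upon adding the defining formulas~\eqref{eq:a_0} and~\eqref{eq:b_0}. In view of this, the density in~\eqref{eqnrho} may be written as $\varrho(\lambda) = (\pi\sqrt{\lambda}\,|a(\sqrt{\lambda}) + b(\sqrt{\lambda})|^2)^{-1}$. The crucial elementary estimate is that $|b(k)| \leq |a(k)|$ for real $k$, which follows at once from~\eqref{eqnabone}; combined with the triangle inequality this gives $|a(k) + b(k)| \leq 2|a(k)|$ and hence the lower bound
\begin{align*}
  \varrho(\lambda) \geq \frac{1}{4\pi\sqrt{\lambda}\,|a(\sqrt{\lambda})|^2}, \quad \lambda\in(0,\infty).
\end{align*}

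Next I would substitute $C_\Omega = 4\pi(\min\,\Omega)^{-2}$ to deduce, for every $\lambda\in\Omega$, that
\begin{align*}
  \varrho(\lambda)\,\frac{C_\Omega\lambda^3}{\sqrt{\lambda}} \geq \frac{\lambda^2}{(\min\,\Omega)^2\,|a(\sqrt{\lambda})|^2} \geq \frac{1}{|a(\sqrt{\lambda})|^2},
\end{align*}
where the final inequality uses $\lambda\geq\min\,\Omega$. In particular the argument of the logarithm in~\eqref{eqnTFest0} is at least one, so taking logarithms yields the pointwise bound $-\log(\varrho(\lambda)\,C_\Omega\lambda^3/\sqrt{\lambda}) \leq 2\log|a(\sqrt{\lambda})|$; since the weight $\sqrt{\lambda}/\lambda^3$ is positive this may be integrated over $\Omega$.

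Finally I would carry out the substitution $k = \sqrt{\lambda}$, under which $\sqrt{\lambda}\,\lambda^{-3}\,d\lambda = 2k^{-4}\,dk$, turning the resulting bound into $\tfrac{4}{\pi}\int_{\sqrt{\Omega}}k^{-4}\log|a(k)|\,dk$. Because $\log|a(k)|\geq0$ on all of $\R$ by~\eqref{eqnabone} and the integrand is even (as $|a(-k)| = |a(k)|$ by~\eqref{eq:absym}), this integral over $\sqrt{\Omega}\subseteq(0,\infty)$ is dominated by $\tfrac{2}{\pi}\int_\R k^{-4}\log|a(k)|\,dk$, which is itself no larger than the right-hand side of~\eqref{eqnTF0} because the eigenvalue sum there is non-negative. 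Chaining these inequalities gives precisely~\eqref{eqnTFest0}. I do not expect a serious obstacle: the argument is a direct consequence of Lemma~\ref{lemTF0}, and the only points demanding care are the bookkeeping of the constant $C_\Omega$, the Jacobian and weight in the change of variables, and the observation that $\varrho$ and $a$ are continuous (hence the integrand integrable) on the compact set $\Omega$.
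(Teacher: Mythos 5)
Your proposal is correct and follows essentially the same route as the paper: the pointwise bound $|a(\sqrt{\lambda})|^{-2}\leq C_\Omega\lambda^{5/2}\varrho(\lambda)$ on $\Omega$, taking logarithms, the substitution $k=\sqrt{\lambda}$, extending the integral to all of $\R$ using $\log|a|\geq0$ and the symmetry $|a(-k)|=|a(k)|$, and finally Lemma~\ref{lemTF0} together with the non-negativity of the sum over the $\kappa_n$. The only cosmetic difference is how the key inequality $|a(k)+b(k)|\leq 2|a(k)|$ is obtained: you use $|b|\leq|a|$ from~\eqref{eqnabone} and the triangle inequality, while the paper computes $\re\bigl(2a/(a+b)\bigr)=1+|a+b|^{-2}\geq1$; both rest on~\eqref{eqnabone} and yield the same bound. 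One side remark of yours is false, though harmlessly so: from $\varrho(\lambda)C_\Omega\lambda^{5/2}\geq|a(\sqrt{\lambda})|^{-2}$ one cannot conclude that the argument of the logarithm is at least one (indeed $|a|\geq1$, so $|a|^{-2}\leq1$, and the argument can genuinely be smaller than one when $|a+b|$ is large); fortunately this claim is never needed, since the pointwise bound $-\log\bigl(\varrho(\lambda)C_\Omega\lambda^{5/2}\bigr)\leq2\log|a(\sqrt{\lambda})|$ follows directly from your displayed inequality by monotonicity of the logarithm, exactly as in the paper.
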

 
 \begin{proof}
   For every positive $k$, we first compute that 
     \begin{align*}
       \left| 1- \frac{b(k)-a(k)}{b(k)+a(k)}\right|^2 =  \frac{4 |a(k)|^2}{|b(k)+a(k)|^2} = 4\pi k |a(k)|^2  \varrho(k^2) 
  \end{align*}
  and on the other side that 
       \begin{align*}
       \left| 1- \frac{b(k)-a(k)}{b(k)+a(k)}\right|  \geq   \re\left( 1- \frac{b(k)-a(k)}{b(k)+a(k)}\right) = 1+ \frac{1}{|b(k)+a(k)|^2} \geq 1.
  \end{align*}
  In combination, this gives the bound 
  \begin{align*}
    \frac{1}{|a(\sqrt{\lambda})|^2}  \leq 4\pi \sqrt{\lambda} \varrho(\lambda) \leq C_\Omega \lambda^{5/2} \varrho(\lambda)
  \end{align*}  
  as long as $\lambda\in\Omega$, which allows us to estimate the integral  
  \begin{align*}
    - \frac{1}{\pi} \int_\Omega \log\biggl(\varrho(\lambda) \frac{C_\Omega\lambda^3}{\sqrt{\lambda}}\biggl) \frac{\sqrt{\lambda}}{\lambda^3} d\lambda \leq \frac{1}{\pi} \int_\Omega \log|a(\sqrt{\lambda})|^2 \frac{\sqrt{\lambda}}{\lambda^3}d\lambda.
   \end{align*}
   Upon employing a substitution, we can further bound this by 
   \begin{align*}
      \frac{2}{\pi} \int_{\sqrt{\min\Omega}}^{\sqrt{\max\Omega}} \log|a(k)|^2 \frac{1}{k^4} dk \leq \frac{4}{\pi} \int_{0}^{\infty} \log|a(k)| \frac{1}{k^4} dk = \frac{2}{\pi} \int_{\R} \log|a(k)| \frac{1}{k^4} dk,
  \end{align*}
  which yields the claim in view of Lemma~\ref{lemTF0}. 
 \end{proof}

  With these auxiliary facts, we are now in position to prove our first theorem. 

\begin{proof}[Proof of Theorem~\ref{thm0}]
  Let us assume for now that $S$ is a generalized indefinite string $(L,\omega,\dip)$ such that $L$ is infinite and 
    \begin{align*}
      \int_0^\infty \left| \Wr(x) - x\right|^2 dx + \int_{[0,\infty)}  d\dip & < \infty,
    \end{align*}
    where $\Wr$ is the normalized anti-derivative of $\omega$. 
  We are first going to construct a suitable approximating sequence of generalized indefinite strings $(L_n,\omega_n,\dip_n)$ from the set $\mathcal{F}_0$. 
  For every $n\in\N$, let $L_n$ be infinite and choose $R_n>n$ such that 
  \begin{align*}
    \int_{R_n}^\infty |\Wr(x)-x|^2 dx < \frac{1}{n}. 
  \end{align*}
  We can then find a real-valued function $\Wr_n$ on $[0,\infty)$ which is piecewise constant on the interval $[0,R_n]$ with 
  \begin{align*}
   \int_0^{R_n} |\Wr_n(x)-\Wr(x)|^2 dx < \frac{1}{n}
  \end{align*}
  and satisfies $\Wr_n(x)=x$ for all $x>R_n$. 
  The distribution $\omega_n$ is now defined in such a way that the corresponding normalized anti-derivative coincides with $\Wr_n$ almost everywhere. 
  Apart from this, we are able to find a non-negative Borel measure $\dip_n$ which is supported on a finite set contained in $[0,R_n)$ with 
  \begin{align*}
    \int_{[0,\infty)} d\dip_n = \int_{[0,\infty)} d\dip
  \end{align*}
  and such that 
  \begin{align*}
    \int_{[0,x)} d\dip_n \rightarrow \int_{[0,x)} d\dip, \qquad n\rightarrow \infty, 
  \end{align*}
  for almost every $x\in[0,\infty)$. 
  Note that by construction, we then have 
  \begin{align}\label{eqnNconv}
    \int_0^\infty |\Wr_n(x)-x|^2 dx + \int_{[0,\infty)} d\dip_n \rightarrow \int_0^\infty |\Wr(x)-x|^2 dx + \int_{[0,\infty)} d\dip
  \end{align}
  as $n\rightarrow\infty$.
  Furthermore, it follows readily from \cite[Proposition~6.2]{IndefiniteString} that the corresponding Weyl--Titchmarsh functions $m_n$ converge locally uniformly to $m$. 
  Thus the associated spectral measures $\mu_n$ certainly satisfy 
  \begin{align}\label{eqnmuconv}
   \int_\R g(\lambda)d\mu_n(\lambda) \rightarrow \int_\R g(\lambda) d\mu(\lambda), \qquad n\rightarrow\infty,
  \end{align}
  for every continuous function $g$ on $\R$ with compact support. 

   In order to prove that the essential spectrum of $S$ is restricted to $[0,\infty)$, let $I$ be a compact interval in $(-\infty,0)$. 
   Because of the estimate in Corollary~\ref{corEVest} and the convergence in~\eqref{eqnNconv}, we see that there is an integer $K_I$ such that $(L_n,\omega_n,\dip_n)$ has at most $K_I$ eigenvalues in the interval $I$ for every $n\in\N$. 
   It now follows from the convergence of the measures $\mu_n$ in~\eqref{eqnmuconv} that the limit measure $\mu$ is supported on a finite set on $I$, which implies that $S$ has at most finitely many eigenvalues in $I$.  
  Since the interval $I$ was arbitrary, we conclude that the essential spectrum of $S$ is necessarily contained in $[0,\infty)$. 
  
  Now take a compact set $\Omega\subset(0,\infty)$ of positive Lebesgue measure.
   Due to the convergence of the measures $\mu_n$ in~\eqref{eqnmuconv} we have (see \cite[Theorem~30.2]{ba01})
  \begin{align*}
   \mu(\Omega) \geq \limsup_{n\rightarrow\infty} \mu_n(\Omega) = \limsup_{n\rightarrow\infty} \int_\Omega \varrho_n(\lambda)d\lambda,
  \end{align*}
  where the functions $\varrho_n$ are given as in~\eqref{eqnrho}. 
 An application of Jensen's inequality \cite[Theorem~3.3]{ru74} then furthermore yields
 \begin{align*}
  \mu(\Omega) & \geq \limsup_{n\rightarrow\infty} D_\Omega \exp\biggl\{\frac{1}{C_\Omega D_\Omega} \int_\Omega \log\biggl(\varrho_n(\lambda)\frac{C_\Omega\lambda^3}{\sqrt{\lambda}}\biggr) \frac{\sqrt{\lambda}}{\lambda^3}  d\lambda\biggr\}, 
 \end{align*}
 where $C_\Omega$, $D_\Omega$ are positive constants defined as in Corollary~\ref{corACest} and by
 \begin{align*}
  D_\Omega= \frac{1}{C_\Omega} \int_\Omega   \frac{\sqrt{\lambda}}{\lambda^3}  d\lambda.
 \end{align*}
  In view of the estimate in Corollary~\ref{corACest} and the convergence in~\eqref{eqnNconv}, we can conclude that $\mu(\Omega)$ is indeed positive with
   \begin{align*}
  \mu(\Omega) & \geq   D_\Omega \exp\biggl\{\frac{-\pi}{C_\Omega D_\Omega} \biggl(\int_0^\infty |\Wr(x)-x|^2dx + \int_{[0,\infty)}d\dip\biggr)\biggr\}.
 \end{align*}
  Since all Borel measures on $\R$ are regular, this readily implies that $\mu(\Omega)$ is positive for every Borel set $\Omega\subseteq[0,\infty)$ of positive Lebesgue measure. 
  With this fact, we have finally verified that the essential spectrum of $S$ coincides with the interval $[0,\infty)$ and the absolutely continuous spectrum of $S$ is essentially supported on $[0,\infty)$. 
  
  In order to finish the proof of Theorem~\ref{thm0}, let us suppose that $S$ is a generalized indefinite string $(L,\omega,\dip)$ such that  $L$ is infinite and~\eqref{eqnCondS0} holds   for a real constant $c$ and a positive constant $\eta$.
  We consider the generalized indefinite string $({L},\tilde{\omega},\tilde{\dip})$, where $\tilde{\omega}$ is defined via its normalized anti-derivative $\tilde{\Wr}$ by 
  \begin{align}\label{eqnWrtilde} 
    \tilde{\Wr} & = \frac{\Wr - c}{\eta}
  \end{align}
  and $\tilde{\dip} = \eta^{-2} \dip$.
  Since $(L,\tilde{\omega},\tilde{\dip})$ satisfies the assumptions imposed before, we infer that the essential spectrum of $(L,\tilde{\omega},\tilde{\dip})$ coincides with the interval $[0,\infty)$ and its absolutely continuous spectrum is essentially supported on $[0,\infty)$. 
  However, since the corresponding Weyl--Titchmarsh functions $m$ and $\tilde{m}$ are related via 
  \begin{align}\label{eqnmtilde}
    m(z) = \eta\, \tilde{m}(\eta z) + c, \quad z\in\C\backslash\R, 
  \end{align} 
  the same is true for $S$.
\end{proof}

 \section{Proof of Theorem~\ref{thmalpha}}\label{secPr2}

  In order to prepare for the proof of our second main result, let us fix a positive constant $\alpha$.
  We assume that $(L,\omega,\dip)$ is a generalized indefinite string such that $L$ is infinite, there is an $R>0$ such that the normalized anti-derivative $\Wr$ of $\omega$ satisfies 
  \begin{align}
    \Wr(x) = \frac{x}{1+2\sqrt{\alpha}x} 
  \end{align}
  for almost all $x$ in $[R,\infty)$ and the measure $\dip$ vanishes on $[R,\infty)$. 
   In addition, let us also suppose that $\Wr$ is equal to a piecewise constant function almost everywhere on the interval $[0,R]$ and that the support of the measure $\dip$ is a finite set. 
   For easy reference later on, we will denote the set of generalized indefinite strings defined in this way by $\F_\alpha$. 
   Under our assumptions, for every $k\in\C_+$, there is a {\em Jost solution}  $f(k,\redot)$ of the differential equation~\eqref{eqnDEho}  with $z=k^2+\alpha\in\C\backslash[\alpha,\infty)$ such that 
  \begin{align}
    f(k,x) = (1+2\sqrt{\alpha}x)^{\frac{\I k}{2\sqrt{\alpha}} + \frac{1}{2}}, \quad x\in[R,\infty).
  \end{align} 
  We note that since the function $f(k,\redot)$ clearly lies in $\dot{H}^1[0,\infty)$ and $L^2([0,\infty);\dip)$, the corresponding Weyl--Titchmarsh function $m$ is given by 
   \begin{align}\label{eqnmJostalpha}
     m(k^2+\alpha) =  \frac{f'(k,0-)}{(k^2+\alpha)f(k,0)}
    \end{align} 
    as long as $k^2+\alpha\in\C\backslash\R$.  
    Furthermore, we define the function $a$ on $\C_+$ via 
   \begin{align}
     a(k) = \frac{(\I k - \sqrt{\alpha})f(k,0)+f'(k,0-)}{2\I k}, \quad k\in\C_+. 
   \end{align}
   If we denote with $\theta$, $\phi$ the fundamental system of solutions of the differential equation~\eqref{eqnDEho} as in Section~\ref{secES}, then we readily  compute that 
   \begin{align}
   \begin{split}\label{eqnfptalpha}
     f(k,0) & = (1+2\sqrt{\alpha}R)^{\frac{\I k}{2\sqrt{\alpha}}+\frac{1}{2}} \phi^\qd(k^2+\alpha,R) \\
       & \qquad - (\I k+\sqrt{\alpha})(1-(\I k-\sqrt{\alpha})R)(1+2\sqrt{\alpha}R)^{\frac{\I k}{2\sqrt{\alpha}}-\frac{1}{2}} \phi(k^2+\alpha,R), \end{split} \\
    \begin{split}\label{eqnfqdptalpha}
     f'(k,0-) & = - (1+2\sqrt{\alpha}R)^{\frac{\I k}{2\sqrt{\alpha}}+\frac{1}{2}} \theta^\qd(k^2+\alpha,R) \\
       & \qquad + (\I k+\sqrt{\alpha})(1-(\I k-\sqrt{\alpha})R)(1+2\sqrt{\alpha}R)^{\frac{\I k}{2\sqrt{\alpha}}-\frac{1}{2}} \theta(k^2+\alpha,R), \end{split}
   \end{align}
   which yields, upon plugging these expressions into the definition of $a$, that 
    \begin{align}\begin{split}\label{eqnaasthephi_a}
    &  \frac{2\I k}{(1+2\sqrt{\alpha}R)^{\frac{\I k}{2\sqrt{\alpha}} + \frac{1}{2}}}a(k) \\
     & \quad  =\frac{(\I k + \sqrt{\alpha})(1 - (\I k - \sqrt{\alpha})R)}{1+2\sqrt{\alpha}R}\theta(k^2+\alpha,R) - \theta^\qd(k^2+\alpha,R) \\
       & \qquad\quad +  \frac{1-(\I k - \sqrt{\alpha})R}{1+2\sqrt{\alpha}R}(k^2+\alpha)\phi(k^2+\alpha,R)   + (\I k - \sqrt{\alpha})\phi^\qd(k^2+\alpha,R) 
   \end{split}\end{align}
   for all $k\in\C_+$.
   Due to our assumptions on the supports of $\omega$ and $\dip$ on $[0,R]$, the right-hand side of this equation turns out to be a polynomial in $k$.
   This guarantees that the function $a$ admits an analytic continuation (denoted with $a$ as well for simplicity) to all of $\C$ except for zero, where $a$ has at most a simple pole. 
   It furthermore entails that $a$ has only finitely many zeros, none of which lie on the real axis. 
   In order to prove this, we introduce the function $b$ on $\C_+$ next via 
   \begin{align}
     b(k) = \frac{(\I k +  \sqrt{\alpha})f(k,0) - f'(k,0-)}{2\I k}, \quad k\in\C_+.
   \end{align}
   One infers from the expressions in~\eqref{eqnfptalpha} and~\eqref{eqnfqdptalpha} that $b$ also admits an analytic continuation to all of $\C$ except for zero.
   Now after a straightforward computation, we see that for all non-zero real $k$, we have  
   \begin{align}\label{eqnabalpha}
     a(k)^\ast & = a(-k), & b(k)^\ast & = b(-k), 
   \end{align}  
   as well as 
   \begin{align}\label{eqnapbalpha}
     |a(k)|^2 = |b(k)|^2 + 1, 
   \end{align}
   which guarantees that $a$ has no zeros on the real axis.
   Moreover, all zeros in $\C_+$ necessarily have to lie on the imaginary axis.  
   In fact, if $k$ was a zero in $\C_+$ that does not lie on the imaginary axis, then $k^2+\alpha\in\C\backslash\R$ and 
   \begin{align}
     f'(k,0-) = -(\I k - \sqrt{\alpha})f(k,0).
   \end{align}
   This would allow us to compute the imaginary part
   \begin{align}\begin{split}
     \im\, m(k^2+\alpha) & = \im\, \frac{f'(k,0-)}{(k^2+\alpha) f(k,0)} = \im\, \frac{1}{\I k+\sqrt{\alpha}} = - \frac{\re\, k}{|\I k+\sqrt{\alpha}|^2} \\
      & = - \frac{\im\, (k^2+\alpha)}{2\im\, k\,|\I k+\sqrt{\alpha}|^2},
   \end{split}\end{align}
   which is a contradiction to the fact that $m$ is a Herglotz--Nevanlinna function. 
   As this proves that all zeros in $\C_+$ indeed lie on the imaginary axis, we may enumerate them, repeated according to multiplicity, by $\I\kappa_1,\ldots,\I\kappa_N$ for some positive constants $\kappa_1,\ldots,\kappa_N$. 
   The next result collects two trace formulas which are going to play a key role in the proof of Theorem \ref{thmalpha}.

  \begin{lemma}\label{lemTF2}
   We have the identities 
  \begin{align}\label{eq:trace01alpha}
  \begin{split}
   &   \frac{1}{\sqrt{\alpha}}\sum_{n=1}^N \frac{ \kappa_n}{\kappa_n^2-\alpha} + \frac{1}{2{\alpha}}\sum_{n=1}^N\log \left|\frac{\kappa_n - \sqrt{\alpha}}{\kappa_n + \sqrt{\alpha}}\right|  +  \frac{\sqrt{\alpha}}{\pi} \int_\R \frac{1}{(k^2+{\alpha})^2} \log|a(k)|dk   \\
    & \qquad=  \int_0^\infty \Wr(x) - \frac{x}{1+2\sqrt{\alpha}x}\,dx  
    \end{split}\end{align}
   and
   \begin{align}\label{eq:trace02alpha}
  \begin{split}
     &   \frac{1}{2{\alpha}}\sum_{n=1}^N \frac{\kappa_n^3+\alpha\kappa_n }{(\kappa_n^2-\alpha)^2} + \frac{1}{4{\alpha}^{3/2}}\sum_{n=1}^N\log \left|\frac{\kappa_n - \sqrt{\alpha}}{\kappa_n + \sqrt{\alpha}}\right|+ \frac{2}{\pi} \int_\R \frac{k^2}{(k^2+{\alpha})^3} \log|a(k)|dk \\
     &  \qquad =\int_0^\infty  \Bigl|\Wr(x) - \frac{x}{1+2\sqrt{\alpha}x} \Bigr|^2 (1+2\sqrt{\alpha}x) dx + \int_{[0,\infty)} (1+2\sqrt{\alpha}x) d\dip(x).  
   \end{split}\end{align}
 \end{lemma}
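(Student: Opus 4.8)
The plan is to mirror the strategy that produced Lemma~\ref{lemTF0}, namely to extract trace formulas from the low-order Taylor coefficients of $a$ combined with a Nevanlinna factorization, but now with the subtlety that $a$ has a pole at the origin rather than being analytic there. First I would use the representation~\eqref{eqnaasthephi_a} together with the explicit values and $z$-derivatives of the fundamental system at the base point, which here is $z=\alpha$ rather than $z=0$. Concretely, I would set $z=k^2+\alpha$ and expand $a$ as a Laurent/Taylor series in $k$ around $k=0$ (equivalently $z\to\alpha$). Because $a$ may have a simple pole at $k=0$, the relevant expansion begins at order $k^{-1}$; the coefficients of $k^{-1}$, $k^0$, $k^1$, $k^2$ will be governed by $\theta$, $\theta^\qd$, $\phi$, $\phi^\qd$ and their first two $z$-derivatives evaluated at $z=\alpha$. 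The two identities~\eqref{eq:trace01alpha} and~\eqref{eq:trace02alpha} should emerge by matching, respectively, the first two nontrivial orders of this expansion against the analytic expansion obtained from the factorization.

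The second ingredient is the Nevanlinna factorization of $a$ on $\C_+$, analogous to the one used in Lemma~\ref{lemTF0}, but accounting for the pole at zero by including a factor such as $1/k$ (or by factoring out the singular part first). Differentiating $\log a(k)$ once and twice and sending $k\to0$ along the imaginary or real axis would convert the analytic data into the sum over the zeros $\I\kappa_n$ and the principal-value-type integrals $\int_\R (k^2+\alpha)^{-2}\log|a(k)|\,dk$ and $\int_\R k^2(k^2+\alpha)^{-3}\log|a(k)|\,dk$ appearing on the left-hand sides. The terms $\sum_n \kappa_n/(\kappa_n^2-\alpha)$ and $\sum_n \log|(\kappa_n-\sqrt\alpha)/(\kappa_n+\sqrt\alpha)|$ arise precisely from the Blaschke product $\prod_n (\I\kappa_n-k)/(\I\kappa_n+k)$ once one differentiates $\log$ of that product and evaluates the resulting rational expressions at the relevant point; the shift by $\sqrt\alpha$ in the logarithmic terms reflects that one is expanding around $k=\I\sqrt\alpha$-type poles induced by the change of base point. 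I would then solve the resulting small linear system to isolate the two clean combinations stated in the lemma.

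For the right-hand sides, I would compute the required derivatives of $\theta$, $\theta^\qd$, $\phi^\qd$ with respect to $z$ at $z=\alpha$ using the integral-equation method of Proposition~\ref{propFS}; the matrix $Y(z,x)$ satisfies a first-order linear integral equation, so $\dot Y(\alpha,x)$ and $\ddot Y(\alpha,x)$ are obtained by differentiating that equation and evaluating. After the integrations by parts, the weight $1+2\sqrt\alpha x$ appears naturally because the model solution $(1+2\sqrt\alpha x)^{1/2}$ replaces the flat $\E^{\I k x}$ of the $S_0$ case, and the integrands $|\Wr(x)-x/(1+2\sqrt\alpha x)|^2(1+2\sqrt\alpha x)$ and $x/(1+2\sqrt\alpha x)$ emerge as the quadratic and linear invariants. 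The convergence of the principal-value integrals at $k=0$ must be checked via the order of vanishing of $\log|a(k)|$ there, analogous to~\eqref{eqnlogazero}.

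The main obstacle I anticipate is the bookkeeping around the pole of $a$ at the origin: unlike in Lemma~\ref{lemTF0}, where $a(0)=1$ and the first two derivatives vanish, here the Laurent expansion has a genuine singular part, so one must carefully identify which expansion coefficients carry the spectral information and ensure the divergent pieces cancel between the factorization side and the explicit-computation side. A closely related difficulty is justifying the limiting procedure $k\to0$ in the differentiated factorization, since individual terms (for instance the Blaschke contribution $\sum_n 2\I\kappa_n/(\kappa_n^2+k^2)$ and the integral) are finite only after the correct combinations are formed; keeping track of the poles at $\kappa_n=\sqrt\alpha$ that produce the logarithmic terms, and confirming that the final combinations are genuinely real and match the stated right-hand sides, is where the real care is needed. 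Once these two trace formulas are in hand, the subsequent Lieb--Thirring-type bound and absolutely-continuous-spectrum estimate would follow by arguments parallel to Corollary~\ref{corEVest} and Corollary~\ref{corACest}.
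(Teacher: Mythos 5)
Your toolkit is the right one (the representation \eqref{eqnaasthephi_a}, the derivative data from Proposition~\ref{propFS}, a Nevanlinna factorization with an extra singular factor at the origin, and differentiation of $\log a$), but the base point of your expansion is misplaced, and this is precisely where the plan breaks down. You propose to expand $a$ around $k=0$, i.e.\ around $z=k^2+\alpha=\alpha$, and to feed in the first two $z$-derivatives of $\theta$, $\theta^\qd$, $\phi$, $\phi^\qd$ ``evaluated at $z=\alpha$'' via the integral-equation method of Proposition~\ref{propFS}. But that proposition yields \emph{explicit} formulas in terms of $\Wr$ and $\dip$ only at $z=0$, because only there does the differential equation degenerate to $-f''=0$ with the closed-form fundamental system \eqref{eqncsatzero}; at $z=\alpha$ the quantities $\theta(\alpha,R)$, $\dot{\theta}(\alpha,R)$, etc.\ solve integral equations but are not expressible through the coefficient integrals on the right-hand sides of \eqref{eq:trace01alpha}--\eqref{eq:trace02alpha}. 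Your Laurent coefficients of $a$ at $k=0$ therefore could not be matched to $\int_0^\infty \Wr-\Wr_\alpha\,dx$ or to $\int_0^\infty|\Wr-\Wr_\alpha|^2(1+2\sqrt{\alpha}x)\,dx+\int_{[0,\infty)}(1+2\sqrt{\alpha}x)\,d\dip$. The key observation in the paper's proof is that in the variable $k$ the distinguished spectral point $z=0$ corresponds to the \emph{interior} point $k=\I\sqrt{\alpha}$ of $\C_+$: one checks $a(\I\sqrt{\alpha})=1$ and computes $a'(\I\sqrt{\alpha})$, $a''(\I\sqrt{\alpha})$ explicitly from \eqref{eqnaasthephi_a} and Proposition~\ref{propFS}, evaluates the once- and twice-differentiated factorization at $k=\I\sqrt{\alpha}$, and eliminates the constant $\beta$ by taking the modulus of the factorization at that same point. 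This last step is also the true source of the sums $\sum_n\log|(\kappa_n-\sqrt{\alpha})/(\kappa_n+\sqrt{\alpha})|$ — they are the Blaschke factors evaluated at $\I\sqrt{\alpha}$ (note $\kappa_n\neq\sqrt{\alpha}$ since $a(\I\sqrt{\alpha})=1\neq0$) — not any ``pole bookkeeping at $\kappa_n=\sqrt{\alpha}$'' as you suggest. Taking imaginary, respectively real, parts then yields \eqref{eq:trace01alpha}, respectively \eqref{eq:trace02alpha}.

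The limiting procedure $k\to0$ that you flag as the main difficulty is in fact a symptom of the misplaced base point, and it would not merely be delicate but would genuinely fail: on the real line $\log|a(k)|$ has a logarithmic singularity at $k=0$ (since $a$ may have a pole there while $|a|\geq1$ on $\R\setminus\{0\}$ by \eqref{eqnapbalpha}), so the differentiated integral terms $\int_\R(t-k)^{-j}\log|a(t)|\,dt$ blow up as $k\to0$, and there is no analogue of the bound \eqref{eqnlogazero} that rescued the limit in Lemma~\ref{lemTF0}. Evaluating at the interior point $\I\sqrt{\alpha}$, where $a$ is analytic and nonzero, removes any limiting procedure entirely: every term in the differentiated factorization is finite as it stands. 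One point you do handle correctly is the singular part of the factorization: the paper likewise allows a term $\I\gamma/k$ and argues $\gamma=0$ because zero is at most a simple pole of $a$. To repair your proof, keep the factorization and differentiation scheme but move all evaluations from $k=0$ to $k=\I\sqrt{\alpha}$, computing $a(\I\sqrt{\alpha})$, $a'(\I\sqrt{\alpha})$ and $a''(\I\sqrt{\alpha})$ from \eqref{eqnaasthephi_a} together with Proposition~\ref{propFS} at $z=0$.
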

  
  \begin{proof}
  It follows readily from~\eqref{eqnaasthephi_a} and~\eqref{eqncsatzero} that 
   \begin{align*}
     a(\I\sqrt{\alpha}) = 1. 
   \end{align*}
   By differentiating both sides in~\eqref{eqnaasthephi_a}, evaluating at $\I\sqrt{\alpha}$ and using the expressions from Proposition~\ref{propFS}, we also see that  
      \begin{align*}
         a'(\I\sqrt{\alpha}) & = 2\I \sqrt{\alpha}\int_0^\infty \Wr(x) - \Wr_{\alpha}(x)\,dx,
      \end{align*}
     where the function $\Wr_\alpha$ is simply given by  
    \begin{align*}
    \Wr_\alpha(x) = \frac{x}{1+2\sqrt{\alpha}x}, \quad x\in[0,\infty),
  \end{align*}
  as in Example~\ref{exaalpha}. 
      After similar but longer computations, we furthermore get 
      \begin{align*}
         \begin{split} a''(\I\sqrt{\alpha}) & = 4{\sqrt{\alpha}} \int_0^\infty (\Wr(x)-\Wr_{\alpha}(x))^2 (1+2\sqrt{\alpha}x) dx + 4{\sqrt{\alpha}} \int_{[0,\infty)} (1+2\sqrt{\alpha}x) d\dip(x) \\ 
         & \qquad - 4\alpha \biggl(\int_0^\infty \Wr(x) - \Wr_{\alpha}(x)\, dx \biggr)^2  - 2 \int_0^\infty \Wr(x) - \Wr_{\alpha}(x)\, dx. \end{split}
      \end{align*}

 Since the function $a$ is of bounded type in the upper complex half-plane, it admits a Nevanlinna factorization \cite[Theorem~6.13]{roro94} of the form 
   \begin{align}\label{eq:intrepaalpha}
     a(k) = C  \prod_{n=1}^N \frac{\I \kappa_n - k}{\I \kappa_n + k} \exp\biggl\{-\I \beta k + \frac{\I\gamma}{k} + \frac{1}{\pi\I} \int_\R \biggl(\frac{1}{t-k}-\frac{t}{1+t^2}\biggr)\log|a(t)|dt \biggr\} 
   \end{align}
   for all $k\in\C_+$, where $\beta$ and $\gamma$ are real constants and $C$ is a complex constant with modulus one.   
   However, as zero is at most a simple pole of $a$, we infer that the constant $\gamma$ has to be equal to zero.  
      After differentiating \eqref{eq:intrepaalpha}, we obtain
  \begin{align}\label{eq:1st}
    \frac{a'(k)}{a(k)} = \sum_{n=1}^N \frac{2\I \kappa_n}{\kappa_n^2+k^2} -\I\beta + \frac{1}{\pi\I} \int_\R \frac{1}{(t-k)^2} \log|a(t)|dt 
  \end{align}
  for all $k\in\C_+$ close enough to $\I\sqrt{\alpha}$ (so that $a(k)$ is non-zero). 
 Upon differentiating one more time, we also get 
  \begin{align}\label{eq:2nd}
     \frac{a''(k)}{a(k)} - \frac{a'(k)^2}{a(k)^2} = \sum_{n=1}^N \frac{ - 4\I \kappa_n k}{(\kappa_n^2+k^2)^2} +  \frac{2}{\pi\I} \int_\R \frac{1}{(t-k)^3} \log|a(t)|dt,
  \end{align}
  again, as long as $k\in\C_+$ is close enough to $\I\sqrt{\alpha}$. 
   
 Now we evaluate both sides of \eqref{eq:intrepaalpha} at $\I\sqrt{\alpha}$ and take absolute values to obtain
\begin{align*}
1 =  \prod_{n=1}^N \left|\frac{\kappa_n - \sqrt{\alpha}}{\kappa_n + \sqrt{\alpha}}\right| \exp\biggl\{ \beta \sqrt{\alpha} + \frac{\sqrt{\alpha}}{\pi} \int_\R \frac{1}{t^2+{\alpha}}\log|a(t)|dt\biggr\}.
\end{align*}
This gives the following expression for $\beta$ in terms of the scattering data:
\begin{align*}
\beta =  \frac{1}{\sqrt{\alpha}}\sum_{n=1}^N\log \left|\frac{\kappa_n + \sqrt{\alpha}}{\kappa_n - \sqrt{\alpha}}\right| - \frac{1}{\pi} \int_\R \frac{1}{t^2+{\alpha}}\log|a(t)|dt.
\end{align*}
 By evaluating equality~\eqref{eq:1st} at $\I\sqrt{\alpha}$, we next get 
  \begin{align*}
     2\I \sqrt{\alpha}\int_0^\infty \Wr(x) - \Wr_{\alpha}(x)\,dx = & \sum_{n=1}^N \frac{2\I \kappa_n}{\kappa_n^2-\alpha} -\I\beta + \frac{1}{\pi\I} \int_\R \frac{1}{(t-\I\sqrt{\alpha})^2} \log|a(t)|dt. 
      \end{align*}
  Taking imaginary parts and using the expression for $\beta$ yields \eqref{eq:trace01alpha}.
 In a similar manner, the real part of the right-hand side of~\eqref{eq:2nd} evaluated at $\I\sqrt{\alpha}$ becomes 
 \begin{align*}
      \sum_{n=1}^N \frac{4\sqrt{\alpha} \kappa_n }{(\kappa_n^2-\alpha)^2} +  \frac{2}{\pi} \int_\R \frac{3t^2\sqrt{\alpha} - \alpha^{3/2}}{(t^2+{\alpha})^3} \log|a(t)|dt
      \end{align*}
      and the left-hand side 
            \begin{align*}
           & 4{\sqrt{\alpha}} \int_0^\infty (\Wr(x)-\Wr_{\alpha}(x))^2 (1+2\sqrt{\alpha}x) dx + 4{\sqrt{\alpha}} \int_{[0,\infty)} (1+2\sqrt{\alpha}x) d\dip(x) \\ 
         & \qquad  - \frac{2}{\sqrt{\alpha}}\sum_{n=1}^N \frac{ \kappa_n}{\kappa_n^2-\alpha} - \frac{1}{\alpha}\sum_{n=1}^N\log \left|\frac{\kappa_n - \sqrt{\alpha}}{\kappa_n + \sqrt{\alpha}}\right|  -  \frac{2\sqrt{\alpha}}{\pi} \int_\R \frac{1}{(t^2+{\alpha})^2} \log|a(t)|dt, 
      \end{align*}
      where we also made use of equality~\eqref{eq:trace01alpha}. 
      After equating these two expressions, we readily end up with~\eqref{eq:trace02alpha}.
  \end{proof}

We note that the integral term on the left-hand side of the identity~\eqref{eq:trace02alpha} is non-negative in view of~\eqref{eqnapbalpha}.
In particular, this observation will allow us to obtain a Lieb--Thirring-type estimate on the eigenvalues of the corresponding self-adjoint realization below $\alpha$.
To this end, we first point out that there are only finitely many such eigenvalues.
More precisely, we see from~\eqref{eqnfptalpha} and~\eqref{eqnfqdptalpha} that the right-hand side of~\eqref{eqnmJostalpha} is a rational function. 
This implies that the Weyl--Titchmarsh function $m$ has a continuation to a meromorphic function on $\C\backslash[\alpha,\infty)$ with only finitely many poles, none of which is located at zero. 
 Hence, we may enumerate all eigenvalues below $\alpha$ in the following way:
  \begin{align}
    \lambda_{K_-}^- < \dots < \lambda_{1}^- < 0 < \lambda_{1}^+ <\dots < \lambda_{K_+}^+ <\alpha.
\end{align}

\begin{corollary}\label{cor:LTalpha}
We have the estimate
\begin{align}\label{eq:LTalpha}
  \begin{split}
     & \frac{4}{3 \alpha^{3/2}}\sum_{i=1}^{K_-} \biggl(1-\frac{\lambda_i^-}{{\alpha}}\biggr)^{-3/2} + \frac{4}{3\alpha^{3/2}}\sum_{i=1}^{K_+} \biggl(1-\frac{\lambda_i^+}{{\alpha}}\biggr)^{3/2} \\
     & \qquad  \leq \int_0^\infty  \Bigl|\Wr(x) - \frac{x}{1+2\sqrt{\alpha}x} \Bigr|^2 (1+2\sqrt{\alpha}x) dx + \int_{[0,\infty)} (1+2\sqrt{\alpha}x) d\dip(x).  
   \end{split}\end{align}
\end{corollary}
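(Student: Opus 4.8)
The plan is to imitate precisely the argument used for Corollary~\ref{corEVest}, extracting the eigenvalue bound from the second trace formula~\eqref{eq:trace02alpha} in Lemma~\ref{lemTF2}. The starting observation is that the integral term on the left-hand side of~\eqref{eq:trace02alpha} is non-negative by~\eqref{eqnapbalpha}, so it suffices to bound the discrete sum
\begin{align*}
  \frac{1}{2\alpha}\sum_{n=1}^N \frac{\kappa_n^3 + \alpha\kappa_n}{(\kappa_n^2-\alpha)^2} + \frac{1}{4\alpha^{3/2}}\sum_{n=1}^N \log\left|\frac{\kappa_n-\sqrt{\alpha}}{\kappa_n+\sqrt{\alpha}}\right|
\end{align*}
from below by the eigenvalue sum on the left of~\eqref{eq:LTalpha}. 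First I would set up the monotonicity argument: since $m$ is Herglotz--Nevanlinna, the function $\kappa\mapsto -f'(\I\kappa,0-)/((\kappa^2+\alpha^{\phantom{2}}\!\!)\cdots)$ built from~\eqref{eqnmJostalpha} is real-valued, continuous and strictly decreasing in $\kappa$ away from the poles corresponding to eigenvalues, so that between consecutive eigenvalues one can find a value $\kappa$ at which $\I\kappa$ is a zero of $a$, i.e.\ $\kappa = \kappa_{n}$ for some index $n$.

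The key structural point is that the eigenvalues now split into those below zero and those in the window $(0,\alpha)$, corresponding under the substitution $z = k^2+\alpha$ to purely imaginary $k=\I\kappa$ with $\kappa>\sqrt{\alpha}$ and $0<\kappa<\sqrt{\alpha}$ respectively. For the negative eigenvalues $\lambda_i^-$ one has $\kappa_{n(i)} > \sqrt{\alpha}$ and $\lambda_i^- = \alpha - \kappa_{n(i)}^2$, so that $(1-\lambda_i^-/\alpha)^{-3/2} = (\alpha/\kappa_{n(i)}^2)^{3/2} = \alpha^{3/2}/\kappa_{n(i)}^3$; for the eigenvalues $\lambda_i^+$ in $(0,\alpha)$ one instead has $\kappa_{n(i)} < \sqrt{\alpha}$ and $(1-\lambda_i^+/\alpha)^{3/2} = \kappa_{n(i)}^3/\alpha^{3/2}$. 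Interleaving the eigenvalues with the zeros of $a$ via the monotonicity argument then pins each $\lambda_i^\pm$ between consecutive $\kappa$'s, producing the desired one-sided comparison term by term.

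The main obstacle I anticipate is purely arithmetic rather than conceptual: one must check that each summand $\tfrac{4}{3}\alpha^{-3/2}(1-\lambda_i^\pm/\alpha)^{\mp 3/2}$ is dominated by the corresponding contribution of $\kappa_{n(i)}$ in the trace-formula sum, i.e.\ that
\begin{align*}
  \frac{4}{3\alpha^{3/2}}\cdot\frac{\alpha^{3/2}}{\kappa^3}\ \le\ \frac{1}{2\alpha}\,\frac{\kappa^3+\alpha\kappa}{(\kappa^2-\alpha)^2} + \frac{1}{4\alpha^{3/2}}\log\left|\frac{\kappa-\sqrt{\alpha}}{\kappa+\sqrt{\alpha}}\right|
\end{align*}
holds in the appropriate $\kappa$-range (and the analogous inequality for the $\lambda^+$ branch), so that summing over the distinct indices $n(i)$ and then bounding by the full sum over all $n$ yields the claim. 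Establishing these scalar inequalities --- verifying that the somewhat awkward combination of a rational term and a logarithmic term on the right really dominates the clean $\kappa^{-3}$ (resp.\ $\kappa^3$) term on the left for $\kappa>\sqrt{\alpha}$ (resp.\ $0<\kappa<\sqrt{\alpha}$) --- is where the real care is needed, since the $\log$ term changes sign across $\sqrt{\alpha}$. Once these elementary monotonicity/convexity estimates are in hand, the conclusion follows exactly as in Corollary~\ref{corEVest} by invoking~\eqref{eq:trace02alpha} and discarding the non-negative integral term.
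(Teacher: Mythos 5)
Your plan is the paper's proof in outline: the same trace formula \eqref{eq:trace02alpha}, the same Herglotz monotonicity and interleaving of the eigenvalue parameters $\sqrt{\alpha-\lambda_i^\pm}$ with the zeros $\I\kappa_n$ of $a$, and the same final step of enlarging the sum over the matched indices $n_\pm(i)$ to the full sum over all $n$ and discarding the non-negative integral term. Two points need fixing or completing, however. First, your displayed identities $\lambda_i^- = \alpha - \kappa_{n(i)}^2$ and $(1-\lambda_i^-/\alpha)^{-3/2} = \alpha^{3/2}/\kappa_{n(i)}^3$ are false as equalities: the zeros of $a$ are not the eigenvalue parameters but are strictly interlaced with them, so one only obtains the one-sided strict bounds $\kappa_{n_+(i)} > \sqrt{\alpha-\lambda_i^+}$ and $\kappa_{n_-(i)} < \sqrt{\alpha-\lambda_i^-}$; fortunately these go in exactly the direction your subsequent "one-sided comparison" sentence requires, so this is a slip of formulation rather than of substance. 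Also, the logarithm $\log\bigl|\frac{\kappa-\sqrt{\alpha}}{\kappa+\sqrt{\alpha}}\bigr|$ does not change sign across $\sqrt{\alpha}$ --- it is negative for \emph{all} positive $\kappa\neq\sqrt{\alpha}$, which is precisely why the rational term must be shown to dominate it on both branches.

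Second, the scalar inequality you flag as "the real care" is the entire substance of the proof, and your proposal leaves it unproven. It does hold, and the paper closes it as follows. Setting $s = \kappa/\sqrt{\alpha}$ and multiplying through by $4\alpha^{3/2}$, both of your branch inequalities reduce to the single bound $\frac{16}{3}s^{\mp 3} \leq F(s)$, where $F$ is the function from \eqref{eq:funtrace},
\begin{align*}
  F(s) = 2\,\frac{s^3+s}{(s^2-1)^2} + \log\left|\frac{s-1}{s+1}\right|, \quad s\in(0,1)\cup(1,\infty).
\end{align*}
One verifies the symmetry $F(1/s)=F(s)$, which disposes of the $\lambda^+$ branch ($s<1$) once the $s>1$ case is done, and computes $F'(s) = -16s^2/(s^2-1)^3$, so that for $s>1$
\begin{align*}
  F(s) = \int_s^\infty \frac{16 r^2}{(r^2-1)^3}\, dr \geq \int_s^\infty \frac{16}{r^4}\, dr = \frac{16}{3s^3},
\end{align*}
using $r^6 \geq (r^2-1)^3$. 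Note that this also yields $F>0$ everywhere, which you need tacitly when you bound the sum over the matched indices by the full sum over all $n$: the unmatched zeros of $a$ contribute terms that must be non-negative. With this estimate supplied, your argument coincides with the paper's proof of Corollary~\ref{cor:LTalpha}.
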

 
 \begin{proof}
  Since the function $m$ is a Herglotz--Nevanlinna function, we see from~\eqref{eqnmJostalpha} that the function
  \begin{align*}
    \kappa \mapsto \frac{f'(\I\kappa,0-)}{(-\kappa^2+\alpha) f(\I\kappa,0)}
  \end{align*}
  is real-valued, continuous and strictly decreasing for positive $\kappa$ away from the poles 
  \begin{align*}
    \sqrt{\alpha-\lambda_{K_+}^+},\ldots,\sqrt{\alpha-\lambda_{1}^+},\sqrt{\alpha-\lambda_1^-},\ldots,\sqrt{\alpha-\lambda_{K_-}^-}.
  \end{align*}
  Because of this, we can find a positive $\kappa$ such that 
  \begin{align*}
    \frac{f'(\I\kappa,0-)}{(-\kappa^2+\alpha) f(\I\kappa,0)} = \frac{1}{\sqrt{\alpha}-\kappa}
  \end{align*} 
 between each pair of consecutive points in the sequence
   \begin{align*}
    \sqrt{\alpha-\lambda_{K_+}^+},\ldots,\sqrt{\alpha-\lambda_{1}^+},\sqrt{\alpha},\sqrt{\alpha-\lambda_1^-},\ldots,\sqrt{\alpha-\lambda_{K_-}^-}.
  \end{align*} 
 As $\I\kappa$ is a zero of the function $a$ for each such $\kappa$, we conclude that 
 \begin{align*}
    \sqrt{\alpha-\lambda_{K_+}^+}<\kappa_{n_+(K_+)}<\sqrt{\alpha-\lambda_{K_+-1}^+} < \cdots < \sqrt{\alpha-\lambda_{1}^+} < \kappa_{n_+(1)} < \sqrt{\alpha} 
    \end{align*}
    as well as
    \begin{align*}
    \sqrt{\alpha} < \kappa_{n_-(1)} < \sqrt{\alpha-\lambda_1^-} <\cdots<\sqrt{\alpha-\lambda_{K_--1}^-} < \kappa_{n_-(K_-)} < \sqrt{\alpha-\lambda_{K_-}^-}
  \end{align*}
  for some indices $n_+(K_+),\ldots,n_+(1),n_-(1),\ldots,n_-(K_-)\in\{1,\ldots,N\}$.
 
  Now let us consider the function $F$ defined by 
  \begin{align}\label{eq:funtrace}
    F(s) =  {2} \frac{s^3+s}{(s^2-1)^2} +  \log \left|\frac{s - 1}{s + 1}\right|, \quad s\in(0,1)\cup(1,\infty),
 \end{align}
 and first notice that 
\begin{align*}
F(1/s) = F(s), \quad s\in(0,1)\cup(1,\infty).
\end{align*}
 It is also straightforward to see that $F$ is strictly increasing on $(0,1)$ and strictly decreasing on $(1,\infty)$ since we may compute 
\begin{align*}
 F'(s) = -\frac{16s^2}{(s^2-1)^3}, \quad s\in(0,1)\cup(1,\infty). 
\end{align*}
 Moreover, the function $F$ satisfies the bound
\begin{align*}
F(s) = \int_s^\infty \frac{16 r^2}{(r^2-1)^3} dr \geq \int_s^\infty \frac{16}{r^4} dr = \frac{16}{3 s^3} > 0, \quad s\in(1,\infty). 
\end{align*}
 By combining all these facts, we can estimate 
    \begin{align*}
  &  \biggl(1-\frac{\lambda_i^\pm}{{\alpha}}\biggr)^{\pm3/2} <   \biggl(\frac{\kappa_{n_\pm(i)}}{\sqrt{\alpha}}\biggr)^{\pm 3} \leq  \frac{3}{16} F\biggl(\frac{\kappa_{n_\pm(i)}}{\sqrt{\alpha}}\biggr)
 \end{align*}
 for all $i\in\{1,\ldots,K_\pm\}$. 
 This allows us to bound the left-hand side of~\eqref{eq:LTalpha} by 
 \begin{align*}
    \frac{1}{4\alpha^{3/2}} \sum_{i=1}^{K_-} F\biggl(\frac{\kappa_{n_-(i)}}{\sqrt{\alpha}}\biggr) + \frac{1}{4\alpha^{3/2}} \sum_{i=1}^{K_+} F\biggl(\frac{\kappa_{n_+(i)}}{\sqrt{\alpha}}\biggr)  \leq  \frac{1}{4\alpha^{3/2}} \sum_{n=1}^{N} F\biggl(\frac{\kappa_{n}}{\sqrt{\alpha}}\biggr).
 \end{align*}
 It is readily seen that this sum coincides with the first two terms in~\eqref{eq:trace02alpha}, which yields the claim as the integral term on the left-hand side there is non-negative.
\end{proof}

We are now going to use the identity~\eqref{eq:trace02alpha} to estimate the absolutely continuous spectrum of  $(L,\omega,\dip)$. 
 To this end, we first note that we have  
\begin{align}\label{eq:m=ab2}
 (k^2+\alpha)m(k^2+\alpha) = \sqrt{\alpha}-\I k\,\frac{b(k)-a(k)}{b(k) + a(k)} 
\end{align}
 for all $k\in\C_+$ with $k^2+\alpha\in\C\backslash\R$. 
 Since the functions $a$ and $b$ are analytic on all of $\C$ except for zero and satisfy the properties~\eqref{eqnabalpha} and~\eqref{eqnapbalpha} on the real line, one can conclude that the spectrum of $(L,\omega,\dip)$ on the interval $(\alpha,\infty)$ is purely absolutely continuous with the corresponding spectral measure $\mu$ given by 
  \begin{align}
    \mu(B) = \int_B \varrho(\lambda) d\lambda,
  \end{align}
  for every Borel set $B\subseteq(\alpha,\infty)$, where $\varrho$ is defined by 
   \begin{align}\label{eqnrhoabalpha}
    \varrho(\lambda) = \lim_{\varepsilon\rightarrow0} \frac{1}{\pi}\im\, m(\lambda+\I\varepsilon) 
    = \frac{\sqrt{\lambda-\alpha}}{\pi \lambda|b(\sqrt{\lambda - \alpha}) + a(\sqrt{\lambda - \alpha})|^2},   \quad \lambda\in(\alpha,\infty). 
   \end{align}
   We note that the function $\varrho$ is continuous and positive on $(\alpha,\infty)$.
   
\begin{corollary}\label{corACest2}
For every compact subset $\Omega$ of $(\alpha,\infty)$, we have the estimate 
  \begin{align}\label{eqnTFest02}
  \begin{split}
     &  - \frac{1}{\pi}\int_\Omega \log\biggl(\varrho(\lambda) \frac{4\pi\lambda^3}{\alpha^2\sqrt{\lambda-\alpha}}\biggl) \frac{\sqrt{\lambda-\alpha}}{\lambda^3} d\lambda  \\
    & \qquad  \leq  \int_0^\infty \Bigl|\Wr(x) - \frac{x}{1+2\sqrt{\alpha}x} \Bigr|^2  (1+2\sqrt{\alpha}x) dx  + \int_{[0,\infty)} (1+2\sqrt{\alpha}x) d\dip(x).
       \end{split}
  \end{align}
 \end{corollary}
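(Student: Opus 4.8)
The plan is to transcribe the argument for Corollary~\ref{corACest}, adapting the algebra to the shift by $\alpha$ and the rational weight. The starting point is the purely algebraic identity $1 - \frac{b(k)-a(k)}{b(k)+a(k)} = \frac{2a(k)}{a(k)+b(k)}$, valid for every positive $k$. Taking moduli and feeding in the formula~\eqref{eqnrhoabalpha} for $\varrho$ with $\lambda = k^2+\alpha$, I would rewrite
\begin{align*}
\Bigl| 1 - \frac{b(k)-a(k)}{b(k)+a(k)}\Bigr|^2 = \frac{4|a(k)|^2}{|a(k)+b(k)|^2} = \frac{4\pi(k^2+\alpha)}{k}\,|a(k)|^2\,\varrho(k^2+\alpha).
\end{align*}
On the other hand, using the relation $|a(k)|^2 = |b(k)|^2+1$ from~\eqref{eqnapbalpha}, the same computation as before produces the lower bound $|1 - \frac{b(k)-a(k)}{b(k)+a(k)}| \geq \re\bigl(1 - \frac{b(k)-a(k)}{b(k)+a(k)}\bigr) = 1 + \frac{1}{|a(k)+b(k)|^2} \geq 1$. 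Since a modulus which is at least one is dominated by its square, combining the two gives $\frac{1}{|a(k)|^2} \leq \frac{4\pi(k^2+\alpha)}{k}\varrho(k^2+\alpha)$, which in the variable $\lambda = k^2+\alpha$ reads $|a(\sqrt{\lambda-\alpha})|^{-2} \leq 4\pi\lambda\,(\lambda-\alpha)^{-1/2}\varrho(\lambda)$.

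In contrast to the $\alpha=0$ case, which required an $\Omega$-dependent constant $C_\Omega$, here I would simply use $\lambda \leq \lambda^3/\alpha^2$, which holds for every $\lambda > \alpha$ because then $\lambda^2 > \alpha^2$. This upgrades the previous estimate to
\begin{align*}
\frac{1}{|a(\sqrt{\lambda-\alpha})|^2} \leq \varrho(\lambda)\,\frac{4\pi\lambda^3}{\alpha^2\sqrt{\lambda-\alpha}}, \quad \lambda\in\Omega,
\end{align*}
with no dependence on $\Omega$. Taking logarithms, multiplying by the non-negative weight $\sqrt{\lambda-\alpha}\,\lambda^{-3}$ and integrating over $\Omega$ then yields
\begin{align*}
-\frac{1}{\pi}\int_\Omega \log\Bigl(\varrho(\lambda)\frac{4\pi\lambda^3}{\alpha^2\sqrt{\lambda-\alpha}}\Bigr)\frac{\sqrt{\lambda-\alpha}}{\lambda^3}\,d\lambda \leq \frac{1}{\pi}\int_\Omega \log|a(\sqrt{\lambda-\alpha})|^2\,\frac{\sqrt{\lambda-\alpha}}{\lambda^3}\,d\lambda.
\end{align*}
The substitution $k = \sqrt{\lambda-\alpha}$ turns the weight $\sqrt{\lambda-\alpha}\,\lambda^{-3}\,d\lambda$ into $2k^2(k^2+\alpha)^{-3}\,dk$, so the right-hand side becomes $\frac{2}{\pi}\int \log|a(k)|^2\,k^2(k^2+\alpha)^{-3}\,dk$ over the image of $\Omega$. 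Since $\log|a|\geq 0$ on the real axis, I can enlarge the range to all of $(0,\infty)$ and then use the symmetry $|a(-k)| = |a(k)|$ from~\eqref{eqnabalpha} to identify the result with $\frac{2}{\pi}\int_\R k^2(k^2+\alpha)^{-3}\log|a(k)|\,dk$, precisely the integral term in the trace formula~\eqref{eq:trace02alpha}.

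To conclude I would invoke Lemma~\ref{lemTF2}: the right-hand side of~\eqref{eqnTFest02} is exactly the right-hand side of~\eqref{eq:trace02alpha}, so it suffices to show that the two eigenvalue sums there are non-negative. This is the one genuinely new point relative to the $\alpha=0$ case, where the lone sum $\tfrac{4}{3}\sum_n\kappa_n^{-3}$ is manifestly non-negative. Here the logarithmic sum $\tfrac{1}{4\alpha^{3/2}}\sum_n\log|\frac{\kappa_n-\sqrt{\alpha}}{\kappa_n+\sqrt{\alpha}}|$ is negative, and non-negativity only emerges after combining it with the rational sum: the two together equal $\frac{1}{4\alpha^{3/2}}\sum_n F(\kappa_n/\sqrt{\alpha})$ with $F$ as in~\eqref{eq:funtrace}, and the strict positivity $F>0$ on $(0,1)\cup(1,\infty)$ was already established in the proof of Corollary~\ref{cor:LTalpha} (note $\kappa_n\neq\sqrt{\alpha}$, since $a(\I\sqrt{\alpha})=1$). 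Recognizing this $F$-structure is the main thing to get right; every other step is a routine transcription of the $\alpha=0$ argument.
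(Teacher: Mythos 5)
Your proposal is correct and follows essentially the same route as the paper's own proof: the identity $1-\frac{b-a}{b+a}=\frac{2a}{a+b}$ combined with the real-part lower bound, the crude estimate $\lambda\leq\lambda^3/\alpha^2$ on $(\alpha,\infty)$, the substitution $k=\sqrt{\lambda-\alpha}$ with extension to $\R$ via $\log|a|\geq 0$ and the symmetry \eqref{eqnabalpha}, and finally the non-negativity of the first two terms of \eqref{eq:trace02alpha} through the positivity of the function $F$ from \eqref{eq:funtrace}. Your parenthetical check that $\kappa_n\neq\sqrt{\alpha}$ (because $a(\I\sqrt{\alpha})=1$) is a small point the paper leaves implicit, but otherwise the arguments coincide step for step.
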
 
 
 \begin{proof}
   For every positive $k$, we first compute that 
     \begin{align*}
       \left| 1- \frac{b(k)-a(k)}{b(k)+a(k)}\right|^2 =  \frac{4 |a(k)|^2}{|b(k)+a(k)|^2} = 4\pi \frac{k^2+\alpha}{k} \varrho(k^2+\alpha) |a(k)|^2
  \end{align*}
  and on the other side that 
       \begin{align*}
       \left| 1- \frac{b(k)-a(k)}{b(k)+a(k)}\right|  \geq   \re\left( 1- \frac{b(k)-a(k)}{b(k)+a(k)}\right) = 1+\frac{1}{|b(k)+a(k)|^2} \geq 1.
  \end{align*}
  In combination, this gives the bound 
  \begin{align*}
    \frac{1}{|a(\sqrt{\lambda-\alpha})|^2}  \leq  \frac{4\pi\lambda}{\sqrt{\lambda-\alpha}} \varrho(\lambda) \leq   \frac{4\pi\lambda^3}{\alpha^2\sqrt{\lambda-\alpha}} \varrho(\lambda)
  \end{align*}  
  as long as $\lambda\in\Omega$, which allows us to estimate the integral  
  \begin{align*}
    - \frac{1}{\pi} \int_\Omega \log\biggl(\varrho(\lambda) \frac{4\pi\lambda^3}{\alpha^2\sqrt{\lambda-\alpha}}\biggl) \frac{\sqrt{\lambda-\alpha}}{\lambda^3} d\lambda \leq \frac{1}{\pi} \int_\Omega \log|a(\sqrt{\lambda-\alpha})|^2 \frac{\sqrt{\lambda-\alpha}}{\lambda^3}d\lambda.
   \end{align*}
   Upon employing a substitution, we can further bound this by 
   \begin{align*}
       \frac{2}{\pi} \int_{\sqrt{\min\Omega-\alpha}}^{\sqrt{\max\Omega-\alpha}} \log|a(k)|^2 \frac{k^2}{(k^2+\alpha)^3} dk \leq \frac{2}{\pi} \int_{\R} \log|a(k)| \frac{k^2}{(k^2+\alpha)^3} dk.
  \end{align*}
 Now it remains to notice that the sum of the first two terms in~\eqref{eq:trace02alpha} is non-negative since the function $F$ defined in~\eqref{eq:funtrace} takes positive values. 
 \end{proof}

 We are now ready to prove our second main result.
     
\begin{proof}[Proof of Theorem~\ref{thmalpha}]
   Let us assume for now that $S$ is a generalized indefinite string $(L,\omega,\dip)$ such that $L$ is infinite and 
    \begin{align*}
      \int_0^\infty \Bigl| \Wr(x) - \frac{x}{1+2\sqrt{\alpha}x}\Bigr|^2 x\, dx + \int_{[0,\infty)}  x\, d\dip(x) & < \infty,
    \end{align*}
    where $\Wr$ is the normalized anti-derivative of $\omega$. 
  We are first going to construct a suitable approximating sequence of generalized indefinite strings $(L_n,\omega_n,\dip_n)$ from the set $\mathcal{F}_\alpha$. 
  For every $n\in\N$, let $L_n$ be infinite and choose $R_n>n$ such that 
  \begin{align*}
    \int_{R_n}^\infty \Bigl|\Wr(x)-\frac{x}{1+2\sqrt{\alpha}x}\Bigr|^2 x\, dx < \frac{1}{n}. 
  \end{align*}
  We can then find a real-valued function $\Wr_n$ on $[0,\infty)$ which is piecewise constant on the interval $[0,R_n]$ with 
  \begin{align*}
   \int_0^{R_n} |\Wr_n(x)-\Wr(x)|^2 dx & < \frac{1}{n R_n}
  \end{align*}
  and which is given explicitly by 
  \begin{align*}
    \Wr_n(x) & = \frac{x}{1+2\sqrt{\alpha}x}
   \end{align*}
   for all $x>R_n$. 
  The distribution $\omega_n$ is now defined in such a way that the corresponding normalized anti-derivative coincides with $\Wr_n$ almost everywhere. 
  Apart from this, we are able to find a non-negative Borel measure $\dip_n$ which is supported on a finite set contained in $[0,R_n)$ with 
  \begin{align*}
   \int_{[0,\infty)} d\dip_n & = \int_{[0,\infty)} d\dip, & \int_{[0,\infty)} x\, d\dip_n(x) & \leq \int_{[0,\infty)} x\, d\dip(x)
  \end{align*}
  and such that for almost every $x\in[0,\infty)$ we have 
  \begin{align*}
    \int_{[0,x)} d\dip_n \rightarrow \int_{[0,x)} d\dip, \qquad n\rightarrow \infty.
  \end{align*} 
  Note that by construction, there is a positive constant $M$ such that  
  \begin{align}\label{eqnNconvalpha}
     \int_0^\infty \Bigl|\Wr_n(x)-\frac{x}{1+2\sqrt{\alpha}x}\Bigr|^2 (1+2\sqrt{\alpha}x) dx + \int_{[0,\infty)} (1+2\sqrt{\alpha}x) d\dip_n(x) \leq M 
    \end{align}
    for all $n\in\N$. 
  Furthermore, it follows readily from \cite[Proposition~6.2]{IndefiniteString} that the corresponding Weyl--Titchmarsh functions $m_n$ converge locally uniformly to $m$. 
  Thus the associated spectral measures $\mu_n$ certainly satisfy 
  \begin{align}\label{eqnmuconvalpha}
   \int_\R g(\lambda)d\mu_n(\lambda) \rightarrow \int_\R g(\lambda) d\mu(\lambda), \qquad n\rightarrow\infty,
  \end{align}
  for every continuous function $g$ on $\R$ with compact support. 

   In order to prove that the essential spectrum of $S$ is restricted to $[\alpha,\infty)$, let $I$ be a compact interval in $(-\infty,\alpha)$. 
   Because of the estimate in Corollary~\ref{cor:LTalpha} and the bound in~\eqref{eqnNconvalpha}, we see that there is an integer $K_I$ such that $(L_n,\omega_n,\dip_n)$ has at most $K_I$ eigenvalues in the interval $I$ for every $n\in\N$. 
   It now follows from the convergence of the measures $\mu_n$ in~\eqref{eqnmuconvalpha} that the limit measure $\mu$ is supported on a finite set on $I$, which implies that $S$ has at most finitely many eigenvalues in $I$.  
  Since the interval $I$ was arbitrary, we conclude that the essential spectrum of $S$ is necessarily contained in $[\alpha,\infty)$. 
  
  Now take a compact set $\Omega\subset(\alpha,\infty)$ of positive Lebesgue measure.
   Due to  the convergence of the measures $\mu_n$ in~\eqref{eqnmuconvalpha} we have (see \cite[Theorem~30.2]{ba01})
  \begin{align*}
   \mu(\Omega) \geq \limsup_{n\rightarrow\infty} \mu_n(\Omega) = \limsup_{n\rightarrow\infty} \int_\Omega \varrho_n(\lambda)d\lambda,
  \end{align*}
  where the functions $\varrho_n$ are given as in~\eqref{eqnrhoabalpha}. 
 An application of Jensen's inequality \cite[Theorem~3.3]{ru74} then furthermore yields
 \begin{align*}
  \mu(\Omega) & \geq \limsup_{n\rightarrow\infty} D_\Omega \exp\biggl\{\frac{\alpha^2}{4\pi D_\Omega} \int_\Omega \log\biggl(\varrho_n(\lambda)\frac{4\pi\lambda^3}{\alpha^2\sqrt{\lambda-\alpha}}\biggr) \frac{\sqrt{\lambda-\alpha}}{\lambda^3}  d\lambda\biggr\}, 
 \end{align*}
 where $D_\Omega$ is a positive constant defined by
 \begin{align*}
  D_\Omega= \int_\Omega   \frac{\alpha^2\sqrt{\lambda-\alpha}}{4\pi\lambda^3}  d\lambda.
 \end{align*}
  In view of the estimate in Corollary~\ref{corACest2} and the bound in~\eqref{eqnNconvalpha}, we conclude that 
   \begin{align*}
  \mu(\Omega) & \geq  D_\Omega \E^{\frac{-\alpha^2 M}{4 D_\Omega}} > 0.
 \end{align*}
  Since all Borel measures on $\R$ are regular, this readily implies that $\mu(\Omega)$ is positive for every Borel set $\Omega\subseteq[\alpha,\infty)$ of positive Lebesgue measure. 
  Thus, we have finally verified that the essential spectrum of $S$ coincides with the interval $[\alpha,\infty)$ and the absolutely continuous spectrum of $S$ is essentially supported on $[\alpha,\infty)$. 
  
  In order to finish the proof of Theorem~\ref{thmalpha}, let us suppose that $S$ is a generalized indefinite string $(L,\omega,\dip)$ such that  $L$ is infinite and~\eqref{eqnCondSalpha} holds for a real constant $c$ and a positive constant $\eta$.
  We consider the generalized indefinite string $({L},\tilde{\omega},\tilde{\dip})$, where $\tilde{\omega}$ is defined via its normalized anti-derivative $\tilde{\Wr}$ by~\eqref{eqnWrtilde} and $\tilde{\dip} = \eta^{-2} \dip$.
  Since $(L,\tilde{\omega},\tilde{\dip})$ satisfies the assumptions imposed before, we infer that the essential spectrum of $(L,\tilde{\omega},\tilde{\dip})$ coincides with the interval $[\alpha,\infty)$ and its absolutely continuous spectrum is essentially supported on $[\alpha,\infty)$. 
  However, since the corresponding Weyl--Titchmarsh functions $m$ and $\tilde{m}$ are related via~\eqref{eqnmtilde}, we see that the essential spectrum of $S$ coincides with the interval $[\alpha/\eta,\infty)$ and its absolutely continuous spectrum is essentially supported on $[\alpha/\eta,\infty)$.
\end{proof}

 \section{The conservative Camassa--Holm flow}\label{secAPP}

  In this section, we are going to demonstrate how our results apply to the isospectral problem of the conservative Camassa--Holm flow. 
  To this end, let $u$ be a real-valued function in $H^1_\loc[0,\infty)$ and $\dip$ be a non-negative Borel measure on $[0,\infty)$. 
  We define the distribution $\omega$ in $H^{-1}_\loc[0,\infty)$ by  
\begin{align}\label{eqnDefomega}
 \omega(h) = \int_0^\infty u(x)h(x)dx + \int_0^\infty u'(x)h'(x)dx, \quad h\in H^1_\cc[0,\infty),
\end{align}
so that $\omega = u - u''$ in a distributional sense. 
 Now the isospectral problem of the conservative Camassa--Holm flow is associated with the differential equation
 \begin{align}\label{eqnCHISP}
 -g'' + \frac{1}{4} g = z\, \omega\, g + z^2 \dip\, g,
\end{align}
where $z$ is a spectral parameter.
 Just like for generalized indefinite strings, this differential equation has to be understood in a weak sense in general:   
  A solution of~\eqref{eqnCHISP} is a function $g\in H^1_{\loc}[0,\infty)$ such that 
 \begin{align}\label{eqnDEweakform}
   \Delta_g h(0) + \int_{0}^\infty g'(x) h'(x) dx + \frac{1}{4} \int_0^\infty g(x)h(x)dx = z\, \omega(gh) + z^2 \dip(g h) 
 \end{align} 
 for some constant $\Delta_g\in\C$ and every function $h\in H^1_\cc[0,\infty)$.
 For such a solution $g$, the constant $\Delta_g$ is uniquely determined and will be denoted with $g'(0-)$. 
 
 We are first going to show that it is always possible to transform the differential equation~\eqref{eqnCHISP} into the differential equation
    \begin{align}\label{eqntildeString}
    - f'' = z\,\omega_\Sr f + z^2\dip_\Sr f
   \end{align}
   for some corresponding generalized indefinite string $(\infty,\omega_\Sr,\dip_\Sr)$.
 To this end, let us  introduce the diffeomorphism $\Sr\colon[0,\infty)\rightarrow[0,\infty)$ by  
 \begin{align}\label{eq:Sdiff}
  \Sr(t) = \log(1+t), \quad t\in[0,\infty),
 \end{align} 
 and note that the inverse of $\Sr$ is simply given by 
  \begin{align}
  \Sr^{-1}(x) = \E^{x} - 1, \quad x\in[0,\infty).
 \end{align} 
Next we define a real-valued measurable function $\Wr_\Sr$ on $[0,\infty)$ such that   
 \begin{align}\label{eqnDefa}
   \Wr_\Sr(t)  =  u(0) - \frac{u'(\Sr(t))+ u(\Sr(t))}{1+t}  
 \end{align}
 for almost all $t\in[0,\infty)$, where we note that the right-hand side is well-defined almost everywhere.
 It follows readily that the function $\Wr_\Sr$ is locally square integrable, so that we can find a real-valued distribution $\omega_\Sr$ in $H^{-1}_\loc[0,\infty)$ which has $\Wr_\Sr$ as its normalized anti-derivative. 
 Furthermore, the non-negative Borel measure $\dip_\Sr$ on $[0,\infty)$ is given by setting 
 \begin{align}\label{eqnDefbeta}
   \dip_\Sr(B) =  \int_B \frac{1}{1+t}\, d\dip\circ \Sr(t)  = \int_{\Sr(B)} \E^{-x} d\dip(x)
 \end{align}
 for every Borel set $B\subseteq[0,\infty)$.
 This defines a generalized indefinite string $(\infty,\omega_\Sr,\dip_\Sr)$ whose relation to the differential equation~\eqref{eqnCHISP} we are going to describe now. 

 \begin{lemma}\label{lem:sol=sol}
   A function $g$ is a solution of the differential equation~\eqref{eqnCHISP} if and only if the function $f$ defined by 
   \begin{align}\label{eqnfg}
     f(t) =   g(\Sr(t)) \sqrt{1+t}, \quad t\in[0,\infty),
   \end{align}
   is a solution of the differential equation~\eqref{eqntildeString}. 
 \end{lemma}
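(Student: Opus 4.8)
The plan is to verify the equivalence directly on the level of the weak formulations, transporting the test functions through the same Liouville-type substitution that relates $g$ and $f$. To each $h\in H^1_\cc[0,\infty)$ I associate $\tilde{h}(t)=h(\Sr(t))\sqrt{1+t}$; since $\Sr$ is a smooth diffeomorphism and $\sqrt{1+t}$ is smooth and bounded away from zero on compact sets, this is a bijection of $H^1_\cc[0,\infty)$ onto itself, with inverse $h(x)=\tilde{h}(\Sr^{-1}(x))\E^{-x/2}$. Two elementary facts will be used throughout: first, $\Sr(0)=0$ gives $\tilde{h}(0)=h(0)$ and $f(0)=g(0)$; second, differentiating $g(x)=f(t)(1+t)^{-1/2}$ with $x=\Sr(t)$ and $\frac{dt}{dx}=1+t$ yields $g'(x)=\sqrt{1+t}\,f'(t)-\tfrac{1}{2}(1+t)^{-1/2}f(t)$, and likewise for $h'$ in terms of $\tilde{h}$. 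The goal is then to match the defining identity~\eqref{eqnDEweakform} for $g$ tested against $h$, term by term, with the weak form of~\eqref{eqntildeString} for $f$ tested against $\tilde{h}$.

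For the principal part together with the potential I would substitute $x=\Sr(t)$ (so $dx=(1+t)^{-1}dt$) and insert the above expressions for $g'$, $h'$, $g$, $h$. The cross terms combine into $-\tfrac{1}{2(1+t)}(f\tilde{h})'$, which I integrate by parts; the resulting $(1+t)^{-2}f\tilde{h}$ contributions coming from $g'h'$, from the potential term $\tfrac14 gh$, and from this integration by parts cancel exactly (the coefficients being $\tfrac14+\tfrac14-\tfrac12=0$), which is precisely where the constant $\tfrac14$ in~\eqref{eqnCHISP} and the weight $\sqrt{1+t}$ in~\eqref{eqnfg} are used. What remains is $\int_0^\infty f'(t)\tilde{h}'(t)\,dt+\tfrac{1}{2}f(0)\tilde{h}(0)$. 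The measure term is easier: evaluating $gh$ at $x=\Sr(t)$ and using the definition~\eqref{eqnDefbeta} of $\dip_\Sr$ as the weighted pushforward $\int\Phi\,d\dip_\Sr=\int\Phi(\Sr^{-1}(x))\E^{-x}\,d\dip(x)$ gives $\dip(gh)=\dip_\Sr(f\tilde{h})$ at once.

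The crux is the $\omega$-term, and I expect it to be the main obstacle because $\omega=u-u''$ is only a distribution and $\Wr_\Sr$ is given by the somewhat opaque formula~\eqref{eqnDefa}. Here I would start from $\omega_\Sr(f\tilde{h})=-\int_0^\infty\Wr_\Sr(t)\,(f\tilde{h})'(t)\,dt$, observe that with $w:=gh$ one has $f\tilde{h}=(1+t)\,w(\Sr(t))$ and hence the clean identity $(f\tilde{h})'(t)=w(\Sr(t))+w'(\Sr(t))$, and then change variables to $x=\Sr(t)$. Substituting the explicit $\Wr_\Sr(t)=u(0)-\frac{u'(\Sr(t))+u(\Sr(t))}{1+t}$ turns the integral into $-\int_0^\infty\big(u(0)\E^{x}-(u'+u)\big)(w+w')\,dx$. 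The term carrying $u(0)$ is a total derivative $u(0)\int_0^\infty(\E^{x}w)'\,dx$ and produces the boundary value $u(0)w(0)$; integrating the remaining $\int_0^\infty(u'+u)(w+w')\,dx$ by parts produces a compensating $-u(0)w(0)$, so that both boundary contributions cancel and one is left with exactly $\int_0^\infty u\,w\,dx+\int_0^\infty u'\,w'\,dx=\omega(gh)$ by~\eqref{eqnDefomega}.

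Collecting the three pieces, the identity~\eqref{eqnDEweakform} for $g$ becomes $\big(g'\NL+\tfrac12 g(0)\big)\tilde{h}(0)+\int_0^\infty f'\tilde{h}'\,dt=z\,\omega_\Sr(f\tilde{h})+z^2\dip_\Sr(f\tilde{h})$, which is precisely the statement that $f$ solves~\eqref{eqntildeString} with $f'\NL=g'\NL+\tfrac12 g(0)$; this relation is independently confirmed by evaluating the derivative formula for $g'$ at $t=0$. Since $h\mapsto\tilde{h}$ is a bijection, quantifying over all test functions shows the two weak equations are equivalent, yielding both implications of the lemma. Finally, the memberships $g\in H^1_\loc[0,\infty)$ and $f\in H^1_\loc[0,\infty)$ correspond to one another because the substitution~\eqref{eqnfg} and its inverse preserve local $H^1$-regularity.
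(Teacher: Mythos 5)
Your proposal is correct and follows essentially the same route as the paper: both transport test functions through the substitution $h\mapsto h(\Sr(\ledot))\sqrt{1+\ledot}$ and match the weak formulations of~\eqref{eqnCHISP} and~\eqref{eqntildeString} term by term (Dirichlet-plus-potential part, $\omega$-part via the normalized anti-derivative $\Wr_\Sr$, and $\dip$-part via the pushforward~\eqref{eqnDefbeta}), arriving at the same boundary identification $f'\NL=g'\NL+\tfrac{1}{2}g(0)$. The only differences are presentational: the paper expands $f'h_\Sr'$ in terms of $g'$ and $h'$ directly, whereas you expand in the opposite direction and cancel via an integration by parts, and you write out in full the $\omega$-term computation that the paper compresses into ``one computes that''.
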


 \begin{proof}
   Let us suppose that two functions $f$ and $g$ on $[0,\infty)$ are related via~\eqref{eqnfg}. 
   We first note that $f$ belongs to $H^1_\loc[0,\infty)$ if and only if so does $g$. 
   In this case, for a given function $h_\Sr$ in $H^1_\cc[0,\infty)$, a substitution yields 
   \begin{align*}
     \int_0^\infty f'(t)h_\Sr'(t)dt & = \int_0^\infty \biggl(g'(\Sr(t))+\frac{g(\Sr(t))}{2}\biggr) \biggl(h'(\Sr(t)) + \frac{h(\Sr(t))}{2}\biggr) \Sr'(t) dt \\
      & = \int_0^\infty g'(x)h'(x)dx + \frac{1}{4} \int_0^\infty g(x)h(x)dx - \frac{1}{2} g(0)h(0),
   \end{align*} 
   where the functions $h$ and $h_\Sr$ in $H^1_\cc[0,\infty)$ are related by 
   \begin{align*}
     h(x) &=  h_\Sr(\Sr^{-1}(x))\E^{-\frac{x}{2}}, \quad x\in[0,\infty), &
     h_\Sr(t) & = h(\Sr(t))\sqrt{1+t}, \quad t\in[0,\infty).
    \end{align*}
    Furthermore, one computes that
   \begin{align*}
     \int_0^\infty \Wr_\Sr(t) (fh_\Sr)'(t)dt & = - \int_0^\infty u(x)g(x) h(x)dx - \int_0^\infty u'(x)(gh)'(x)dx
   \end{align*} 
   as well as
   \begin{align*}
     \int_{[0,\infty)} fh_\Sr\, d\dip_\Sr & = \int_{[0,\infty)} g(\Sr(t))h(\Sr(t)) d\dip\circ\Sr(t) =  \int_{[0,\infty)} gh\, d\dip.
   \end{align*}
   With the help of these identities, the claim follows readily from the very definition of the respective solutions.
 \end{proof}
 
  In conjunction with \cite[Lemma~4.2]{IndefiniteString} and a simple substitution, this relation readily provides the following result. 
 
 \begin{corollary}\label{cor:psi=psi}
   If $z$ belongs to $\C\backslash\R$, then there is an (up to scalar multiples) unique non-trivial solution $\psi$ of the differential equation~\eqref{eqnCHISP} such that $\psi$ lies in $H^1[0,\infty)$ and $L^2([0,\infty);\dip)$. 
 \end{corollary}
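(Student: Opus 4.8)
The plan is to transfer the Weyl--Titchmarsh existence and uniqueness statement \cite[Lemma~4.2]{IndefiniteString} for the generalized indefinite string $(\infty,\omega_\Sr,\dip_\Sr)$ over to the differential equation~\eqref{eqnCHISP} by means of the correspondence $g\leftrightarrow f$ from Lemma~\ref{lem:sol=sol}. By \cite[Lemma~4.2]{IndefiniteString}, for every $z\in\C\backslash\R$ there is an (up to scalar multiples) unique non-trivial solution $\tilde\psi$ of~\eqref{eqntildeString} lying in $\dot{H}^1[0,\infty)$ and $L^2([0,\infty);\dip_\Sr)$. Since the map $g\mapsto f$ in~\eqref{eqnfg} is linear, takes solutions of~\eqref{eqnCHISP} bijectively onto solutions of~\eqref{eqntildeString} and clearly preserves non-triviality (as $\sqrt{1+t}>0$ and $\Sr$ is a bijection), it suffices to check that this map carries the class $H^1[0,\infty)\cap L^2([0,\infty);\dip)$ exactly onto $\dot{H}^1[0,\infty)\cap L^2([0,\infty);\dip_\Sr)$. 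The desired solution is then the preimage $\psi(x)=\tilde\psi(\Sr^{-1}(x))\E^{-x/2}$ of $\tilde\psi$, and its uniqueness up to scalar multiples follows immediately from that of $\tilde\psi$.

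For the weighted $L^2$-condition, the same substitution that appears in the proof of Lemma~\ref{lem:sol=sol} yields
\[
 \int_{[0,\infty)} |f|^2\,d\dip_\Sr = \int_{[0,\infty)} |g|^2\,d\dip,
\]
so that $f\in L^2([0,\infty);\dip_\Sr)$ if and only if $g\in L^2([0,\infty);\dip)$. For the $\dot{H}^1$-condition, differentiating~\eqref{eqnfg} gives $f'(t)=(1+t)^{-1/2}\bigl(g'(\Sr(t))+\tfrac12 g(\Sr(t))\bigr)$, and the substitution $x=\Sr(t)$ then yields
\[
 \int_0^\infty |f'(t)|^2\,dt = \int_0^\infty \Bigl|g'(x)+\tfrac12 g(x)\Bigr|^2\,dx.
\]
Hence $f\in\dot{H}^1[0,\infty)$ precisely when $g'+\tfrac12 g\in L^2[0,\infty)$, and it remains to identify the latter condition with $g\in H^1[0,\infty)$.

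The main point, and the only step that is not a routine calculation, is therefore the implication that $g'+\tfrac12 g\in L^2[0,\infty)$ forces $g\in L^2[0,\infty)$; the converse and the subsequent fact that $g'=(g'+\tfrac12 g)-\tfrac12 g\in L^2[0,\infty)$ are immediate. To establish it, I would set $\phi=g'+\tfrac12 g$ and integrate the elementary identity $(\E^{x/2}g)'=\E^{x/2}\phi$ to obtain the representation
\[
 g(x) = \E^{-x/2}g(0) + \int_0^x \E^{-(x-t)/2}\phi(t)\,dt, \quad x\in[0,\infty).
\]
The first term lies in $L^2[0,\infty)$, while the integral is the restriction to $[0,\infty)$ of the convolution of $\phi\in L^2[0,\infty)$ with the kernel $\E^{-s/2}$ supported on $[0,\infty)$, whose $L^1$-norm equals $2$; Young's inequality then bounds its $L^2$-norm by $2\|\phi\|_{L^2}$. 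Consequently $g\in L^2[0,\infty)$, and combining the two integrability correspondences completes the identification of the solution classes and hence the proof.
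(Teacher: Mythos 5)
Your proof is correct, and its skeleton is the same as the paper's: both arguments transfer \cite[Lemma~4.2]{IndefiniteString} through the correspondence \eqref{eqnfg} of Lemma~\ref{lem:sol=sol}, using the identities $\int_{[0,\infty)}|f|^2\,d\dip_\Sr=\int_{[0,\infty)}|g|^2\,d\dip$ and $\int_0^\infty|f'(t)|^2\,dt=\int_0^\infty|g'(x)+\tfrac12 g(x)|^2\,dx$, so that everything reduces to showing that $g'+\tfrac12 g\in L^2[0,\infty)$ if and only if $g\in H^1[0,\infty)$. You genuinely diverge only at that crux step. The paper expands $\int_0^R|g'+\tfrac12 g|^2$ by an integration by parts, which produces the boundary term $\tfrac12\bigl(|g(R)|^2-|g(0)|^2\bigr)$, and disposes of it by observing that $g$ is bounded, since $f\in\dot{H}^1[0,\infty)$ grows at most like a square root; you instead write $g$ via the Duhamel representation $g(x)=\E^{-x/2}g(0)+\int_0^x\E^{-(x-t)/2}\phi(t)\,dt$ with $\phi=g'+\tfrac12 g$ and apply Young's inequality with the kernel $\E^{-s/2}\in L^1[0,\infty)$ of norm $2$. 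Both are sound: your version is self-contained, avoids any growth estimate for $\dot{H}^1$-functions, and yields the explicit bound $\|g\|_{L^2}\le|g(0)|+2\|\phi\|_{L^2}$, after which $g'=\phi-\tfrac12 g\in L^2[0,\infty)$ as you say; the paper's identity instead encodes the quantitative relation $\|g'\|_{L^2}^2+\tfrac14\|g\|_{L^2}^2\le\|f'\|_{L^2}^2+\tfrac12|g(0)|^2$ once the nonpositive contribution $-\tfrac12|g(R)|^2$ is discarded. The remaining points of your argument --- linearity and non-triviality preservation of the map $g\mapsto f$ and the transfer of uniqueness --- match the paper exactly.
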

 
 \begin{proof}
  Let us suppose that two functions $f$ and $g$ in $H^1_\loc[0,\infty)$ are related via~\eqref{eqnfg}.
  Then the function $f$ lies in  $\dot{H}^1[0,\infty)$ if and only if the function $g$ lies in $H^1[0,\infty)$.
  In fact, if $f$ lies in $\dot{H}^1[0,\infty)$, then a substitution shows that 
  \begin{align*}
    \int_0^\infty |f'(t)|^2 dt  = \int_0^\infty \Bigl| g'(x) + \frac{1}{2}g(x)\Bigr|^2 dx < \infty. 
  \end{align*}
  Now upon noting that for $R>0$ we have 
  \begin{align*}
    \int_0^R \Bigl| g'(x) + \frac{1}{2}g(x)\Bigr|^2 dx = \int_0^R |g'(x)|^2 dx + \frac{1}{4} \int_0^R |g(x)|^2 dx +  \frac{|g(R)|^2 - |g(0)|^2}{2}, 
  \end{align*}
  we see that $g$ lies in $H^1[0,\infty)$ since it is bounded, which follows because $f$ grows at most like a square root. 
  The converse implication is straightforward. 
  Moreover, we easily see that the function $f$ lies in $L^2([0,\infty);\dip_\Sr)$ if and only if the function $g$ lies in $L^2([0,\infty);\dip)$.
   Now the claim follows readily from~\cite[Lemma~4.2]{IndefiniteString}. 
 \end{proof}

 This result allows us to define the Weyl--Titchmarsh function $m$ associated with the spectral problem~\eqref{eqnCHISP}  by 
 \begin{align}
 m(z) =  \frac{\psi'\NLz}{z\psi(z,0)},\quad z\in\C\backslash\R,
 \end{align} 
 where $\psi(z,\redot)$ is a non-trivial solution of the differential equation~\eqref{eqnCHISP} which lies in $H^1[0,\infty)$ and $L^2([0,\infty);\dip)$. 
 In view of Lemma \ref{lem:sol=sol}, we readily compute that 
 \begin{align}\label{eqnmms}
   m(z) = m_\Sr(z) - \frac{1}{2z}, \quad z\in\C\backslash\R, 
 \end{align}
where $m_\Sr$ is the Weyl--Titchmarsh function of the corresponding generalized indefinite string $(\infty,\omega_\Sr,\dip_\Sr)$.  
 In particular, we see that $m$ is a Herglotz--Nevanlinna function and the Borel measure $\mu$ in the corresponding integral representation differs from the one for the generalized indefinite string only by a point mass at zero. 
 The measure $\mu$ is a spectral measure for a suitable self-adjoint realization $\T$ of the spectral problem~\eqref{eqnCHISP}; compare \cite{bebrwe08, LeftDefiniteSL, CHPencil}. 
 Since this establishes an immediate connection between the spectral properties of $\T$ and the corresponding generalized indefinite string $(\infty,\omega_\Sr,\dip_\Sr)$, we may now invoke Theorem~\ref{thmalpha}. 
 
 \begin{theorem}\label{thmApp}
  If the function $u-1$ belongs to $H^1[0,\infty)$ and the measure $\dip$ is finite, then the essential spectrum of $\T$ coincides with the interval $[1/4,\infty)$ and the absolutely continuous spectrum of $\T$ is essentially supported on $[1/4,\infty)$. 
 \end{theorem}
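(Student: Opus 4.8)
The plan is to reduce Theorem~\ref{thmApp} to an application of Theorem~\ref{thmalpha} via the Liouville-type transformation already set up in Lemma~\ref{lem:sol=sol} and Corollary~\ref{cor:psi=psi}. The essential observation is that the spectral problem~\eqref{eqnCHISP} on $[0,\infty)$ is unitarily/spectrally tied, through the diffeomorphism $\Sr(t)=\log(1+t)$ and the relation~\eqref{eqnmms}, to the generalized indefinite string $(\infty,\omega_\Sr,\dip_\Sr)$ whose coefficients are given explicitly by~\eqref{eqnDefa} and~\eqref{eqnDefbeta}. Since the measure $\mu$ of the spectral problem differs from the spectral measure $\mu_\Sr$ of the string only by a point mass at zero (because $m(z)=m_\Sr(z)-\tfrac{1}{2z}$), the essential spectrum and the essential support of the absolutely continuous part coincide for the two objects. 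Thus it suffices to verify that $(\infty,\omega_\Sr,\dip_\Sr)$ satisfies the hypotheses of Theorem~\ref{thmalpha} with the specific values $\alpha=\eta=1/4$, so that $\alpha/\eta=1$ and the essential spectrum becomes $[1/4,\infty)$ after the shift.

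First I would identify the correct constants. With $\alpha=\eta=1/4$ one has $\sqrt{\alpha}=1/2$, so the model weight in Theorem~\ref{thmalpha} reads $\Wr_\alpha(t)=\tfrac{\eta t}{1+2\sqrt{\alpha}t}=\tfrac{t}{4(1+t)}$, and $\alpha/\eta=1$. The task is then to check the integrability condition~\eqref{eqnCondSalpha}, namely
\begin{align*}
  \int_0^\infty \Bigl|\Wr_\Sr(t)-c-\frac{t}{4(1+t)}\Bigr|^2 t\,dt + \int_{[0,\infty)} t\,d\dip_\Sr(t) < \infty
\end{align*}
for a suitable real constant $c$. For the measure term, the substitution in~\eqref{eqnDefbeta} gives $\int_{[0,\infty)} t\,d\dip_\Sr(t)=\int_{[0,\infty)} (\E^x-1)\E^{-x}\,d\dip(x)\le\dip([0,\infty))<\infty$, using finiteness of $\dip$; this is immediate. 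The weight term is the substantive computation: inserting~\eqref{eqnDefa} and writing $u=1+(u-1)$, one finds that the leading behaviour of $\Wr_\Sr(t)$ as $t\to\infty$ is governed by the constant function $1$ appearing in $u$, which after the $\tfrac{1}{1+t}$ factor produces exactly the model term, while the remainder is controlled by $u-1\in H^1[0,\infty)$.

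The main obstacle will be the careful bookkeeping in the weight estimate: one must show that the difference $\Wr_\Sr(t)-c-\tfrac{t}{4(1+t)}$, after the change of variables $x=\Sr(t)=\log(1+t)$ (so $1+t=\E^x$ and $t\,dt=(\E^x-1)\E^x\,dx$), is square-integrable against the weight $t$. Since $u-1$ and $(u-1)'=u'$ lie in $L^2[0,\infty)$, and the transformation multiplies by the bounded-in-the-relevant-sense factor $\tfrac{1}{1+t}=\E^{-x}$ while the Jacobian contributes $\E^{x}$, the powers of $\E^x$ should balance so that the $H^1$-integrability of $u-1$ transfers to finiteness of the weighted integral; one also uses that $u$ is bounded (being $1$ plus an $H^1$ function on a half-line). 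I would fix $c$ by matching the constant term $u(0)$ in $\Wr_\Sr$ against the constants generated by the model, and then estimate the remaining pieces termwise. Once~\eqref{eqnCondSalpha} is confirmed, Theorem~\ref{thmalpha} yields that the essential spectrum of the string equals $[\alpha/\eta,\infty)=[1,\infty)$ and its absolutely continuous spectrum is essentially supported there; but one must be careful that the present normalization of $\Wr_\Sr$ leads to the spectral parameter shift producing $[1/4,\infty)$ rather than $[1,\infty)$, which is reconciled by the $+\tfrac14$ shift built into~\eqref{eqnCHISP} together with the relation~\eqref{eqnmms}. Transferring these conclusions back to $\T$ via the point-mass-only difference of spectral measures then completes the proof.
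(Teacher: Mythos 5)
Your reduction scheme is the paper's own: pass to the string $(\infty,\omega_\Sr,\dip_\Sr)$ via Lemma~\ref{lem:sol=sol}, use~\eqref{eqnmms} to identify the spectral data up to a point mass at zero, verify~\eqref{eqnCondSalpha} and invoke Theorem~\ref{thmalpha}; your treatment of the measure term is also correct. However, your choice of constants $\alpha=\eta=1/4$ is wrong, and the key verification of~\eqref{eqnCondSalpha} fails with it. Writing $u=1+v$ with $v\in H^1[0,\infty)$, the definition~\eqref{eqnDefa} gives
\begin{align*}
  \Wr_\Sr(t) = u(0)-1+\frac{t}{1+t}-\frac{v'(\Sr(t))+v(\Sr(t))}{1+t},
\end{align*}
so the model part of $\Wr_\Sr$ is exactly $\frac{t}{1+t}$; matching this against $\frac{\eta t}{1+2\sqrt{\alpha}t}$ forces $\eta=1$ and $2\sqrt{\alpha}=1$, hence $\alpha=1/4$, $c=u(0)-1$, and then the error term $-\frac{v'(\Sr(t))+v(\Sr(t))}{1+t}$ squared against $t\,dt$ is bounded by $\int_0^\infty|v'(x)+v(x)|^2dx<\infty$ after the substitution $x=\Sr(t)$, so Theorem~\ref{thmalpha} yields $[\alpha/\eta,\infty)=[1/4,\infty)$ directly. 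With your normalization $\eta=1/4$, the difference $\Wr_\Sr(t)-c-\frac{t}{4(1+t)}$ contains, even for the optimal choice $c=u(0)-1/4$, the residual term $-\frac{3}{4(1+t)}$, and $\int_0^\infty \frac{t}{(1+t)^2}\,dt$ diverges logarithmically; hence~\eqref{eqnCondSalpha} fails for \emph{every} real $c$, and Theorem~\ref{thmalpha} is simply not applicable as you set it up.

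The compensating ``spectral parameter shift'' you invoke to turn $[\alpha/\eta,\infty)=[1,\infty)$ into $[1/4,\infty)$ does not exist. In Lemma~\ref{lem:sol=sol} the spectral parameter $z$ is the \emph{same} on both sides of the transformation: the $+\frac{1}{4}g$ term in~\eqref{eqnCHISP} is absorbed by the factor $\sqrt{1+t}$ in~\eqref{eqnfg}, not by any translation of $z$, and~\eqref{eqnmms} relates $m(z)$ and $m_\Sr(z)$ at the same argument, the $-\frac{1}{2z}$ contributing only a point mass at zero to the spectral measure. Consequently the essential spectra of $\T$ and of the string $(\infty,\omega_\Sr,\dip_\Sr)$ coincide, so a correct application of Theorem~\ref{thmalpha} must already produce $[1/4,\infty)$ for the string itself; this forces $\alpha/\eta=1/4$ and confirms the constants $\alpha=1/4$, $\eta=1$, $c=u(0)-1$ used in the paper, while ruling out yours.
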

 
 \begin{proof} 
  Under these assumptions, we readily see that the coefficients of the corresponding generalized indefinite string $(\infty,\omega_\Sr,\dip_\Sr)$ satisfy 
 \begin{align*}
  \int_0^\infty \Bigl|\Wr_\Sr(t) + 1 -u(0) - \frac{t}{1+t} \Bigr|^2 t\, dt & \leq \int_0^\infty |u'(\Sr(t))+ u(\Sr(t))-1|^2 \frac{1}{1+t} dt  \\
     & = \int_0^\infty |u'(x)+ u(x)-1|^2 dx < \infty,
  \end{align*}
  upon performing a substitution, as well as 
  \begin{align*}
   \int_{[0,\infty)} t\, d\dip_\Sr(t) \leq \int_{[0,\infty)} d\dip\circ\Sr = \int_{[0,\infty)} d\dip <\infty. 
 \end{align*}
 Now the claim follows from Theorem~\ref{thmalpha} with $c=u(0)-1$, $\alpha=1/4$ and $\eta=1$. 
 \end{proof}

Of course, it is also desirable to consider the spectral problem for~\eqref{eqnCHISP} on the whole real line. 
In this case, one can show,  using a standard argument based on stability of the absolutely continuous spectrum under finite rank perturbations, that the essential spectrum coincides with the interval $[1/4,\infty)$ and the absolutely continuous spectrum is of multiplicity two and essentially supported on $[1/4,\infty)$ if $u$ is a real-valued function on $\R$ such that $u-1$ belongs to $H^1(\R)$ and $\dip$ is a non-negative finite Borel measure on $\R$;  see Theorem~\ref{thmCHac}.

\section{Schr\"odinger operators with  \texorpdfstring{$\delta'$}{delta-prime}-interactions}

Our main theorems also apply to Schr\"odinger operators with $\delta'$-interactions. 
To this end, let $\nu$ be a real-valued Borel measure on $[0,\infty)$ which is singular with respect to the Lebesgue measure. 
For the sake of simplicity, we shall also assume that $\nu$ does not have a point mass at zero. 
The Borel measure $\omega$ on $[0,\infty)$ is then defined as the sum of the measure $\nu$ and the Lebesgue measure, that is, 
\begin{align}\label{eq:omeganu}
\omega(B) = \nu(B) + \int_Bdx 
\end{align}
for every Borel set $B\subseteq [0,\infty)$. 
We consider the operator $H_\nu$ in the Hilbert space $L^2[0,\infty)$ associated with the differential expression
\begin{align}\label{eq:taunu}
\tau_\nu = -\frac{d}{dx}\frac{d}{d\omega(x)}
\end{align}
and subject to Neumann boundary conditions at zero. 
The operator $H_\nu$ can be viewed as a  {\em Hamiltonian with $\delta'$-interactions}. 
Namely, if $\nu$ is a discrete measure such that 
\begin{align}
\nu = \sum_{s\in X}\beta(s) \delta_s,
\end{align}
where $X$ is a discrete subset of $[0,\infty)$, $\beta$ is a real-valued function on $X$ and $\delta_s$ is the unit Dirac measure centred at $s$, then the differential expression $\tau_\nu$ can be formally written as (see \cite[Example 2.2]{dprime})
\begin{align}
 -\frac{d^2}{dx^2} + \sum_{s\in X}\beta(s)\langle \ledot,\delta_s'\rangle\delta_s',
\end{align}
which is the Hamiltonian with $\delta'$-interactions on $X$ of strength $\beta$ (see \cite{AGHH,km13,km14}).
It is known (see~\cite{dprime} and~\cite{MeasureSL}) that under the above assumption on $\nu$, the operator $H_\nu$ is self-adjoint in $L^2[0,\infty)$. 
The spectral properties of $H_\nu$ turn out to be closely connected with those of the generalized indefinite string $S_\nu = (\infty,\omega,0)$.

\begin{lemma}\label{lemUnitEqv}
The operator $H_\nu$ is unitarily equivalent to $S_\nu$.
\end{lemma}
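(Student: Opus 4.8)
The plan is to produce an explicit unitary operator conjugating the self-adjoint relation $\T$ associated with the generalized indefinite string $S_\nu=(\infty,\omega,0)$ into the Krein--Feller expression $\tau_\nu$. Since the measure $\dip$ attached to $S_\nu$ vanishes identically, the underlying Hilbert space $\cH$ collapses to its first factor $\dot{H}^1_0[0,\infty)$, and I claim that the map $U\colon\dot{H}^1_0[0,\infty)\to L^2[0,\infty)$ given by $Uf=f'$ is unitary. It is isometric directly from the definition of the scalar product in~\eqref{eq:normti}, since $\|f\|^2=\int_0^\infty|f'|^2\,dx=\|Uf\|_{L^2}^2$, and it is onto because every $p\in L^2[0,\infty)$ is the derivative of $x\mapsto\int_0^x p(t)\,dt$, a function which lies in $\dot{H}^1_0[0,\infty)$.

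Next I would compute the conjugated relation $U\T U^{-1}$. Recall that a pair $(f,g)$ belongs to $\T$ precisely when $-f''=\omega g$, the second equation in~\eqref{eqnDEre1} being vacuous as $\dip=0$. Writing $p=Uf=f'$, the distributional identity $-f''=\omega g$ reads $-p'=g\,d\omega$ as a measure, so that $p$ is of bounded variation with Radon--Nikodym derivative $\frac{dp}{d\omega}=-g$. In particular $\T$ has trivial multivalued part: taking $f\equiv0$ forces $g\,d\omega=0$, and since the absolutely continuous part of $\omega$ equals Lebesgue measure by~\eqref{eq:omeganu}, the continuous function $g$ must vanish identically. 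The image $q=Ug=g'$ therefore satisfies $q=-\frac{d}{dx}\frac{dp}{d\omega}=\tau_\nu p$, so that $U\T U^{-1}$ acts exactly as the differential expression $\tau_\nu$.

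It then remains to match the boundary behaviour at the origin. Every $g$ with $(f,g)\in\T$ lies in $\dot{H}^1_0[0,\infty)$ and hence satisfies $g(0)=0$; consequently the quasi-derivative of $p=Uf$ obeys $\frac{dp}{d\omega}(0)=-g(0)=0$, which is precisely the Neumann boundary condition incorporated into $H_\nu$. This yields the inclusion $U\T U^{-1}\subseteq H_\nu$. As $U\T U^{-1}$ is self-adjoint, being the image of the self-adjoint relation $\T$ under a unitary map, and $H_\nu$ is self-adjoint by assumption, the inclusion must in fact be an equality, and the unitary equivalence of $H_\nu$ and $S_\nu$ follows.

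The main point requiring care is the bookkeeping between the distributional, Lebesgue, and $\omega$-measure interpretations of the derivatives: one must justify rigorously that $-f''=\omega g$ becomes the measure identity $dp=-g\,d\omega$, and that the resulting left-continuous quasi-derivative $\frac{dp}{d\omega}$ takes the value $-g(0)$ at zero, so that the Neumann condition is genuinely recovered. Establishing that $U$ identifies the respective operator domains, rather than merely intertwining the formal expressions, is exactly where the self-adjointness of both $\T$ and $H_\nu$ does the essential work, letting one upgrade the inclusion to an equality without any further domain analysis.
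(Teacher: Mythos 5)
Your proof is correct, but it follows a genuinely different route from the paper's. The paper never constructs a unitary operator explicitly: it works at the level of Weyl--Titchmarsh functions, showing that the quasi-derivative $\psi_\nu^{\qd}(z,\redot)=d\psi_\nu(z,\redot)/d\omega$ of the Weyl solution of $\tau_\nu y=zy$ solves the string equation $-f''=z\,\omega f$ with $\psi_\nu^{\qd\prime}(z,0-)=-z\psi_\nu(z,0)$ and lies in $\dot H^1[0,\infty)$, whence $m=m_\nu$; equality of the $m$-functions forces equality of the spectral measures, and unitary equivalence then follows through the common model of multiplication by the independent variable in $\Lmu$ (via \cite[Theorem~5.8]{IndefiniteString} on the string side and the measure Sturm--Liouville theory of \cite{MeasureSL} on the other). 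Your argument instead exhibits the explicit intertwiner $U\colon f\mapsto f'$ from $\Hasto$ (with $L=\infty$) onto $L^2[0,\infty)$, which is in fact the Hilbert-space-level counterpart of the paper's solution-level transform: for a pair $(f,zf)\in\T$ one has $(f')^{\qd}=-zf$, i.e.\ your $U$ inverts, up to normalization, the map $\psi_\nu\mapsto\psi_\nu^{\qd}$ used in the paper. What each buys: your route is more self-contained, avoiding the spectral representation theorems and simplicity-of-spectrum input and producing a concrete unitary, while the paper's is shorter given the machinery already assembled and stays inside the $m$-function formalism used throughout. The bookkeeping you flag is handled correctly: in the definition of $\T$ the equation $-f_1''=\omega g_1$ is understood with the free boundary constant $f_1'(0-)$, so the left-continuous representative of $p=f'$ has $p(0)=f'(0-)$ unconstrained, and the Neumann condition for $H_\nu$ is correctly traced to $g(0)=0$ (built into $\Hasto$) rather than to $f'(0-)$ --- since $dp/d\omega=-g$ with $g$ continuous and $\omega(\lbrace0\rbrace)=0$, the value $-g(0)=0$ is indeed the right reading of the quasi-derivative at zero. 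Your maximality step (inclusion between a self-adjoint relation and a self-adjoint operator implies equality) is standard and sound; note it would even render your separate verification that $\mul{\T}$ is trivial dispensable, since equality with the operator $H_\nu$ forces this a posteriori.
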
 

\begin{proof}
Since $\tau_\nu$ is in the limit point case at $\infty$ (see \cite{dprime}), for every $z\in\C\backslash\R$ there is an (up to scalar multiples) unique non-trivial solution $\psi_\nu(z,\redot)$ to $\tau_\nu y = zy$ such that $\psi_\nu(z,\redot)$ lies in $L^2[0,\infty)$. 
Recall that the Weyl--Titchmarsh function $m_\nu$ of $H_\nu$ is then given by
\begin{align*}
m_\nu(z) = - \frac{\psi_\nu(z,0)}{\psi_\nu^\qd(z,0)},\quad z\in \C\backslash\R,
\end{align*}
where the function in the denominator is the quasi-derivative 
\begin{align*}
\psi_\nu^\qd(z,\redot) = \frac{d\psi_\nu(z,\redot)}{d\omega}.
\end{align*}
It is known that $m_\nu$ is a Herglotz--Nevanlinna function and the Borel measure in the corresponding integral representation is a spectral measure for the operator $H_\nu$. 
Now for every function $h\in H^1_\cc[0,\infty)$ we compute 
\begin{align*}
  \int_0^\infty \psi_\nu^{\qd\prime}(z,x)h'(x)dx & = z \psi_\nu(z,0) h(0) + z\int_0^\infty \psi_\nu^\qd(z,x) h(x) d\omega(x),
\end{align*}
where we used the fact that $\psi_\nu^{\qd\prime} = -z\psi_\nu$ as well as an integration by parts.
This shows that the quasi-derivative is a solution of the differential equation
  \begin{align*}
  -f'' = z\, \omega f
 \end{align*}
with $\psi_\nu^{\qd\prime}(z,0-)=-z\psi_\nu(z,0)$. 
Since it furthermore lies in $\dot{H}^1[0,\infty)$, the Weyl--Titchmarsh function $m$ of the generalized indefinite string $S_\nu$ is given by 
\begin{align*}
  m(z) = \frac{\psi_\nu^{\qd\prime}(z,0-)}{z\psi_\nu^\qd(z,0)} = \frac{-z\psi_\nu(z,0)}{z\psi_\nu^\qd(z,0)} = m_\nu(z), \quad z\in\C\backslash\R.
\end{align*}
Thus, the corresponding spectral measures coincide and hence the claim follows.
\end{proof}

The established connection now allows us to apply Theorem~\ref{thm0}. 

\begin{theorem}\label{th:ACprime}
Suppose that 
\begin{align}\label{eq:nu0}
\int_{0}^\infty |\Vr(x) - c|^2dx<\infty
\end{align}
for a real constant $c$, where $\Vr$ is the normalized anti-derivative of $\nu$. 
Then the essential spectrum of the Hamiltonian $H_\nu$ coincides with the interval $[0,\infty)$ and the absolutely continuous spectrum of $H_\nu$ is essentially supported on $[0,\infty)$.
\end{theorem}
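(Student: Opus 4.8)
The plan is to reduce Theorem~\ref{th:ACprime} to Theorem~\ref{thm0} by way of the unitary equivalence established in Lemma~\ref{lemUnitEqv}. Since $H_\nu$ is unitarily equivalent to the generalized indefinite string $S_\nu=(\infty,\omega,0)$, its spectral measure coincides with that of $S_\nu$, and hence it suffices to verify that $S_\nu$ falls within the scope of Theorem~\ref{thm0}. Thus the entire argument hinges on translating the hypothesis~\eqref{eq:nu0} on the normalized anti-derivative $\Vr$ of $\nu$ into the condition~\eqref{eqnCondS0} on the normalized anti-derivative $\Wr$ of $\omega$.

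First I would identify $\Wr$ explicitly. Since $\omega$ is the measure $\nu$ plus Lebesgue measure by~\eqref{eq:omeganu}, its normalized anti-derivative is the left-continuous distribution function $\Wr(x)=\int_{[0,x)}d\omega=\Vr(x)+x$, where $\Vr(x)=\int_{[0,x)}d\nu$ is the normalized anti-derivative of $\nu$. With this identification the quantity controlled by~\eqref{eqnCondS0} becomes
\begin{align*}
  \int_0^\infty |\Wr(x)-c-\eta x|^2\,dx = \int_0^\infty |\Vr(x)+x-c-\eta x|^2\,dx,
\end{align*}
and choosing $\eta=1$ this reduces to exactly $\int_0^\infty|\Vr(x)-c|^2\,dx$, which is finite by~\eqref{eq:nu0}. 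Since $\dip$ vanishes identically for $S_\nu$, the measure term in~\eqref{eqnCondS0} is zero and poses no difficulty. Hence $S_\nu$ satisfies the hypotheses of Theorem~\ref{thm0} with this constant $c$ and $\eta=1$.

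Applying Theorem~\ref{thm0} then yields that the essential spectrum of $S_\nu$ coincides with $[0,\infty)$ and its absolutely continuous spectrum is essentially supported on $[0,\infty)$. By Lemma~\ref{lemUnitEqv} the operator $H_\nu$ shares the same spectral measure, so both conclusions transfer verbatim to $H_\nu$, completing the proof. I do not anticipate a genuine obstacle here; the only point requiring mild care is the correct identification $\Wr=\Vr+\redot$ together with the verification that the positivity requirement $\eta>0$ is met, which holds trivially with $\eta=1$.
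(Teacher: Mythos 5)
Your proposal is correct and follows exactly the paper's route: the paper proves Theorem~\ref{th:ACprime} as ``a simple consequence of Lemma~\ref{lemUnitEqv} and Theorem~\ref{thm0}'', and your verification that $\Wr=\Vr+\redot$ (so that~\eqref{eqnCondS0} holds with $\eta=1$ and vanishing $\dip$) is precisely the omitted routine check. Nothing is missing.
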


\begin{proof}
This is a simple consequence of Lemma~\ref{lemUnitEqv} and Theorem~\ref{thm0}.
\end{proof}

\begin{remark}
In conclusion, let us mention that one can say more about spectral properties of the Hamiltonian $H_\nu$ and about its negative spectrum in particular. 
 Namely, it is possible to show that the negative spectrum consists of simple eigenvalues which may only accumulating at $-\infty$. 
 More specifically, they satisfy the following Lieb--Thirring-type bound
\begin{align}\label{eq:LTdelta}
\sum_{\lambda \in \sigma(H_\nu)\cap(-\infty,0)} \frac{1}{|\lambda|^{3/2}} \le \frac{3}{4}\int_{0}^\infty |\Vr(x) - \Vr_0|^2dx.
\end{align}
However, we postpone the discussion of Lieb--Thirring-type inequalities to a forthcoming publication.
\end{remark}

\bigskip
\noindent
\section*{Acknowledgments}
We gratefully acknowledge the kind hospitality at the State Key Laboratory of Scientific and Engineering Computing (LSEC), Academy of Mathematics and Systems Science, Chinese Academy of Sciences in Beijing, China during a stay in July 2018, where a part of this work was done.

\end{document}